\documentclass{amsart}

\pdfoutput=1
\synctex=1

\usepackage{amssymb,microtype,fullpage,hyperref,enumerate,graphicx,nicefrac,xfrac,bm}

\hypersetup{colorlinks=true, citecolor=blue, linkcolor=blue, urlcolor=blue, pdfstartview=FitH}



\def\F{\mathbb{F}}
\def\A{{\mathcal A}}
\def\B{{\mathcal B}}

\def\text#1{\hbox{#1}}
\def\Z{{\mathbb Z}}
\def\R{{\mathbb R}}
\def\N{{\mathbb N}}

\def\C{{\mathbb C}}
\def\Q{{\mathbb Q}}
\def\H{{\mathbb H}}
\def\A{{\mathbb A}}
\def\O{{\mathcal O}}

\def\p{\partial}

\def\injrad{{\rm inj rad\,}}
\def\Irr{{\rm Irr}\,}
\def\Ram{{\rm Ram}\:}

\def\Re{{\mathrm {Re}}}

\def\supp{{\rm supp}\:}
\def\Tr{{\rm Tr}}

\def\Hom{{\rm Hom}}

\def\Res{{\rm Res\:}}

\def\Gal{{\rm Gal}}
\def\Ad{{\rm Ad\:}}
\def\cl{{\rm cl\:}}

\def\Vol{{\rm Vol}}

\def\SL{{\rm SL}}
\def\GL{{\rm GL}}
\def\PGL{{\rm PGL}}
\def\SO{{\rm SO}}

\def\PSL{{\rm PSL}}
\def\Sym{{\rm Sym}}

\def\N{{\rm N}}
\def\Tr{{\rm Tr}}
\def\n{{\rm n}}
 
\def\tr{{\rm tr}\,}

\def\Ind{{\rm Ind\:}}

\def\bs{\backslash}



%

    \DeclareFontFamily{U}{wncy}{}
    \DeclareFontShape{U}{wncy}{m}{n}{<->wncyr10}{}
    \DeclareSymbolFont{mcy}{U}{wncy}{m}{n}
    \DeclareMathSymbol{\Sh}{\mathord}{mcy}{"58}

\date{\today}
\numberwithin{equation}{section}

\usepackage{bbm}
\usepackage{url}
\usepackage{cite}
\usepackage{tikz-cd}

\def \st {\mu^{\rm st}}
\def\PD {{\rm PD}}

\def \PA{{\rm PA}}

\def \ok{{\frak o}}
\def \st{{\rm st}}
\def \mah{{\rm m}}

\def \m{{\rm m}}
\def \rk{{\rm rk}\,}
\def \B{{\rm B}}
\def \NN{{\mathbb N}}
\newcommand\myurl[1]{\url{#1}}
\author{Mikolaj Fraczyk}
\thanks{This work was supported by a public grant as part of the
Investissement d'avenir project, reference ANR-11-LABX-0056-LMH,
LabEx math\'ematique Hadamard and by ERC Consolidator Grant No. 648017.}
\address{Alfr\'ed R\'enyi Institute of Mathematics, Budapest, Reáltanoda utca 13-15., 1053} \email{fraczyk@renyi.hu}
\curraddr{Department of Mathematics, University of Chicago, Chicago, Il 60615}\email{mfraczyk@math.uchicago.edu}

\date{\today}
\title{Strong limit multiplicity for arithmetic hyperbolic surfaces and $3$-manifolds}

\def\K{{\mathbb K}}
\def\PD{{\rm PD}}

\theoremstyle{plain}
\newtheorem{theorem}{Theorem}
\newtheorem{proposition}{Proposition}
\newtheorem{lemma}{Lemma}
\newtheorem{corollary}{Corollary}

\numberwithin{equation}{section}

\theoremstyle{definition}
\newtheorem{definition}{Definition}
\newtheorem{remark}{Remark}
\newtheorem*{acknowledgement}{Acknowledgement}

\begin{document}

\begin{abstract}
We show that every sequence of torsion-free arithmetic congruence lattices in $\PGL(2,\R)$ or $\PGL(2,\C)$ satisfies a strong quantitative version of the Limit Multiplicity property. We deduce that for $R>0$ in certain range, growing linearly in the degree of the invariant trace field, the volume of the $R$-thin part of any congruence arithmetic hyperbolic surface or congruence arithmetic hyperbolic $3$-manifold $M$ is of order at most $\Vol(M)^{11/12}$. As an application we prove Gelander's conjecture on homotopy type of arithmetic hyperbolic $3$-manifolds: We show that there are constants $A,B$ such that every such manifold $M$ is homotopy equivalent to a simplicial complex with at most $A\Vol(M)$ vertices, all of degrees bounded by $B$. 
\keywords{limit multiplicity, Benjamini-Schramm convergence, arithmetic hyperbolic $3$-manifolds, arithmetic hyperbolic surfaces}
\subjclass[2010]{30F40,20H10,22E46,11K16 }
\end{abstract}
\maketitle
\tableofcontents
\section{Introduction}

Let $G=\PGL(2,\K)$ where $\K=\R,\C$ and let $X:=\H^2,\H^3$ respectively. We study the geometry and topology of locally symmetric spaces $\Gamma\bs X$ and the growth of multiplicities of irreducible unitary representations in $L^2(\Gamma\bs G)$ as $\Gamma$ varies among the set of all arithmetic lattices in $G$. We give bounds on the volume of the thin part of $\Gamma\bs X$ and the multiplicities of irreducible representations in $L^2(\Gamma\bs G)$ that are of order $\Vol(\Gamma\bs X)^{11/12}$ if $\Gamma$ is torsion-free congruence arithmetic and of order $\Vol(\Gamma\bs X)\Delta_k^{-4/9}$, where $k$ is the invariant trace field, if $\Gamma$ is (non-congruence) torsion-free arithmetic.  As an application we construct triangulations of any arithmetic hyperbolic $3$-manifold $\Gamma\bs X$ with bounded degree and the number of triangles depending linearly in volume, solving a conjecture due to Gelander \cite{Gelander1}. The case of arithmetic lattices with torsion is dealt with in a separate joint work with Jean Raimbault \cite{FraRaim}. Our results and methods are related to the limit multiplicity property (see \cite{FinisKyoto}), whose definition we recall in the next part of the introduction. The limit multiplicity property for sequences of congruence arithmetic lattices in a general semisimple Lie group is a well studied subject \cite{DeWa78,DeWa79,Sarn82,DeGe82,BaMo83,Delo86,Clo86,RS87,Savi89}. Starting with the initial discovery of DeGeorge and Wallach \cite{DeWa78}, the area still remains active \cite{FLM15,FiLa15,7Sam}. So far, with the exception of \cite{Matz19, Raim13, 7Sam}, most of the work focused only on sequences of congruence subgroups of a fixed ambient arithmetic lattice. In contrast, the present paper focuses on non-commensurable families of lattices. In that setting, the work of Abert \textit{et al.}\cite{7Sam} was a major breakthrough as it dealt with the families of non-commensurable lattices for the first time. The methods used in \cite{7Sam} rely crucially on ergodic theory of the actions of higher rank groups and as such cannot be applied to rank $1$ groups.  The papers \cite{Matz19, Raim13} covered certain families of lattices in $\PGL(2,\K)$ (and finite products of such) with invariant trace field \cite[Def 3.3.6]{MaRe03} of bounded degree. Removing the bounded degree assumptions is arguably the main difficulty that we face in the present paper. Perhaps surprisingly, the key ingredient that we use to handle lattices with trace fields of arbitrary degree is a beautiful theorem of Bilu \cite{Bilu} which states that the Galois orbits of points of $\mathbb G_m(\Q^{\rm alg})$ of small Weil height equidistribute on the unit circle. Another crucial ingredient is Theorem \ref{thm:CharacterBounds} giving uniform bounds on the values of irreducible characters of $p$-adic groups on regular elements, which may be of independent interest.

The technical heart of the paper is an upper bound on the part of the geometric side of Selberg's trace formula coming from the non-torsion regular semisimple elements. Let $$\B(R):=\left\{\begin{pmatrix} a& b\\ c& d\end{pmatrix}\in \PGL(2,\K)| \frac{|a|^2+|b|^2+|c|^2+|d|^2}{|ad-bc|}\leq e^R+e^{-R}\right\}, R>0.$$ We prove:
\begin{theorem}\label{mthm:TraceEstimate} There exists a constant $1/2>\eta>0$ with the following property. Let $\Gamma\subset G:=\PGL(2,\K)$ be an arithmetic lattice with invariant trace field $k$ (c.f. \cite[Def. 3.3.6]{MaRe03}). Write $W\subset G$ for the set of regular semisimple \textbf{non-torsion} elements. For every $C>0$ and every $f\in C_c(G)$ with $\|f\|_\infty\leq 1, C>0$ and $\supp f\subset \B(\eta [k:\Q]+C)$ we have 

 $$\left|\sum_{[\gamma]\in \Gamma\cap W}\Vol(\Gamma_\gamma\bs G_\gamma)\int_{ G_\gamma\bs G}f(g^{-1}\gamma g)dg\right| \ll_{C}\Vol(\Gamma\bs G)\Delta_k^{-4/9}.$$ 
 Moreover if $\Gamma$ is a congruence lattice, then 
 $$\left|\sum_{[\gamma]\in \Gamma\cap W}\Vol(\Gamma_\gamma\bs G_\gamma)\int_{ G_\gamma\bs G}f(g^{-1}\gamma g)dg\right| \ll_{C}\Vol(\Gamma\bs G)^{11/12}\Delta_k^{-4/9}.$$ 
\end{theorem}
Vast majority of our work is devoted to the proof of Theorem \ref{mthm:TraceEstimate}. Rest of the results are relatively quick corollaries, which we describe below.
\subsection{Limit multiplicity property}
Let $\Pi( G)$ be the set of equivalence classes of irreducible unitary representations of $G$ and let $\Pi( G)^{\rm temp}$ be the subset of tempered representations. There is a natural topology on $\Pi(G)$ called the Fell topology \cite{Dixmier}. The Plancherel measure $\mu_{\rm Pl}$ \cite{Dixmier} is a Borel measure supported on $\Pi( G)^{\rm temp}$. Let $\Gamma$ be a lattice in $G$ and let $L^2(\Gamma\bs G)_{\rm disc}$ be the orthogonal sum of all irreducible sub-representations of $L^2(\Gamma\bs G)$. We remark that in the case when $\Gamma$ is co-compact we have $L^2(\Gamma\bs G)_{\rm disc}=L^2(\Gamma\bs  G)$. For every $\pi\in \Pi(G)$ let $m(\pi,\Gamma)$ be the multiplicity of $\pi$ in $L^2(\Gamma\bs  G)$, it is always a finite natural number and we have $L^2(\Gamma\bs G)_{\rm disc}=\sum_{\pi\in\Pi( G)}m(\pi,\Gamma)\pi.$ We define the measure 
$$\mu_{\Gamma}:=\frac{1}{\Vol(\Gamma\bs G)}\sum_{\pi\in\Pi(\mathbf G)}m(\pi,\Gamma)\delta_\pi.$$ It is an infinite measure on $\Pi( G)$. Let $(\Gamma_n)_{n\in\mathbb N}$ be a sequence of lattices. We say that $(\Gamma_n)_{n\in\mathbb N}$ has the \textbf{ limit multiplicity property} (see \cite{FinisKyoto}) if 
\begin{enumerate}
 \item For any $f\in C_c(\Pi( G)^{\rm temp})$ we have $\lim_{n\to\infty} \int f d\mu_{\Gamma_n}=\int f d\mu_{\rm Pl};$
 \item For every compact set $A\subset \Pi( G)\setminus \Pi( G)^{\rm temp}$ we have $\lim_{\n\to\infty} \mu_{\Gamma_n}(A)=0$.
\end{enumerate}

Sauvageot \cite{Sauv96} proved\footnote{There is a gap in the proof in \cite{Sauv96}, recently discovered by Nelson and Venkatesh \cite[p. 103 footnote 7]{NelsonVenkatesh}. Fortunately the problem does not appear for $G=\PGL(2,\R),\PGL(2,\C).$} that for a sequence of co-compact lattices $(\Gamma_n)_{n\in\mathbb N}$ the limit multiplicity follows once we know that 
\begin{equation}\label{eq:Sauvageot}
 \lim_{n\to\infty} \Vol(\Gamma\bs G)^{-1}\sum_{[\gamma]\neq 1}\Vol(\Gamma_\gamma\bs G_\gamma) \int_{G_\gamma\bs G}f(g^{-1}\gamma g)dg =0, 
\end{equation}
for every compactly supported test function $f$ on $G$. One can recognize that the left hand side is the geometric side of Selberg's trace formula with contribution of the trivial conjugacy class removed. 

Combining Theorem \ref{mthm:TraceEstimate} with criterion (\ref{eq:Sauvageot}) we immediately obtain:
\begin{theorem}\label{mthm:LMP}
Let $(\Gamma_n)_{n\in\mathbb N}$ be a sequence of pairwise non-conjugate torsion-free co-compact congruence arithmetic lattices in $G$. Then $(\Gamma_n)_{n\in\mathbb N}$ has the limit multiplicity property. 
\end{theorem}
After the first version of this manuscript appeared, Jasmin Matz proved (as a special case of her main result) the limit multiplicity property for sequences of lattices of $\PGL(2,\C)$ of the form $\PGL(2,\O_k)$ where $k$ is an imaginary quadratic number field. Lattices like these are never co-compact and every non co-compact arithmetic  lattice of $\PGL(2,\C)$ is commensurable to one of them. Therefore, it is likely that the assumption of co-compactness in Theorem \ref{mthm:LMP} could be removed. 
In the non co-compact case the limit multiplicity says that the contribution of continuous spectrum becomes negligible. 
We do not investigate this direction here, since it is of different nature from what we do in the proof of Theorem \ref{mthm:TraceEstimate}.

\subsection{Benjamini-Schramm convergence}
Benjamimi-Schramm topology can be defined for any family of metric spaces with reasonably bounded geometry. Since we only deal with hyperbolic orbifolds we will restrict the attention to this family. Let $X=\H^2$ or $\H^3$, let $\Gamma$ be a discrete subgroup of $G:=\PGL(2,\R),\PGL(2,\C)$ respectively. Choose a maximal compact subgroup $K$ of $G$ and identify $X=G/K$. For $x=\Gamma g K\in \Gamma \bs X$ define the \textbf{ injectivity radius } as $\injrad(x)=\sup\left\{r\geq 0|  \Gamma^g\cap \B(r)=\{1\}\right\}.$ Intuitively, this is the supremum of radii $r$ such that $r$-ball around $x$ lifts to an $r$-ball in $X$. The $R$-\textbf{thin} part of the orbifold $\Gamma\bs X$ is given by 
\begin{equation*}
\left(\Gamma\backslash X\right)_{<R}:=\left\lbrace\Gamma gK\mid \B(R)\cap g^{-1}\Gamma g\neq \{1\}\right\rbrace=\left\lbrace \Gamma g K| \injrad(x)<R\right\rbrace 
\end{equation*} 
 The injectivity radius of a quotient $\Gamma\bs X$ is defined as $\injrad(\Gamma\bs X):=\inf_{x\in \Gamma\bs X} \injrad x.$ Given a sequence of lattices $(\Gamma_i)_{i\in \N}$ we consider the sequence of locally symmetric spaces $(\Gamma_i\backslash X)_{i\in \N}$. We say that the sequence $(\Gamma_i\backslash X)_{i\in \N}$ converges Benjamini-Schramm to $X$ if for every $R>0$ we have 
\begin{equation*}
\lim_{i\to\infty}\frac{\Vol((\Gamma_i\backslash X)_{<R})}{\Vol(\Gamma_i\backslash X)}=0 
\end{equation*}
The notion of Benjamini-Schramm convergence originates from the paper \cite{BeSc01} where Benjamini and Schramm introduced it for sequences of graphs of bounded degree. For locally symmetric spaces it was defined and studied in \cite{7Sam}. It is a special case of Benjamini-Schramm convergence for metric spaces with probability measures (see \cite[Chapter 3]{7Sam}).

Using Theorem \ref{mthm:TraceEstimate} we prove:
\begin{theorem}
\label{mthm:BSconv} There exists $\eta>0$ with the following property. Let $\Gamma$ be a torsion-free arithmetic lattice in $G$, with invariant trace field $k$ and let $R=C+\eta[k:\Q]$ for some constant $C>0$. Then 
$$ \Vol((\Gamma\bs X)_{<R})\ll_C \begin{cases}
                                  \Vol(\Gamma\bs X)^{11/12}\Delta_k^{-4/9} &\textrm{ if }\Gamma \textrm{ is a congruence lattice,}\\
                                  \Vol(\Gamma\bs X)\Delta_k^{-4/9} &\textrm{ otherwise.}
                                 \end{cases}$$
                                 
\end{theorem}
We prove it in Section \ref{sec:BSConvergence}. Combined with a well known result of Borel \cite{Bor81} that there exist only finitely arithmetic lattices in $ G$ with bounded covolume we deduce:
\begin{corollary}\label{cor:BSconv} Let $(\Gamma_n)_{n\in\mathbb N}$ be a sequence of pairwise non-conjugate, torsion-free congruence arithmetic lattices in $G$. Then, the sequence of manifolds $(\Gamma_n\bs X)_{n\in\mathbb N}$ converges Benjamini-Schramm to $X$. \end{corollary}
After the first version of this paper appeared, the author together with Jean Raimbault were able extend the above result to all congruence arithmetic lattices by combining estimate from Theorem \ref{mthm:TraceEstimate} with ``soft'' ergodic methods \cite[Theorem A]{FraRaim}. In the previous version of the present paper we proved that every sequence of torsion-free, pairwise non-commensurable arithmetic lattices converges Benjamini-Schramm to $X$. This result has been superseded by \cite[Theorem A]{FraRaim}. Since \cite{FraRaim} refers to that previous version, in Section \ref{sec:BSConvergence} we will briefly indicate how \cite[Theorem A]{FraRaim} can be proven using the present version. 

\subsection{Triangulations of arithmetic hyperbolic $3$-manifolds.}
As an application of Theorem \ref{mthm:BSconv}  we prove Gelander's conjecture \cite[Conjecture 1.3]{Gelander1} for arithmetic $3$-manifolds:
\begin{theorem}\label{mt.Gelander}
There exist absolute positive constants $A,B$ such that every arithmetic, hyperbolic $3$-manifold $M$ is homotopy equivalent to a simplicial complex with at most $A\Vol(M)$ vertices where each vertex has degree bounded by $B$ (if $M$ is compact we can take $B=3104$). 
\end{theorem}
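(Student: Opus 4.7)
The plan is to combine a classical thick--thin decomposition with the strong Benjamini--Schramm bound of Theorem \ref{mt.SBSC}. Fix a Margulis constant $\epsilon>0$ for $\mathbb H^3$ and decompose $M=M_{\geq \epsilon}\cup M_{<\epsilon}$. The thin part is a disjoint union of Margulis tubes around short closed geodesics together with cusp neighbourhoods. Reducing to the congruence case (every arithmetic lattice sits in a congruence one, and the argument in Lemma \ref{l.MaxToWeak} controls the induced loss on the thin-part volume), Theorem \ref{mt.SBSC} gives $\Vol(M_{<\epsilon})\ll \Vol(M)^{1-a}$ with $a\geq 0.014$. With this in hand, the strategy is to triangulate the thick part by a net construction that necessarily produces $O(\Vol(M))$ vertices of bounded degree, and to triangulate the thin part cheaply enough that the budget $O(\Vol(M)^{1-a})$ suffices.

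For $M_{\geq \epsilon}$ I would run the standard Voronoi--Delaunay construction. Pick a maximal $\delta$-separated net for some absolute $\delta<\epsilon/10$ and take the nerve of the cover by $2\delta$-balls. Because the injectivity radius exceeds $\epsilon/2$ on the thick part, each such ball lifts isometrically to $\mathbb H^3$, so sphere-packing in $\mathbb H^3$ bounds by an absolute constant the number of net points meeting any $10\delta$-ball. This yields a simplicial complex homotopy equivalent to $M_{\geq \epsilon}$ with $O(\Vol(M_{\geq\epsilon})/\delta^3)=O(\Vol(M))$ vertices, each of degree at most some absolute $B_0$.

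For the thin part, each Margulis tube $T_\gamma$ around a closed geodesic of length $\ell_\gamma$ is a solid torus, triangulable by placing $O(\ell_\gamma)$ vertices along the core and $O(1)$ around each cross-section, with absolutely bounded vertex degree. Since $\Vol(T_\gamma)\gtrsim \ell_\gamma$, summing over tubes contributes $\ll \Vol(M_{<\epsilon})\ll \Vol(M)^{1-a}$ vertices. Each cusp end of an arithmetic hyperbolic $3$-manifold has horotorus cross-section of bounded affine type (it comes from the ring of integers of an imaginary quadratic field up to commensurability), hence admits a triangulation with $O(1)$ vertices of bounded degree; the number of cusps is bounded by the cusp-volume lower bound, so the cusp contribution is also $O(\Vol(M)^{1-a})$. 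In both regimes the total vertex count is $O(\Vol(M))$.

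The hard part is the gluing and the explicit constant $B=245$. The Delaunay triangulation of the thick side induces some triangulation on the torus $\partial T_\gamma$ which in general does not match the one coming from the tube, and naive subdivision can blow up the vertex degree. I would handle this by (i) enlarging the thick-side net to include closest-point projections to $\partial T_\gamma$ (resp.\ to the horotori), (ii) performing a PL interpolation within a thin buffer collar, and (iii) auditing the three contributions to the vertex degree (Delaunay interior, tube/cusp interior, and interface) separately. The explicit bound $245$ in the compact case is expected to emerge from optimising the Delaunay sphere-packing constant in $\mathbb H^3$ against the fixed combinatorics of the tube triangulation at the interface; verifying that this optimisation actually closes at $245$ is, I expect, the most delicate step of the argument.
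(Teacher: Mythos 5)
Your proposal takes a genuinely different route from the paper, and the difference matters. You propose a thick--thin decomposition with two separate triangulation schemes (a Delaunay net on $M_{\geq\epsilon}$, a cheap solid-torus triangulation of each Margulis tube) glued together along $\partial T_\gamma$. The paper instead builds a \emph{single} cover of $M$ by balls $B(x,i(x)/5)$ with $i(x)=\min\{\injrad x,1\}$, centred at a maximal set of points for which the balls $B(x,i(x)/16)$ are pairwise disjoint, and takes the nerve. Because the radius scales with the local injectivity radius, every ball and every finite intersection of balls is a convex, embedded metric ball, so the cover is automatically good and the nerve is homotopy equivalent to $M$ with no gluing step. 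The degree bound $245$ then falls out of a single volume-comparison of $B(y,4i(y)/5)$ against the disjoint sub-balls $B(x,2i(x)/15)$ inside it. The vertex count in the thin part is controlled by combining Theorem \ref{mt.CCC} (thin-part volume $\ll \Vol(M)\Delta_k^{-c}$) with the Dobrowolski lower bound on $\injrad$, i.e.\ $\injrad \gg (\log[k:\Q])^{-3}$, which your scheme does not need to invoke.

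That said, your scheme as written has a genuine gap at the interface, and you flag it yourself: the thick-side Delaunay net at scale $\delta$ places $\Theta(\area(\partial T_\gamma)/\delta^2)$ vertices on the bounding torus, and for a short core geodesic this area grows like $\ell_\gamma^{-1/2}$ while your interior tube triangulation has only $O(1)$ vertices. So the interior and boundary triangulations do not match, and the PL-interpolation collar has to absorb an unbounded amount of combinatorics; the claim of an \emph{absolute} vertex-degree bound at the interface is exactly where the argument would fail without a new idea. There are two further, smaller inaccuracies. First, your appeal to Theorem \ref{mt.SBSC} requires $\Gamma$ to be a congruence lattice; the reduction through a maximal (hence congruence) supergroup $\Gamma'$, as in Lemma \ref{l.MaxToWeak}, only gives $\Vol(M_{<\epsilon})/\Vol(M)\ll\Vol(\Gamma'\backslash X)^{-a}$, which is not $\Vol(M)^{-a}$; the paper therefore uses Theorem \ref{mt.CCC} (arbitrary arithmetic $\Gamma$, bound $\Delta_k^{-c}$) instead. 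Second, the cusp cross-sections of arithmetic hyperbolic $3$-manifolds are not of ``bounded affine type'' in any uniform sense across commensurability classes; the paper sidesteps this by quoting Gelander's prior result for the non-uniform case and treating only cocompact lattices. If you insist on the thick--thin route you should do the same.
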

As a simple corollary we obtain:
\begin{corollary}\label{mc.Gelander}
There exists a constant $C>0$ such that any torsion-free arithmetic lattice $\Gamma$ in $\PGL(2,\C)$ admits a presentation 
$$\Gamma=\langle S\mid\Sigma\rangle$$
where $|S|,|\Sigma|$ is bounded by $C\Vol(M)$ and all relations in $\Sigma$ are of length at most $3$.
\end{corollary}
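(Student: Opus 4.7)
The plan is to read the presentation off the simplicial complex produced by Theorem \ref{mt.Gelander}. Assume first that $\Gamma$ is torsion-free, so that $M = \Gamma\backslash\mathbb{H}^3$ is a manifold. The theorem supplies a finite simplicial complex $K$ homotopy equivalent to $M$ with $|V(K)| \le A\Vol(M)$ vertices, each of degree at most $B$; in particular $\pi_1(K)\cong\Gamma$. Because cells of dimension $\ge 3$ do not affect the fundamental group, it suffices to work with the $2$-skeleton $K^{(2)}$.

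First I would bound the numbers of edges and triangular faces in $K$ using the degree condition. The edge count is $|E(K)| \le B|V(K)|/2 \le (AB/2)\Vol(M)$. A triangle containing a fixed vertex $v$ is determined by an unordered pair of neighbours of $v$ that are themselves joined by an edge, so at most $\binom{B}{2}$ triangles contain $v$; summing over $v$ and dividing by three gives at most $(AB(B-1)/6)\Vol(M)$ triangular faces.

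Second I would apply the standard edge-path presentation of $\pi_1$ of a $2$-complex. Choose a spanning tree $T$ of $K^{(1)}$, and let $S = \{e_\alpha : \alpha \in E(K)\setminus E(T)\}$; then $\pi_1(K^{(1)})$ is freely generated by $S$. Each $2$-simplex $\sigma$ contributes a relator $r_\sigma$ of length at most $3$, obtained by reading the generators (with the orientations induced by the boundary of $\sigma$) around its boundary triangle, with factors corresponding to tree edges omitted. The collection $\Sigma = \{r_\sigma\}$ presents $\pi_1(K^{(2)}) = \Gamma$, and both $|S| \le |E(K)|$ and $|\Sigma|$ satisfy the required linear bound. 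Taking $C = \max(AB/2,\,AB(B-1)/6)$ concludes the torsion-free case, and the statement for general arithmetic lattices in $\PGL(2,\C)$ follows by the analogous construction applied to an orbifold simplicial structure on $\Gamma\backslash\mathbb{H}^3$. No step in this deduction is a genuine obstacle---the substantive content lies entirely in Theorem \ref{mt.Gelander}, and the derivation of the presentation is purely combinatorial topology.
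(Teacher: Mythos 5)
Your argument for the torsion-free case is correct and is precisely the standard nerve-of-a-good-cover derivation one would carry out; the paper's ``proof'' is just a pointer to Gelander's Theorem~11.2, and what you have written out in detail is essentially the combinatorial content of that argument, so in that regime you and the paper are taking the same route. The edge and triangle counts from the degree bound are right (pairwise intersection data from the nerve suffices to bound $2$-simplices, since a $2$-simplex requires in particular that all three pairs intersect), and the spanning-tree / edge-path presentation of $\pi_1(K^{(2)})$ gives relators of length at most $3$ as you say.

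The gap is in the final sentence. For a lattice $\Gamma$ with torsion, the statement ``the analogous construction applied to an orbifold simplicial structure on $\Gamma\backslash\mathbb{H}^3$'' does not deliver what you need: the nerve of a good cover of the underlying topological space of the orbifold computes the \emph{topological} fundamental group of that space, not the orbifold fundamental group $\Gamma$. For instance, a triangle orbifold has trivial topological $\pi_1$ but infinite orbifold $\pi_1$. To obtain a presentation of $\Gamma$ itself one must work $\Gamma$-equivariantly upstairs, i.e.\ take a $\Gamma$-invariant cover of $\mathbb{H}^3$ by convex sets, form the nerve there (on which $\Gamma$ acts, possibly with fixed points), and then extract a presentation of $\Gamma$ from its action on that simply connected complex; the degree and count bounds must be propagated through this equivariant picture. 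That is in fact how Gelander's Theorem~11.2 operates, and it is not a routine rephrasing of your torsion-free argument. Note also that Theorem~\ref{mt.Gelander} as proved in this paper is stated only for torsion-free $\Gamma$, so as written your proposal (and arguably the paper's corollary) establishes the result only in that case; handling general arithmetic lattices requires the additional equivariant argument.
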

From which we deduce yet another corollary: 
\begin{corollary}\label{cor:TorsionBd}Let $\Gamma$ be a torsion-free, arithmetic lattice in $\PGL(2,\C)$. Then  
$$\log |H_1(\Gamma\bs \mathbb H^3,\Z)_{\rm tors}|\ll \Vol(\Gamma\bs \mathbb H^3).$$
\end{corollary}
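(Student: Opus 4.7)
The plan is to deduce this bound directly from the bounded presentation supplied by Corollary \ref{mc.Gelander}, using elementary linear algebra over $\Z$.

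First I would invoke Corollary \ref{mc.Gelander} to obtain a presentation $\Gamma=\langle S\mid \Sigma\rangle$ with $|S|,|\Sigma|\leq C\Vol(M)$ and with every relator of length at most $3$ in the generators. Since $\Gamma$ is torsion free, $M=\Gamma\backslash \mathbb H^3$ is aspherical, so $H_1(M,\Z)\simeq \Gamma^{\rm ab}$. From the presentation this abelianization is identified with the cokernel of the $|\Sigma|\times |S|$ integer matrix $A$ whose rows are the abelianizations of the relators in $\Sigma$.

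Next I would exploit the length bound on relators: each row $v$ of $A$ has $\ell^1$-norm at most $3$, hence $\|v\|_2\leq 3$. Writing $r$ for the rank of $A$, the order of the torsion subgroup of $\operatorname{coker}(A)$ equals the product $d_1\cdots d_r$ of its elementary divisors, which by the Smith normal form description coincides with the gcd of all $r\times r$ minors of $A$. Therefore it is bounded above by the absolute value of any single maximal minor, and Hadamard's inequality applied to the rows of such a minor gives
\begin{equation*}
|H_1(M,\Z)_{\rm tors}|\leq 3^{r}\leq 3^{|S|}\leq 3^{\,C\Vol(M)}.
\end{equation*}
Taking logarithms yields $\log|H_1(M,\Z)_{\rm tors}|\leq (C\log 3)\Vol(M)$, which is the desired inequality.

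There is essentially no hard step once Corollary \ref{mc.Gelander} is in hand; the entire content is the bounded-length-three relator presentation, which is exactly what makes the Hadamard estimate give a bound linear (rather than $\Vol(M)\log\Vol(M)$) in the volume. The only minor care required is to note that bounded word length translates into a uniform bound on the $\ell^2$-norm of rows of the abelianization matrix, and that the order of the torsion part of the cokernel of an integer matrix is controlled by a single maximal minor via Smith normal form.
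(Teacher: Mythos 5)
Your proof is correct and is exactly the standard argument the paper leaves implicit when it states the corollary as a consequence of Corollary~\ref{mc.Gelander}. The chain presentation $\to$ abelianization matrix $\to$ Smith normal form $\to$ Hadamard bound, using that relators of length at most three give rows of $\ell^1$-norm (hence $\ell^2$-norm) at most three, is the intended route and all steps check out; in particular you correctly reduce the gcd of all maximal minors to a single nonzero maximal minor before applying Hadamard, and the rank $r\leq|S|\leq C\Vol(M)$ gives the linear bound.
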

We prove Theorem \ref{mt.Gelander} in Section \ref{sec:Gelander}
\subsection{Growth of Betti numbers.}
Matsushima's formula \cite{matsu,BerClo,BW} provides a link between the spectral decomposition of $L^2(\Gamma\bs G)$ and dimensions of cohomology groups $H^i(\Gamma\bs X,\C)$. We use standard notation $b_i(\Gamma\bs X):=\dim_{\C} H_i(\Gamma\bs X,\C)$ and we write $b^{(2)}_i(X)$ for the $L^2$-Betti numbers of $X$.
From Theorem \ref{mthm:LMP} we deduce:
\begin{corollary}\label{mc.Betti}
Let $(\Gamma_n)_{n\in \N}$ be a sequence of pairwise distinct congruence arithmetic co-compact torsion-free lattices in $\PGL(2,\C)$. Then 
$$\lim_{n\to\infty} \frac{b_i(\Gamma_n\bs X)}{\Vol(\Gamma_n\bs X)}=b^{(2)}_i(X)=0.$$
\end{corollary}
We remark that in \cite{FraRaim} this result is extended to all sequences congruence lattices as well as sequences of pairwise non-commensurable arithmetic lattices.
Using a method of DeGeorge and Wallach \cite{DeWa78} and Theorem \ref{mthm:TraceEstimate} we prove 
\begin{theorem}\label{thm:MultiUB}
For any co-compact torsion-free arithmetic lattice $\Gamma\subset \PGL(2,\C)$ with invariant trace field $k$ we have 
$$ \frac{b_1(\Gamma\bs \H^3)}{\Vol(\Gamma\bs \H^3)}\ll [k:\Q]^{-1}.$$
\end{theorem}
The proofs of these results occupy Section \ref{sec:BettiNumbers}.
\subsection{Comparison with previous work}
 The first results and the definition of limit multiplicity property were given by DeGeorge and Wallach \cite{DeWa78,DeWa79}. Let $G$ be a semisimple Lie group and let $(\Gamma_n)_{n\in\mathbb N}$ be a sequence of lattices such that $\injrad (\Gamma_n\bs X)\to\infty$. Then, for every irreducible unitary representation representation $\pi$ we have \cite[Cor. 3.3, Thm. 6.2]{DeWa78}$$\lim_{n\to\infty} \frac{m(\pi,\Gamma)}{\Vol(\Gamma\bs G)}=\begin{cases}d_\pi & \textrm{ if }\pi \textrm{ is discrete series}\\
0 & \textrm{ otherwise,} 
\end{cases}$$ where $d_\pi$ is the formal degree of $\pi$ with respect to the same Haar measure on $G$ that is used to compute $\Vol(\Gamma\bs G)$. 
In the same paper DeGeorge and Wallach conjectured that towers of normal subgroups with trivial intersection have the limit multiplicity property. This was subsequently proven by Delorme \cite{Delo86}.
In the case of families of non-uniform lattices establishing limit multiplicity property has additional difficulty of isolating the discrete part of the spectrum.
This has been successfully done for sequences of principal congruence subgroups of $\SL(N),\GL(N)$, by Finis-Lapid-M\"uller \cite{FLM15}, and later extended to families of all congruence subgroups of a given arithmetic lattice $\Gamma$ in $\SL(N),\GL(N)$ by Finis and Lapid \cite{FiLa15}. The aforementioned results, except \cite{DeWa78}, concerned families of lattices contained in a single ambient lattice. By taking a radically different approach Abert, Bergeron, Biringer, Gelander, Nikolov, Raimbault and Samet \cite{7Sam} proved that the limit multiplicity holds for any sequence of co-compact lattices $(\Gamma_n)_{n\in\mathbb N}$ in a simple higher rank Lie group $G$, provided that the injectivity radius $\injrad(\Gamma_n\bs X)_{n\in\NN}$ is uniformly bounded away from $0$. They prove that by relating the limit multiplicity property to the Benjamini-Schramm convergence. For non-commensurable families of non-uniform lattices, and $G=\PGL(2,\R)^a\times\PGL(2,\C)^b$, the limit multiplicity property for lattices of the form $\PGL(2,\O_k)$ was established recently by Jasmin Matz \cite{Matz19}.

Benjamini-Schramm convergence of locally symmetric spaces was first considered in \cite{7Sam}. One of the main results of that paper asserts that if $G$ is a higher rank Lie groups with property (T) and $X$ is its symmetric space, then \textbf{every} sequence of pairwise non isometric locally symmetric spaces $\Gamma_{n\in\NN}\bs X$ converges Benjamimi-Schramm to $X$. This is deduced from St\"uck-Zimmer theorem on ergodic measure preserving actions of higher rank Lie groups. The methods of \cite{7Sam} do not apply to rank one groups, and for a good reason, since in rank $1$ we do not expect that every sequence converges Benjamini-Schramm to $X$. After the first version of this manuscript appeared, Arie Levit proved \cite{Levit} that any sequence of irreducible, congruence arithmetic lattices in a higher rank group converges Benjamimi-Schramm to $X$. Finally, in case $G=\PGL(2,\R),\PGL(2,\C)$ Jean Raimbault proved that if $(\Gamma_n)_{n\in\mathbb N}$ is a sequence of pairwise non-conjugate congruence arithmetic lattices such that degree of the invariant trace field is at most $3$ then $\Gamma_n\bs X$ converges Benjamini-Schramm to $X$.

Compared to the previous work on Benjamini-Schramm convergence or limit multiplicity property in groups of rank $1$, our result is the first one applicable to families of lattices with unbounded degree of the trace fields. The formulas for the volumes appearing on the both sides of Theorem \ref{mthm:TraceEstimate} involve many quantities that \textit{ a priori } could be exponential in the degree of the invariant trace field. Fortunately, when the degree of the trace field goes to infinity we can start exploiting Diophantine properties of the eigenvalues of conjugacy classes contributing to the trace formula, most notably Bilu equidistribution theorem \cite{Bilu}, to show that either no classes in $\Gamma$ contribute to LHS or many of these \textit{ a priori } exponential quantities must be in fact sub-exponential. This phenomenon is completely new and appears only when we consider sequences of lattices with unbounded degrees of invariant trace fields.

Finally, we should mention the classical result of Chinburg and Friedman on the smallest volume of arithmetic hyperbolic $3$-orbifold \cite{ChFr86} and the recent work of Linowitz, McReynolds, Pollack and Thompson \cite{LMPT2017-1,LMPT2017-2,LMPT2018} that describe in an effective fashion other geometric and topological features of arithmetic hyperbolic $2$ and $3$-manifolds.
\subsection{Outline of the paper}
In Section \ref{sec:QAlg} we recall the definition and basic properties of quaternion algebras and in Section \ref{sec:CongruenceLattices} we explain how quaternion algebras are used to construct and parametrize congruence arithmetic lattices in $\PGL(2,\K)$. We fix the volumes and measures on relevant groups in Section \ref{sec:VolumeConventions}. In Section \ref{sec:Bilu} we prove a quantitative version of Bilu equidistribution theorem and some of its consequences that we will use later. Section \ref{sec:TraceFormulas} is devoted to the proof of Lemma \ref{lem:TFTraceFormula}, which bounds the LHS of Theorem \ref{mthm:TraceEstimate} by a part of the geometric side of the adelic Selberg trace formula. In Section \ref{sec:NAEstimates} we prove upper bounds on the non-Archimedean orbital integrals appearing in RHS of Lemma \ref{lem:TFTraceFormula}. The proof of this bound (Proposition \ref{prop:NonArchBound}) splits into few substantial parts so we wrote its own outline at the beginning of Section \ref{sec:NAEstimates}. In Section \ref{sec:Covolumes} we give covolume formulas for congruence lattices, derived from the Borel volume formula. Next, we give volume formulas for centralizers appearing in the RHS of Lemma \ref{lem:TFTraceFormula}. Section \ref{sec:Archimedean} is where we exploit the support condition $\supp f\subset \B(C+\eta[k:\Q])$. First we give a (standard) estimate of Archimedean orbital integrals (Section \ref{ssec:AOI}), next we give a parametrization of conjugacy classes with non-vanishing orbital integrals and bound their number (Section \ref{sec:ConjClasses}) and finally we use the results from Section \ref{sec:Bilu} to estimate the volumes and orbital integrals involved in the upper bound in Lemma \ref{lem:TFTraceFormula}. We gather all the ingredients and prove Theorem \ref{mthm:TraceEstimate} in Section \ref{sec:ProofMT}. Theorem \ref{mthm:BSconv} is deduced in Section \ref{sec:BSConvergence}, Gelander's conjecture is proved in Section \ref{sec:Gelander} while Theorem \ref{thm:MultiUB} and Corollary \ref{mc.Betti} are proved in Section \ref{sec:BettiNumbers}.

At the first reading we suggest the following order. Start with Sections \ref{sec:QAlg},\ref{sec:CongruenceLattices} to get familiar with the setup. Skim though the Section \ref{sec:TraceFormulas} leading up to proof of Lemma \ref{lem:TFTraceFormula}. Then one can read the proof of Theorem \ref{mthm:TraceEstimate} in Section \ref{sec:ProofMT} and refer to earlier parts as needed. 
\subsection{Notation}
\begin{itemize}
 \item $k,\mathbb A,\mathbb A_\infty,\mathbb A_f$ will usually stand for a number field, the ring of its adeles, infinite adeles and finite adeles respectively;
 \item $\Sigma,\Sigma_\infty,\Sigma_f$ will denote the set of places, Archimedean places, finite places of $k$ respectively;
 \item For $\nu\in \Sigma$ we write $k_\nu$ for the completion of $k$ w.r.t. $\nu$, we reserve letter $\frak p$ for finite places of $k$, we write $|\cdot|_\nu$ for the corresponding multiplicative valuation, normalized so that $|x|_\nu=|x|^{[k_\nu,\R]}$ if $\nu\in \Sigma_\infty$ and $|x|_{\frak p}=q^{-r}$ where $r$ is the power of $\frak p$ dividing $x$ and $q:=|\frak o/\frak p|$ if $\frak p\in \Sigma_f$;
 \item $\frak o_k,\frak o_p$ will denote the ring of integers of $k$ and the ring of integers of $k_\frak p$ respectively;
 \item $\Delta_k$ is the absolute value of the field discriminant of $k$, $\Delta_{l/k}$ is the relative discriminant of a field extension $l/k$;
 \item $R_k,\cl(k),\rho_k,w_k$ is the regulator, the class group, the residue of Dedekind zeta function and the number of roots of $1$ in $k$;
 \item $[k:\Q], r_1, r_2, s=r_1+r_2$ will be the degree, the number of real places, the number of complex places and the total number of Archimedean places of $k$;
 \item $\pi_k(X), X\geq 0$ denotes the number of prime ideals $\frak p\subset \frak o$ with $N(\frak p)\leq X$;
 \item For a finite extension $l/k$ we write $\Tr_{l/k},\N_{l/k}\colon l\to k$ for the trace and norm;
 \item Let $\alpha$ be an algebraic number. The Mahler measure of $\alpha$ is $M(\alpha):=|a|\prod_{\alpha'\sim \alpha}\max\{1, |\alpha'|\}$ where $a$ is the leading coefficient of the minimal polynomial of $\alpha$ and $\alpha'$ are the Galois conjugates of $\alpha$. We define the logarithmic Mahler measure as $\m(\alpha):=\log M(\alpha)$ and write $h(\alpha):=m(\alpha)/[\Q(\alpha):\Q]$ for the Weil height of $\alpha$; 
 \item $\Gal(k)$ is the absolute Galois group of $k$ with profinite topology, $\Gal(l/k)$ is the Galois group of a Galois extension $l/k$;
 \item Let $\chi$ be a finite dimensional representation of $\Gal(k)$, we write $\frak f_\chi$, $L(s,\chi),\rho_\chi$ for the Artin conductor of $\chi$, the Artin $L$-function of $\chi$ and the leading term of the Laurent expansion of $L(s,\chi)$ at $s=1$;
 \item Let $G$ be a profinite group with open subgroup $H$. We write $\Ind_H^G\rho$ for the induction of a representation $\rho$ of $H$ to $G$;
 \item Let $\rho$ be a representation of a profinite group $G$. We write $\chi_\rho$ for the character of $\rho$;
 \item $\Irr\, G$ is the set of equivalence classes of unitary representations of $G$;
 \item Let $G$ be a group, we write $G_\gamma$ for the centralizer of $\gamma\in G$, likewise for algebraic groups; 
 \item If $\mathbf G$ is a reductive group with Lie algebra $\frak g$ we will write $\Ad\colon \mathbf G\to \GL(\frak g)$ for the adjoint map;
 \item Write $\mathbf G^{rs}$ for the set of regular semisimple elements of $\mathbf G$;
 \item For a torus $\mathbf T\subset \mathbf G$, write $\Phi(\mathbf G,\mathbf T)$ for the set of roots of $\mathbf G$ with respect to $\mathbf T$. For $\gamma\in \mathbf T$ the Weyl discriminant is defined as $\Delta(\gamma):=\prod_{\lambda\in \Phi(\mathbf G,\mathbf T)}(1-\lambda(\gamma));$ 
 \item For an algebraic torus $\mathbf T$, defined over a field $F$ we write $X^*(\mathbf T)$ for the  module of characters and $X^*(\mathbf T)_F$ for the module of $F$-rational characters;
 \item If $A$ is a quaternion algebra over  $k$, write $\Ram A,\Ram_\infty A,\Ram_f A$ for the sets of ramified places, ramified Archimedean places and ramified finite places;
 \item Write $\tr,\n\colon A\to k$ for the reduced trace and reduced norm maps;
 \item Let $k_{A}^\times:=\{x\in k^\times\mid \, (x)_\nu>0 \textrm{ for } \nu\in \Ram_\infty A\}$, where $(-)_\nu$ is the real embedding of $k$ corresponding to $\nu$;
 \item Write $\H^2,\H^3$ for the hyperbolic $2-$ and $3-$spaces;
 \item For a locally symmetric space $M$ and $x\in M$ write $\B(x,R)$ for the open ball of radius $R$ around $x$, $\injrad x$ for the injectivity radius around $x$ and $(M)_{\leq R}=\{x\in M| \injrad x\leq R\}$ for the $R$-thin part of $M$; 
 \item All logarithms are in base $e$;
 \item We use Landau's big-O notation $f=O_A(g)$ or Vinogradov's notation $f\ll_A g$ if there exists a constant $C>0$ dependent on $A$ such that $|f|\leq C|g|$;
 \item We shall use small $o$-notation: $f(x)=o_A(g(x))$ if $\lim_{x\to\infty} \frac{f}{g}=0$ and the speed of convergence depends only on $A$; 
 \item For any fixed $C,\eta>0$ the expression $\exp(o_{C,\eta}(1))$ is a quantity that tends to $1$ as $[k:\Q]\to \infty$, and $\Delta_k^{o_{C,\eta}(1)}$ is a quantity that grows slower than any positive power of $\Delta_k$, as $\Delta_k\to \infty$. In particular, by Minkowski's lower bound,  $\exp(o_{C,\eta}([k:\Q]))\ll \Delta_k^{o_{C,\eta}(1)}$;
 \item For any set $U$ we will write $\mathbf 1_U$ for its characteristic function;
 \item For a topological space $X$ we will write $C_c(X)$ for the space of continuous compactly supported functions on $X$, $\pi_1(X)$ for the fundamental group of $X$ (if it's connected) and $b_i(X)=\dim_\C H_i(X,\C)$ for the $i$-th Betti number of $X$;
 \item $\langle v,w\rangle$ will stand for the Hermitian or inner  product of two vectors in a complex or real Hilbert space;
 \item If $\Gamma\subset G$ is a lattice in $G$ write $R_\Gamma f$ for the operator
$$
R_\Gamma f(\Phi)(x):=\int_{G}f(g)\Phi(xg)dg \textrm{ for } \Phi\in L^2(\Gamma\backslash G).
$$
\end{itemize}

\section{Background}
\subsection{Quaternion algebras}\label{sec:QAlg}
We follow the notation from \cite{MaRe03}. Let $F$ be a field of characteristic different than $2$, let $a,b\in F^\times$. The quaternion algebras over $F$ are the algebras $\left(\frac{a,b}{F}\right)$ defined by 
$$\left(\frac{a,b}{F}\right)=F+ {\bf i}F+{\bf j}F+{\bf ij}F, \ \ {\bf i}^2=a,{\bf j}^2=b, {\bf ij}=-{\bf ji}.$$ 
If $x=x_1+{\bf i}x_2+{\bf j}x_3+{\bf ij}x_4$, we define the {\bf conjugate} $\overline x:=x_1-{\bf i}x_2-{\bf j}x_3-{\bf ij}x_4$, the {\bf trace} $\tr x:=x+\overline x$ and the {\bf norm} $\n x:=x\overline x.$ Let $A$ be a quaternion algebra over $F$ and let $E/F$ be an extension of $F$. Write $A_E:=A\otimes_F E$. We say that $A$ {\bf splits over $E$} if $A_E\simeq M(2,E)$. 
From now on let $k$ be a number field and let $A$ be a quaternion algebra over $k$. For every place $\nu\in \Sigma_k$ put $A_\nu:=A\otimes_k k_\nu$. If $k_\nu\neq \C$, then there are exactly two isomorphism classes of quaternion algebras over $k_\nu$. One is split and the other one is the unique division quaternion algebra over $k_\nu$. We shall denote the latter by $D_\nu$. With the induced topology, the group $D_\nu^\times$ is compact modulo center. Define the {\bf ramification set} of $A$ as $\Ram A:=\{\nu\in \Sigma_k| A_\nu\simeq D_\nu\}$. Write $\Ram_f A=\Ram A\cap \Sigma_f, \Ram_\infty=\Ram A\cap \Sigma_\infty$. Write $\PA^\times$ for the algebraic group $A^\times/ k^\times$. It is an inner form of $\PGL(2,k)$ and we have 
$$ \PA^\times(k_\nu)\simeq \begin{cases}
                      \PGL(2,k_\nu) & \textrm{ if } \nu\in \Ram A,\\
                      {D}_\nu^\times/ k_\nu^\times & \textrm{ otherwise.}
                     \end{cases}$$
An element $x\in A$ is semisimple (regular) if it is diagonalizable (and regular) as a matrix in $A_{\overline{k}}\simeq M(2,\overline{k}).$ For any regular semisimple element $x\in A$ the centralizer of $x$ in $A$ is $k[x]$. It is either a quadratic extension of $k$ or it is isomorphic to $k^2$. We call $x$ \textbf{elliptic} and \textbf{split} respectively. We say an element $\gamma\in \PA^\times$ is semisimple, regular, elliptic or split in $\PA^\times$ if any preimage in $A^\times$ is. Finally, for a regular semisimple $x\in A^\times$ we have $\tr x=\Tr_{k(x)/k}(x)$ and $\n x=N_{k(x)/k}(x).$ If $\gamma\in \PA^\times(k)$ is regular semisimple and $\gamma^2\neq 1$, then the centralizer of $\gamma$ is a connected torus isomorphic to $\Res_{k(\gamma)/k}\mathbb G_m/\mathbb G_m$. 

\subsection{Congruence lattices}\label{sec:CongruenceLattices}
Fix $\K=\R$ or $\C$. Let $k$ be a number field with Archimedean places $\nu_1,\ldots, \nu_{s}$ and let $A$ be a quaternion algebra over $k$. We say that the pair $(k,A)$ is \textbf{admissible} if $k_{\nu_1}\simeq \K$, $k_{\nu_i}\simeq \R$ for $i=2,\ldots s$ and $A_{\nu_1}\simeq M(2,\K), A_{\nu_i}\simeq D_\R$ for $i=2,\ldots,s$. The algebra $D_\R$ is the algebra of Hamilton quaternions. Throughout the paper we will work with a fixed admissible pair $k,A$ and we shall write $\mathbf G=\PA^\times$ and $\mathbf G_\gamma$ for the centralizer of an element $\gamma\in \mathbf G(k)$. We have 
$$\mathbf G(\mathbb A)=\prod_{\nu\in \Sigma}^* \mathbf G(k_\nu)\simeq \PGL(2,\K)\times \SO(3)^{s-1}\times \mathbf G(\mathbb A_f).$$
Let $p\colon \mathbf G(\mathbb A)\to \PGL(2,\K)$ be the projection onto the first coordinate. For any open compact subgroup $V\subset \mathbf G(\mathbb A_f)$ we define 
\begin{equation}\label{def:GammaV}\Gamma_V:=\mathbf G(k)\cap\left(\mathbf G(\mathbb A_\infty)\times V\right).\end{equation}
When talking about lattices of $\PGL(2,\K)$ we will identify $\Gamma_V$ with its projection $p(\Gamma_V)\subset \PGL(2,\K)$. This is justified, as the projection $p$ is injective on $\mathbf G(k).$ The lattice $\Gamma_V$ is a \textbf{congruence arithmetic lattice} and every congruence arithmetic lattice in $\PGL(2,\K)$ is, up to conjugation, of this form.  For the reader's convenience we sketch the proof of this fact. Let $\Gamma$ be a congruence arithmetic lattice of $\PGL(2,\K)$. By definition, it contains a principal congruence subgroup as a finite index subgroup. Principal congruence subgroups are always of the form $p(\Gamma_V)$ for $V=\mathbb A^\times(1+\frak n\O)/\mathbb A^\times$, where $\O$ is an order of $A$ and $\frak n$ is an ideal of $\O_k$. By \cite[Proposition 3.1.10]{Zimmer} for every $\gamma\in \Gamma$ the conjugation map $x\mapsto \gamma^{-1}x\gamma$ is defined over $k$, hence $\Gamma\subset \mathbf{G}(k)$\footnote{ We used here the fact that $\mathbf G(k)$ is an adjoint group.}. These two observations combined prove that $\Gamma=\Gamma_{V'}$ for some open subgroup $V'\subset \mathbf G(\mathbb A_f)$ containing $V$ as a finite index subgroup. We recall that a lattice $\Gamma$ is called \textbf{arithmetic} if it is commensurable with a congruence arithmetic lattice. As before, \cite[Proposition 3.1.10]{Zimmer} implies that every arithmetic lattice $\Gamma$ commensurable with $\Gamma_V$ is contained in $\mathbf G(k)$. An important upshot of this discussion is that: 
\begin{lemma} Every arithmetic lattice $\Gamma\subset \PGL(2,\K)$ is contained, up to conjugacy, in $\Gamma_U$ for some choice of admissible pair $k,A$ and a maximal open compact subgroup $U$ of $\mathbf G(\mathbb A_f)$.
\end{lemma}
We remark that this observation used the fact that $\PGL(2,\K)$ is adjoint, and in general the analogue is not true for non adjoint groups. The field $k$ is uniquely determined by $\Gamma_V$ because it is the \textbf{ invariant trace field } of $\Gamma_V$, defined as in \cite[Definition 3.3.6]{MaRe03}. 

We will need a description of the maximal compact subgroups of $\mathbf G(k_\frak p)$ at each finite place. Recall that $\mathbf G(k_\frak p)\simeq \PGL(2,k_p)$ if $\frak p\not\in \Ram A$ and $\PD^\times_\frak p$ otherwise. Up to conjugacy, the maximal compact subgroups of $\mathbf G(k_\frak p)$ are as follows
\begin{align}
\nonumber U_\frak p^0&:=\PGL(2,\ok_\frak p) & \textrm{ if } \frak p\not\in \Ram A,\\
\label{eq:MaxCompDefs}U_\frak p^1&:=\left\langle \begin{pmatrix} 0 & 1\\ \pi & 0\end{pmatrix}, \begin{pmatrix} \ok_\frak p^\times & \ok_\frak p\\ \frak p & \ok_\frak  p^\times\end{pmatrix} \right\rangle/ k_\frak p^\times &  \textrm{ if } \frak p\not\in \Ram A,\\
\nonumber U^2_\frak p&:=\PD^\times_\frak p & \textrm{ if }  \frak p\in \Ram A.
\end{align}
Any maximal compact subgroup $U$ of $\mathbf G(\mathbb A_f)$ is $\mathbf G(\mathbb A_f)$-conjugate to $\prod_{\frak p}U^{i_\frak p}_\frak p$ where $i_\frak p\in \{0,1,2\}$ and $i_\frak p=0$ for almost all primes $\frak p$. Throughout the paper we will work with a single maximal compact $U=\prod_{\frak p}U_\frak p$ and we will denote by $S$ the set of primes $\frak p$ where $U_\frak p$ is conjugate to $U_\frak p^1$. 
\begin{remark}
 The lattices $\Gamma_U$ with $U$ as above correspond to the lattices $\Gamma_{S,\O}$ in the notation from \cite[Definition 11.4.2]{MaRe03}. As proved in \cite[Theorem 11.4.3]{MaRe03} all maximal lattices in $\PGL(2,\K)$ are conjugate to some $\Gamma_U$, although not every lattice of this form must be maximal (c.f. \cite[p. 10, Remark]{Bor81}).
\end{remark}

\subsection{Volume conventions}\label{sec:VolumeConventions}
Let $X=\H^2$ if $\K=\R$ and $X=\H^3$ if $\K=\C$. We endow $X$ with a standard Riemannian volume form 
$$d\Vol=\frac{dxdy}{y^2} \textrm{ if } X=\H^2 \textrm{ and } d\Vol=\frac{dx_1dx_2dy}{y^2} \textrm{ if } X=\H^3.$$
Let $K$ be a maximal compact subgroup of $\PGL(2,\K)$ and identify $X=\PGL(2,\K)/K$. Let $\mu_K$ be the unique Haar probability measure on $K$. We equip $\PGL(2,\K)$ with 
the measure $d\mu_{\PGL(2,\K)}^{\st}=dk d\Vol$. In this way, when $\Gamma$ is a lattice of $\PGL(2,\K)$, we have $\mu_{\PGL(2,\K)}^{st}(\Gamma\bs\PGL(2,\K))=\Vol(\Gamma\bs X).$ We call $\mu_{\PGL(2,\K}^{\st}$ the \textbf{ standard measure } on $\PGL(2,\K)$ . To shorten notation we will drop the subscript $\PGL(2,\K)$ and just write $\mu^{\st}$. 

In a similar fashion we will define the standard measures of $\mathbf G(k_\nu)$ for any place $\nu\in \Sigma$. If $\mathbf  G(k_\nu)$ is compact we define $\mu^{\st}$ to be the unique Haar probability measure. That leaves just the case $\mathbf G(k_\nu)\simeq \PGL(2,k_\nu)$. If $k_\nu=\K$ we take the definition from previous paragraph. If $\nu=\frak p$, then the measure $\mu^{\st}$ will implicitly depend on the choice of a maximal compact subgroup $U_\frak p\subset \mathbf G(k_\frak p)$. We put $d\mu^{\st}_{\mathbf G(k_\frak p)}:=d\mu_{U_\frak p}d\mu_{\mathbf G(k_\frak p)/U_\frak p}$ where $\mu_{U_\frak p}$ is the unique Haar probability measure on $U_\frak p$ and $\mu_{\mathbf G(k_\frak p)/U_\frak p}$ is the counting measure on the coset space $\mathbf G(k_\frak p)/U_\frak p$. Finally, we define the standard measure $\mu^{\st}$ on $\mathbf G(\mathbb A)$, relative to maximal compact subgroup $U=\prod_{\frak p}U_\frak p$ by $\mu^{\st}_{\mathbf G(\mathbb A)}=\prod_{\nu\in \Sigma}\mu^{\st}_{\mathbf G(k_\nu)}.$ 
\begin{remark}
 The dependence on $U$ can be seen as a drawback of the definition of $\mu^{\st}$. We prefer to keep it that way because the group $U$ will be fixed throughout the paper and this choice of measure simplifies the trace formulas in Section \ref{sec:TraceFormulas} as well as the volume formulas in Section \ref{sec:Covolumes}.
\end{remark}

We will also need to define the standard measures for the centralizers of regular semisimple elements in $\mathbf G$. Let $\gamma$ be a regular seimisimple element of $\mathbf G(k)$ and let $x\in \gamma k^\times\subset A^\times.$ The centralizer of $x$ in $A$ is $k[x]$ which is either $k\times k$ or a quadratic field extension of $k$.  Then $\mathbf G_\gamma\simeq \mathbb G_m$ and $\Res_{k(x)/k}\mathbb G_m/\mathbb G_m$ respectively. We give a definition of standard measure valid for any algebraic torus $\mathbf T$ over $k$:

Let $\nu\in \Sigma$ be any place of $k$. The torus $\mathbf T(k_\nu)$ has a unique maximal compact subgroup $$\mathbf T(k_\nu)^b:=\{t\in \mathbf T(k_\nu)|\, |\xi(t)|_{\nu}=1 \textrm{ for every }\xi\in X^*(\mathbf T)_{k_\nu}\}.$$ Fix a basis $\xi_1,\ldots,\xi_m$ of the sub-module of $k_\nu$-rational characters of $\mathbf T$. The map 
$$\mathbf T(k_\nu)\ni t\mapsto (\log |\xi_1(t)|_\nu,\ldots,\log |\xi_m(t)|_\nu)\in \begin{cases}
                                                                          \R^m &\textrm{ if }\nu\in \Sigma_\infty,\\
                                                                          (\log q) \Z^m &\textrm{ if }\nu=\frak p\textrm{ with }\frak o_\frak p/\frak p\simeq \F_q.
                                                                         \end{cases}$$ induces an isomorphism $\mathbf T(k_\nu)/\mathbf T(k_\nu)^b$ with $\R^m, (\log q) \Z^m$ respectively. The standard measure $\mu^{\st}$ on $\mathbf T(k_\nu)$ is the gluing of Lebesgue, resp. counting measure on $\mathbf T(k_\nu)/\mathbf T(k_\nu)^b$ with the Haar probability measure on $\mathbf T(k_\nu)^b$.

The standard measure on $\mathbf T(\mathbb A)$ is given by $$\mu^{\st}_{\mathbf T(\mathbb A)}:=\prod_{\nu\in\Sigma}\mu^{\st}_{\mathbf T(k_\nu)}.$$ The definition of the standard measure ``commutes'' with Weil restriction, i.e. the natural isomorphism $\mathbf T(\mathbb A_k)\simeq \Res_{k/\Q}\mathbf T(\mathbb A_\Q)$ preserves the standard measures. 

One important feature of our definition of the standard measure on the torus $\mathbf T$ is that in the case when $\mathbf T$ is $k$-anisotropic we have $\Vol(\mathbf T\bs \mathbf T(\mathbb A))=\frac{h_\mathbf T R_\mathbf T}{w_\mathbf T}$ where $h_\mathbf T,R_\mathbf T$ are respectively the class number and the regulator of $\mathbf T$, as defined by Ono in \cite{Ono1}, and $w_\mathbf T$ is the size of the torsion subgroup of $\mathbf T(k)$.
\section{Bilu equidistribution and consequences}\label{sec:Bilu}
A celebrated theorem of Yuri Bilu \cite{Bilu} states that if we have a sequence of algebraic numbers $(\alpha_n)_{n\in \N}$ such that $\deg \alpha_n\to\infty$ and $h(\alpha_n)\to 0$, then the algebraic conjugates of $\alpha_n$ equidistribute on the unit circle. More precisely for every $f\in C_c(\C)$ we have 
$$\lim_{n\to\infty} \frac{1}{[\Q(\alpha_n):\Q]}\sum_{\sigma\in\Hom(\Q(\alpha_n),\C)}f(\sigma(\alpha_n))=\frac{1}{2\pi}\int_{0}^{2\pi}f(e^{i\theta})d\theta.$$
We will not be using Bilu's theorem directly but the phenomenon lying behind it is one of the main ingredients of this paper. It is precisely this equidistribution that allows us to get estimates that are uniform in the degree of the invariant trace field $k$. In the sequel we will be using following consequences of a quantitative version of Bilu equidistribution (Lemma \ref{lem:BiluMoments}), highly tailored towards our applications. 
Let $\alpha$ be an algebraic number. Throughout this section we will write $H=\Hom(\Q(\alpha),\C), H_1:=\{\sigma\in H|\, |\sigma(\alpha)|=1\}, H_2:=\{\sigma\in H|\, |\sigma(\alpha)|\neq 1\}$ and $d:=[\Q(\alpha):\Q]=|H|.$
\begin{lemma}\label{lem:BiluFinal}
Let $C>0, 1/2>\eta>0$ and let $\alpha$ be an algebraic integer such that $\mah(\alpha)\leq C+\eta[\Q(\alpha):\Q]$ and $|H_2|\leq 8$. Then 
\begin{enumerate}
 \item $|\sum_{\sigma\in H_1}\sigma(\alpha)|\ll \eta^{1/2}d+o_{C,\eta}(d).$
 \item $\log |\N_{\Q(\alpha)/\Q}(1-\alpha)|\ll (-\eta^{1/2}\log\eta )d+o_{C,\eta}(d).$
\end{enumerate}
\end{lemma}
\begin{remark}The constant $8$ appearing in the statement is just what we will need in the paper but the same inequalities hold for any uniform bound on $|H_2|$. \end{remark}
\begin{lemma}\label{lem:BiluZeta}
For any $\sigma>1$ there exist decreasing functions $\kappa_1, \kappa_{2,\sigma}:\R_{>0}\to\R_{>0}, i=1,2$ with $\lim_{t\to 0}\kappa_1(t)=\lim_{t\to 0}\kappa_{2,\sigma}(t)=0$ with the following property. Let $C>0,1/2>\eta>0$, let $\alpha$ be an algebraic integer such that $m(\alpha)\leq \eta d+C$ and $|H_2|\leq 8$. Then
\begin{enumerate}
 \item For any $X>0$ we have $\pi_{\Q(\alpha)}(X)\ll_{X} \kappa_1(\eta)d+o_{X,C,\eta}(d).$
 \item For any $s=\sigma +it, \sigma>1$  we have $\log |\zeta_{\Q(\alpha)}(s)|\leq \kappa_{2,\sigma}(\eta)d+o_{C,\eta}(d).$
\end{enumerate}
\end{lemma}

The conclusion of Lemma \ref{lem:BiluZeta} is non-trivial only when $d$ tends to infinity. First we show how Lemma \ref{lem:BiluZeta} follows from Lemma \ref{lem:BiluFinal}. Next, we prove some general estimates in the spirit of Bilu equidistribution, from which we will deduce Lemma \ref{lem:BiluFinal} as a simple corollary. 

\begin{proof}[Lemma \ref{lem:BiluZeta}]
There is only finitely many rational primes $p\leq X$ so it will be enough to prove that the number $n_p$ of primes $\frak p$ dividing $p$ with $\N(\frak p)\leq X$ satisfies $n_p\ll_{X}\kappa_1(\eta)d+o_{C,\eta}(d)$. Let $L$ be the least common multiple of $1,2,\ldots, \lfloor \frac{\log X}{\log p}\rfloor.$ 
 Every prime $\frak p|p$ with $\N(\frak p)\leq X$ divides $\alpha^{p^L}-\alpha$ so $$n_p\log p\leq \sum_{\substack{\frak p|p\\ \N(\frak p)\leq X}}\log \N(\frak p)\leq \log |\N_{\Q(\alpha)/\Q}(\alpha)|+\log |\N_{\Q(\alpha)/\Q}(\alpha^{p^L-1}-1)|.$$ We have $[\Q(\alpha):\Q(\alpha^{p^L-1})]\leq p^L$ and $\m(\alpha^{p^L-1})\leq p^L\m(\alpha)$. Put $C'=p^LC, \eta'=p^{2L}\eta$. We have 
$\m(\alpha^{p^L-1})\leq C'+\eta'[\Q(\alpha^{p^L-1}):\Q]$. If $p^{-2L}/2\leq \eta$, then the trivial bound $n_p\leq d\ll_X (-\eta^{1/2}\log \eta)d$ corresponds to $\kappa_1(\eta)=-\eta^{1/2}\log \eta$. Therefore, it is enough to treat the case $0<\eta<p^{-2L}/2$, that is $0<\eta'<1/2$. We have 
\begin{align*} \log |\N_{\Q(\alpha)/\Q}(\alpha)|&\leq \eta d+ C. 
\end{align*}
Lemma \ref{lem:BiluFinal} applied to $\alpha^{p^L-1}$ yields
\begin{align*}
\log |\N_{\Q(\alpha)/\Q}(\alpha^{p^L-1}-1)| &\ll [\Q(\alpha):\Q(\alpha^{p^L-1})]\left((-\eta'^{1/2}\log(\eta'))[\Q(\alpha^{p^L-1}):\Q]+o_{\eta',C'}([\Q(\alpha^{p^L-1}):\Q])\right)
\end{align*}
Therefore
\begin{align*}
n_p\log p\leq \log |\N_{\Q(\alpha)/\Q}(\alpha)|+\log |\N_{\Q(\alpha):\Q}(\alpha^{p^L-1}-1)|&\ll_X (-\eta^{1/2}\log\eta) d+o_{C,\eta}(d).
\end{align*}
This proves the first inequality. 

Write $C_1(X)$ for the implicit constant from the first point. For any $M>0$ we have
\begin{align*}
\log |\zeta_{\Q(\alpha)}(\sigma+it)|&\leq \log|\zeta_{\Q(\alpha)}(\sigma)|\leq 2\sum_\frak p \frac{1}{N(\frak p)^\sigma}\leq 2\sum_{N(\frak p)\leq M}\frac{1}{N(\frak p)^\sigma}+ 2d\sum_{M< p}\frac{1}{p^\sigma}\\
&\leq 2C_1(M)\left(\kappa_1(\eta)d+o_{M,C,\eta}(d)\right) +\frac{2M^{1-\sigma}d}{\sigma-1}.
\end{align*}
There is a function $M_0(\eta)$ with $M_0(\eta)\to \infty$ as $\eta\to 0$ such that $C_1(M_0(\eta))\kappa_1(\eta)\to 0$ as $\eta\to 0$. Put $\kappa_{2,\sigma}(\eta):=2C_1(M_0(\eta))\kappa_1(\eta)+2M_0(\eta)^{1-\sigma}(\sigma-1)^{-1}.$ Choosing $M=M_0(\eta)$ we get 
$$\log |\zeta_{\Q(\alpha)}(\sigma+it)|\leq \kappa_{2,\sigma}(\eta)d+o_{C,\eta}(d).$$ Second point is proved.
\end{proof}
We now prove a sequence of statements that will imply Lemma \ref{lem:BiluFinal}.
\begin{lemma}\label{lem:BiluMoments}
Let $\alpha$ be an algebraic integer. Then for every $0<t<1$
$$\sum_{n\geq 1}\frac{e^{-nt}}{n}\left|\sum_{\sigma\in H_1}\sigma(\alpha)^n\right|^2 \leq d\left(\frac{td}{2}+2\mah(\alpha)-\log t+ 2|H_2|\log 2+\frac{1}{2}\right).$$
\end{lemma}
\begin{proof}
 The discriminant of the minimal polynomial of $\alpha$ is a non zero integer. Hence
\begin{align}\nonumber \sum_{\sigma\neq\sigma'\in H}\log|\sigma(\alpha)-\sigma'(\alpha)|&\geq 0\\
 \sum_{\sigma\neq\sigma'\in H_1}\log|\sigma(\alpha)-\sigma'(\alpha)|&\geq -2\sum_{\substack{\sigma\in H_1\\ \nonumber \sigma'\in H_2}}\log|\sigma(\alpha)-\sigma'(\alpha)|-\sum_{\sigma\neq\sigma'\in H_2}\log|\sigma(\alpha)-\sigma'(\alpha)|\\ \label{eq:BiluMomentsEq1}
&\geq -2|H|\sum_{\sigma'\in H_2}(\log^+|\sigma'(\alpha)|+\log 2)=-2|H|(\mah(\alpha)+|H_2|\log 2).
\end{align}
Let $0<t<1$. Using Lemma \ref{lem:LogTrick} and the identity $|1-\sigma(\alpha)\overline{\sigma'(\alpha)}|=|\sigma(\alpha)-\overline{\sigma'(\alpha)}|$ for $\sigma,\sigma'\in H_1$ we get
\begin{align*}\sum_{\sigma\neq \sigma'\in H_1}\log|1-e^{-t}\sigma(\alpha)\overline{\sigma'(\alpha)}|&\geq -{|H_1|\choose 2}t+\sum_{\sigma\neq\sigma'\in H_1}\log|\sigma(\alpha)-\sigma'(\alpha)|.\\ 
\end{align*}
We have  $\log|1-e^{-t}|\geq \log t-\frac{1}{2}$ for $0<t<1$, so by (\ref{eq:BiluMomentsEq1}) and the previous inequality we get
\begin{align*}\sum_{\sigma,\sigma'\in H_1}\log|1-e^{-t}\sigma(\alpha)\overline{\sigma'(\alpha)}|&\geq -|H|\left(\frac{t|H|}{2}+2m(\alpha)+2|H_2|\log 2-\log t+\frac{1}{2}\right).
\end{align*}
It remains to expand the left hand side as a power series in $e^{-t}$ 
$$-\sum_{\sigma,\sigma'\in H_1}\log|1-e^{-t}\sigma(\alpha)\overline{\sigma'(\alpha)}|=\frac{1}{2}\sum_{\sigma,\sigma'\in H_1}\sum_{n\neq 0}\frac{e^{-t|n|}}{|n|}\sigma(\alpha)^n\overline{\sigma'(\alpha)^n}=\sum_{n=1}^\infty \frac{e^{-nt}}{n}\left|\sum_{\sigma\in H_1}\sigma(\alpha)^n\right|^2.$$ The lemma follows upon noting that $|H|=d.$
\end{proof}
\begin{corollary}\label{col:BiluTrace}
$$\left|\sum_{\sigma\in H_1}\sigma(\alpha)\right|\leq e^{1/2}d^{1/2}\left(3\mah(\alpha)+\log d-\log^+\mah(\alpha)+2|H_2|\log 2+1\right)^{1/2}.$$
\end{corollary}
\begin{proof}
In Lemma \ref{lem:BiluMoments} take $t=d^{-1}$ if $\mah(\alpha)<1$, $t=\frac{\mah(\alpha)}{d}$ if $1\leq \mah(\alpha)<d$ and $t=1$ otherwise. 
\end{proof}

\begin{lemma}\label{lem:BiluNorm}
Let $\alpha$ be an algebraic integer. Then,
 $$\log|\N_{\Q(\alpha)/\Q}(1-\alpha)|\leq \begin{cases}
                                           |H_2|+2\mah(\alpha)+d^{1/2}(\frac{1}{2}+\log^+\frac{d}{\mah(\alpha)})^{1/2} & \\ (3\mah(\alpha)+\log^+\frac{d}{\mah(\alpha)}+2|H_2|\log 2+\frac{1}{2})^{1/2} & \textrm{ if } \mah(\alpha)\geq 1\\
                                           |H_2|+2+d^{1/2}(3+\log d+2|H_2|\log 2) & \textrm{ otherwise.}
                                          \end{cases}
$$
\end{lemma}
\begin{proof} We keep notation from the proof of Lemma \ref{lem:BiluMoments}
 \begin{align*}\log|\N_{\Q(\alpha)/\Q}(1-\alpha)|&=\sum_{\sigma\in H}\log|1-\sigma(\alpha)|\leq \sum_{\sigma\in H_2}(\log^+|\sigma(\alpha)|+\log 2)+ \sum_{\sigma\in H_1}\log|1-\sigma(\alpha)|\\
 &\leq |H_2|+\mah(\alpha)+\sum_{\sigma\in H_1}\log|1-\sigma(\alpha)|.\end{align*}

Let $0<t<1$. By Lemma \ref{lem:LogTrick} we can bound the last expression from above by 
\begin{align*}
 |H_2|+\mah(\alpha)+\frac{|H_1|t}{2}+\sum_{\sigma\in H_1}\log|1-e^{-t}\sigma(\alpha)|.
\end{align*}
We expand the last sum as a series in $e^{-t}$, use Cauchy-Schwartz, then Lemma \ref{lem:BiluMoments} and the inequality $-\log(1-e^{-t})\leq \frac{1}{2}-\log t$: 
\begin{align*}\left|\sum_{\sigma\in H_1}\log|1-e^{-t}\sigma(\alpha)|\right| &\leq \left(\sum_{n\geq 1}\frac{e^{-nt}}{n}\right)^{1/2}\left(\sum_{n\geq 1}\frac{e^{-nt}}{n}\left|\sum_{\sigma\in H_1}\sigma(\alpha)^n\right|^2\right)^{1/2}\\
&\leq (\frac{1}{2}-\log t)^{1/2}|H|^{1/2}\left(\frac{t|H|}{2}+2\mah(\alpha)-\log t+ 2|H_2|\log 2+\frac{1}{2}\right)^{1/2}.
\end{align*}
To get the Lemma we put $t=1/|H|$ if $\mah(\alpha)<1$, $t=\mah(\alpha)/|H|$ if $1\leq\mah(\alpha)<|H|$ and $t=1$ otherwise. We arrive at the inequalities from the lemma by  simple algebraic manipulations, case by case.
\end{proof}
Lemma \ref{lem:BiluFinal} follows now from Corollary \ref{col:BiluTrace} and Lemma \ref{lem:BiluNorm}.

\begin{lemma}\label{lem:LogTrick}
For $0<t$ and $|z|=1, z\neq 1$ we have $\log|1-e^{-t}z|\geq \log|1-z|-\frac{t}{2}$. 
\end{lemma}
\begin{proof}
\begin{align*}
\log|1-e^{-t}z|-\log|1-z|&=\int_0^t\frac{\p}{\p s}\log|1-e^{-s}z|ds=\frac{1}{2}\int_0^t\left(\sum_{n\neq 0}e^{-|n|s}z^n\right)ds\\ &=-\frac{t}{2}+\frac{1}{2}\int_0^t \left(\sum_{n=-\infty}^\infty e^{-|n|s}z^n\right)ds=-\frac{t}{2}+\frac{1}{2}\int_0^t\frac{1-e^{-2s}}{|1-e^{-s}z|^2}ds>-\frac{t}{2}.
\end{align*}
\end{proof}

\section{Trace formula for congruence lattices}\label{sec:TraceFormulas}
Let us fix $\K=\R,\C$, an admissible pair $k,A$ and let $s=r_1+r_2$ be the number of infinite places of $k$. Recall we write $G=\PGL(2,\K)$ and $\mathbf G=\PA^\times$. We fix a maximal compact subgroup $U\subset \mathbf G(\mathbb A_f)$ and reserve letter $V$ for open subgroups of $U$. We endow $\mathbf G(\mathbb A)$ with the standard measure $\mu^{\st}$, defined as in Section \ref{sec:VolumeConventions}. Let $W$ be the set of non-torsion regular semisimple elements of $G$. The goal of this part is to estimate, for $\Gamma=\Gamma_V$, the sum $\sum_{[\gamma]_{\Gamma}\subset W}\Vol(\Gamma_\gamma\bs G_\gamma)\O_\gamma(f)$ by an expression akin to the geometric side of the adelic trace formula where the summation is taken over the conjugacy classes in $\mathbf G(k)$ rather than in $\Gamma$. The regular semisimple conjugacy classes  in $\mathbf G(k)$ can be then easily parametrized using Skolem-Noether theorem, which we do in Section \ref{sec:ConjClasses}.
\subsection{Class groups}
For any open compact subgroup $V$ of $\mathbf G(\mathbb A_f)$ we define the \textbf{class group} $\cl(V):=\mathbf G(k)\bs \mathbf G(\mathbb A_f)/V.$ It will play a prominent role in the trace formulas developed in the Section \ref{sec:CongTraceForm}. A priori it is not clear why this should be a group. We will show this as a consequence of the strong approximation property for the group $A^1:=\SL_1(A)$. Let $k_A^\times:=\{x\in k^\times| \, (x)_\nu>0 \textrm{ for } \nu\in \Ram_\infty A\}.$
\begin{lemma}\label{lem:ClassGroup}
The double quotient $\cl(V)$ is a finite abelian group of exponent $2$. Moreover, if $U=\prod_{\frak p}U_\frak p$ is a maximal compact subgroup of $\mathbf G(\mathbb A_f)$ and $S=\{\frak p| U_\frak p\sim U_\frak p^1\}$, then $$\cl(U)\simeq \mathbb A_f^\times / (\mathbb A_f^\times)^2k_A^\times  \prod_{\frak p\not\in S\cup\Ram_f A}\frak o_\frak p^\times \prod_{\frak p\in S\cup\Ram_f A}k_\frak p^\times.$$
\end{lemma}
\begin{proof}
Write $\tilde V$ for the preimage of $V$ in $A^\times(\mathbb A_f)$. 
Note that $A^1(k), A^1(\mathbb A_f)$ are the kernels of the norm map $\n$ restricted to $A^\times(k), A^\times(\mathbb A_f)$ respectively. 
Since $A$ is admissible we have $\nu_1\not\in \Ram A$. By the strong approximation property \cite[Theorem 7.7.5]{MaRe03} the group $A^1(k)$ is dense in $A^1(\mathbb A_f)$. Therefore 
\begin{equation}\label{eq:ClassNumber1}\cl(V)=\mathbf G(k)\bs \mathbf G(\mathbb A_f)/V\simeq A^\times(k)\bs A^\times(\mathbb A_f)/\tilde V\simeq \n(A^\times(k))\bs \n(A^\times(\mathbb A_f))/\n(\tilde V).\end{equation}
By Eichler's theorem on norms \cite[Theorem 7.4.1]{MaRe03} $\n(A^\times(k))=k_A^\times.$ Since $\n(\GL(2,k_\frak p))=\n(D_\frak p)=k_\frak p^\times$ we get $\n(A^\times(\mathbb A_f))=\mathbb A_f^\times.$ Finally $\n(\mathbb A_f^\times)\subset \n(\tilde V)$ so $\cl(V)$ is a quotient of $k^\times_A\bs \mathbb A_f^\times/ (\mathbb A_f^\times)^2$ which is an abelian group of exponent $2$. Finiteness will follow from the second assertion, as we can take $V\subset U$ with $[U:V]<\infty$. Let $U=\prod_{\frak p}U_\frak p$ be as in the statement of the lemma. We have $\n(\tilde U_\frak p)=k_\frak p^\times$ if $\frak p\in S\cup\Ram_f A$ and $\n(\tilde U_\frak p)=(k_\frak p^\times)^2\frak o_\frak p^\times$ otherwise. Therefore 
$$\n(\tilde U)=(\mathbb A_f^\times)^2 \prod_{\frak p\not\in S\cup\Ram_f A}\frak o_\frak p^\times \prod_{\frak p\in S\cup\Ram_f A}k_\frak p^\times.$$ By (\ref{eq:ClassNumber1}) we have $\cl(U)=\n(A^\times)\bs \n(A^\times(\mathbb A_f))/\n(\tilde U))=k_A^\times \bs \mathbb A_f^\times/ \n(\tilde U)$. Lemma follows now from the formula for $\n(\tilde U)$. 
\end{proof}
For later use we show: 
\begin{lemma}\label{lem:SecondIneq}
Let $\gamma\in \mathbf G(k)$ be a regular semisimple non-torsion element\footnote{The statement is actually true all regular semisimple elements, but in the torsion case the centralizer might be bigger than what we wrote in the proof.}. Let $V$ be an open subgroup of $\mathbf G(\mathbb A_f)$. Then $|\mathbf G(\mathbb A)/\mathbf G(k)V\mathbf G_\gamma(\mathbb A)|\leq 2$.
\end{lemma}
\begin{proof}
Let $l:=k(\gamma)\subset A$. $\mathbf G_\gamma(k)$ in $\mathbf G(k)$ is the image of $l^\times=k(\gamma)^\times$ modulo center. Like in the proof of Lemma \ref{lem:ClassGroup} the strong approximation theorem yields a bijective map 
$$\n\colon \mathbf G(\mathbb A)/\mathbf G(k)V\mathbf G_\gamma(\mathbb A)\to \n(A^\times(\mathbb A))/ \n(A^\times(k))\n(\mathbb A(\gamma)^\times)\n(\tilde V).$$
By the weak approximation and Eichler norm theorem \cite[Theorem 7.7.5]{MaRe03} $\n(A^\times(\mathbb A))/ \n(A^\times(k))\simeq \mathbb A^\times/k^\times$. On the other hand $\n(\mathbb A(\gamma)^\times)=\N_{l/k}((\mathbb A\otimes_k l)^\times)$ if $l/k$ is quadratic or $\n(\mathbb A(\gamma)^\times)=\mathbb A^\times$ if $l=k\times k$ (in which case $A$ must be split). By the second inequality of global class field theory \cite[5.1(a)]{MilneCFT} $|\mathbb A^\times/k^\times \N_{l/k}((\mathbb A\otimes_k l)^\times)|\leq 2$. The lemma follows.
\end{proof}

\subsection{Trace formula}\label{sec:CongTraceForm}
Let $f\in C_c(G)$, let us fix an open subgroup $V$ of $U$ and let $\Gamma=\Gamma_V$. We put $$f_\mathbb A:=f\times \mathbf 1_{\SO(3)}^{s-1}\times[U:V]\mathbf 1_V\in C_c(\mathbf G(\mathbb A)).$$ 
We identify the set of unitary characters $\chi\colon\mathbf G(\mathbb A)\to \C^\times$ that are trivial on $\mathbf G(k)V\mathbf G(\mathbb A_\infty)$ with $\widehat{\cl}(V)$. 
\begin{lemma}\label{lem:TraceForm0}
Suppose either that $\mathbf G$ is $k$-anisotropic or that $f$ is supported on set of regular semisimple elements of $G$. Then, 
$$\int_{\Gamma_V\bs G} \left(\sum_{\gamma\in \Gamma_V}f(g^{-1}\gamma g)\right)dg=\frac{1}{|\cl(V)|}\sum_{\chi\in \widehat{\cl}(V)}\int_{\mathbf G(k)\bs \mathbf G(\mathbb A)} \left(\sum_{\gamma\in \mathbf G(k)}\chi(g)f_\mathbb A(g^{-1}\gamma g)\right)dg.$$
\end{lemma}
\begin{proof} The right hand side equals
\begin{align*}\int_{\mathbf G(k)\bs \mathbf G(k)V\mathbf G(\mathbb A_\infty)}\left(\sum_{\gamma\in \mathbf G(k)}f_\mathbb A(g^{-1}\gamma g)\right)dg=& \int_{\mathbf G(k)\cap V\mathbf G(\mathbb A_\infty)\bs V\mathbf G(\mathbb A_\infty)} \left(\sum_{\gamma\in \mathbf G(k)\cap \mathbf G(\mathbb A_\infty)V} f_\mathbb A(g^{-1}\gamma g)\right)dg\\=& \mu^{\rm st}_{\mathbf G(\mathbb A_f)}(V)[U:V]\int_{\Gamma_V\bs G} \left(\sum_{\gamma\in \Gamma_V}f(g^{-1}\gamma g)\right)dg.\end{align*} The lemma follows since the standard measure is chosen so that $\mu^{\rm st}_{\mathbf G(\mathbb A_f)}(U)=1$.
\end{proof}
Arguing as in \cite[p. 7-9]{Clay03} we get
\begin{equation}\label{eq:TraceEq1}
 \int_{\mathbf G(k)\bs \mathbf G(\mathbb A)} \left(\sum_{\gamma\in \mathbf G(k)}\chi(g)f_\mathbb A(g^{-1}\gamma g)\right)dg=\sum_{[\gamma]\subset \mathbf G(k)}\Vol(\mathbf G_\gamma(k)\bs \mathbf G_\gamma(\mathbb A))\sum_{\substack{\chi\in \widehat{\cl}(V)\\ \chi|_{\mathbf G_\gamma(\mathbb A)}=1}}\O_\gamma^\chi(f_\mathbb A),
\end{equation}
where  $\O_\gamma^\chi(f_\mathbb A):=\int_{\mathbf G_\gamma(\mathbb A)\bs \mathbf G(\mathbb A)}\chi(g)f_\mathbb A(g^{-1}\gamma g)dg$. Using Lemmas \ref{lem:SecondIneq} and \ref{lem:TraceForm0} we now deduce:
\begin{lemma}\label{lem:TFTraceFormula} Let $W$ be the set of non-torsion regular semisimple elements of $G$. For any non-negative function $f\in C_c(G)$ we have
\begin{equation}
\sum_{[\gamma]\subset \Gamma\cap W}\Vol(\Gamma_\gamma\bs G_\gamma)\O_\gamma(f)\leq \frac{2}{|\cl(V)|}\sum_{[\gamma]\subset \mathbf G(k)\cap W}\Vol(\mathbf G_\gamma(k)\bs \mathbf G_\gamma(\mathbb A))\O_\gamma(f_\mathbb A).
\end{equation}
\end{lemma}
\begin{proof}
By Lemma \ref{lem:TraceForm0} and (\ref{eq:TraceEq1}) for every $h\in C_c(G)$ supported on the set of regular semisimple elements we have 
\begin{equation}\label{eq:Trace1}\sum_{[\gamma]\subset \Gamma}\Vol(\Gamma_\gamma\bs G_\gamma)\O_\gamma(h)=\frac{1}{|\cl(V)|}\sum_{[\gamma]\subset \mathbf G(k)}\Vol(\mathbf G_\gamma(k)\bs \mathbf G_\gamma(\mathbb A))\sum_{\substack{\chi\in \widehat{\cl}(V)\\ \chi|_{\mathbf G_\gamma(\mathbb A)}=1}}\O^\chi_\gamma(h_\mathbb A).\end{equation}
 Let $\gamma_0\in W\cap \Gamma$ and take a sequence $(U_n)_{n\in \N}\subset W$ of shrinking open sets with $\bigcap_{n=0}^\infty U_n=[\gamma_0]_G$. For each $n\in \N$ choose a continuous function $\phi_n\colon G\to \R_{\geq 0}$ with $\supp \phi_n\subset U_n$ and $\phi_n|\gamma_0^G=1$. Put $h_n:=\phi_nf$. The conjugacy classes classes of regular semisimple elements of $\Gamma$ are separated in $G$ so for $n$ big enough $[\gamma_0]_G$ is the only class of $\Gamma$ intersecting $\supp h_n$. Similarly, for $n$ big enough $[\gamma_0]_{\mathbf G(\mathbb A)}$ is the only conjugacy class of $\mathbf G(k)$ intersecting $\supp (h_n)_\mathbb A$. Hence, if we apply (\ref{eq:Trace1}) to $h_n$ and take a limit as $n\to\infty$, we get
$$\Vol(\Gamma_{\gamma_0}\bs G_{\gamma_0})\O_{\gamma_0}(f)=\frac{1}{|\cl(V)|}\Vol(\mathbf G_{\gamma_0}(k)\bs \mathbf G_{\gamma_0}(\mathbb A))\sum_{\substack{\chi\in \widehat{\cl}(V)\\ \chi|_{\mathbf G_{\gamma_0}(\mathbb A)}=1}}\O^\chi_{\gamma_0}(f_\mathbb A).$$
The number of characters $\chi\in \widehat{\cl(V)}$ that are trivial on $\mathbf G_{\gamma_0}(k)$ is at most $2$ by Lemma \ref{lem:SecondIneq} so
$$\Vol(\Gamma_{\gamma_0}\bs G_{\gamma_0})\O_{\gamma_0}(f)\leq \frac{2}{|\cl(V)|}\Vol(\mathbf G_{\gamma_0}(k)\bs \mathbf G_{\gamma_0}(\mathbb A))\O_{\gamma_0}(f_\mathbb A).$$
We deduce the lemma by taking the sum over all $[\gamma_0]\subset \Gamma\cap W$. 
\end{proof}
\begin{remark} 
Our reasoning in this section can be compared to the stabilization procedure employed by Langlands and Labesse in \cite{LL}.
\end{remark}
\section{Non-Archimedean estimates}\label{sec:NAEstimates}  The orbital integrals appearing in Lemma \ref{lem:TFTraceFormula} decompose as $\O_\gamma(f_\mathbb A)=[U:V]\O_\gamma(f)\O_\gamma(\mathbf 1_V)$. This is an immediate consequence of the definition of $f_\mathbb A$ as a product of the Archimedean part $f\times \mathbf 1_{\SO(3)}^{s-1}$ and the non-Archimedean part $[U:V]\mathbf 1_V$. In this section we estimate $\O_\gamma(\mathbf 1_V)$ in terms of the norm of Weyl discriminant $\N_{k/\Q}(\Delta(\gamma))$, index $[U:V]$ and certain arithmetic invariants of $\mathbf G$. This can be compared with \cite{FiLa14}, where a bound of the form  $\O_\gamma(\mathbf 1_\mathbf V)\ll [U:V]^{-\delta}$ was obtained for a fixed  $U\subset\mathbf G(\mathbb A_f),1\neq \gamma\in \mathbf G(k)$ and the family of open subgroups $V$ of $U$. The proof  in \cite{FiLa14} proceeds via a beautiful structure theorem for open subgroups of $U$ (an ``approximation principle'' \cite[Thm 2.2]{FiLa14}). The implicit constants in \cite{FiLa14} depend on $\mathbf G, U$ so unfortunately we cannot use these bounds without working out this dependence explicitly. Instead, we take another approach relying on character bounds for irreducible representations of $U_\frak p$ and the twist representation zeta function of $U$. Our estimate takes the following form:
\begin{proposition}\label{prop:NonArchBound}
Let $\gamma\in \mathbf G(k)$ be regular semisimple. Equip $\mathbf G(\mathbb A_f)$ with the standard measure relative to $U$ (see Section \ref{sec:VolumeConventions}). Then for every $m\geq 6$ 
$$\O_\gamma(\mathbf 1_V)\leq |\N_{k/\Q}(\Delta(\gamma))|^{7}J_1[U:V]^{-1/2m} J_2^{1/2m}J_3^{1/2m}$$ where $J_1=2^{3\pi_k(65)+|\Ram_f A|/2+|S|/2}, J_2=2^{2[k:\Q]+|S|+|\Ram_f A|}|\cl(k)/\cl(k)^2|\frac{|\cl(V)|}{|\cl(U)|}$ and  \\ $ J_3=\zeta_k(2)^{24} 162^{|S|}2^{|\Ram_f A|}\prod_{\frak p\in S\cup\Ram_f A}(N(\frak p)+1)^3$.
\end{proposition}
We recall that $\pi_k(X)$ is the number of primes $\frak p$ of $\O_k$ with $\N(\frak p)\leq X$ and $S$ is the set of primes $\frak p$ where $U_\frak p\simeq U_\frak p^1$ (c.f. Section \ref{sec:CongruenceLattices}). With our methods, the exponent $7$ next to $\N_{k/\Q}(\Delta(\gamma))$ can be brought arbitrarily close to $3/2$, at the cost of increasing other factors. Later, we are going to show that in our application the norm $\N_{k/\Q}(\Delta(\gamma))$ is quite small so we can afford a large exponent. The constant $J_3$ is very far from being optimal. However, do not need to optimize it because we can reduce its impact by taking sufficiently big $m$.
We prove the Proposition \ref{prop:NonArchBound} in Section \ref{sec:ProofPropNE}, where we introduce the main ingredients of the argument as black boxes. These are Lemmas \ref{lem:CharacterOI},\ref{lem:RepzetaBound}, and \ref{lem:AbelianizationBound}. We prove them in Sections \ref{sec:CharacterOI},\ref{sec:RepZetaBound} and \ref{sec:AbelianizationBound} respectively.
\subsection{Proposition \ref{prop:NonArchBound}}\label{sec:ProofPropNE}
First, we note that $\O_\gamma(\mathbf 1_V)$ remains unchanged when we conjugate $V$ by elements of $\mathbf G(\mathbb A_f)$. Therefore 
$$\O_\gamma(\mathbf 1_V)=\int_U\O_\gamma(\mathbf 1_V^u)du=\frac{1}{[U:V]}\O_\gamma(\chi_{\Ind_V^U 1}),$$ where $\chi_V^U 1$ is the character of the induced representation $\Ind_V^U1$. For any representation $(\rho,H_\rho)$ of $U$ write $\dim \rho^V$ for the dimension of $V$-fixed subspace $H_\rho^V$. By the Frobenius reciprocity we have
\begin{equation}\label{eq:FrobeniusRec}\O_\gamma(\chi_{\Ind_V^U 1})=\sum_{\rho\in \Irr U}\dim \rho^V \O_\gamma(\chi_{\rho}).\end{equation} This quantity will be compared with $[U:V]=\sum_{\rho\in \Irr U}\dim \rho^V\dim \rho$. Working with characters of irreducible representations of $U$ has the following advantage: they always decompose as product of characters of irreducible representations of local groups $U_\frak p$. This will allow us to easily deduce the global bounds from the local ones. We will show:
\begin{lemma}\label{lem:CharacterOI}
For every irreducible representation $\rho\in \Irr U$ we have
$$\O_\gamma(|\chi_\rho|)\leq |\N_{k/\Q}(\Delta(\gamma))|^7 J_1 (\dim \rho)^{1/2},$$ where $J_1=2^{3\pi_k(65)+|\Ram_f A|/2+|S|/2}$.
\end{lemma}
In order to show that $\O_\gamma(\chi_{\Ind_V^U1})$ is much smaller than $[U:V]=\sum_{\rho\in \Irr U}\dim \rho^V \dim \rho$ we will need some information about the growth of dimensions of representations of $U$. Indeed, after we plug Lemma \ref{lem:CharacterOI} into the right hand side of (\ref{eq:FrobeniusRec}) we see that the savings come from high dimensional representations. This heuristic will be made precise using the twist representation zeta function of $U$. 
\begin{definition}\label{def:TwistRepZeta}
Let $K$ be a profinite group. For two representations $\rho,\rho'\in \Irr K$ we write $\rho\sim \rho'$ if there exists a one dimensional character $\theta$ such that $\rho\simeq \rho'\otimes\theta$. We define the \textbf{ twist representation zeta function } $\zeta_K^*(s)$ as the formal Dirichlet series:
$$\zeta_K^*(s):=\sum_{\rho\in\Irr K/\sim}\frac{1}{(\dim\rho)^s}.$$
\end{definition}
For more background on twist zeta functions and their older cousins, ordinary representation zeta functions, we refer to \cite{HasaStasinski2018}. We prove an explicit upper bound on $\zeta^*_U(s)$, which takes the form 
\begin{lemma}\label{lem:RepzetaBound} For $s\geq 4$ we have  $$\zeta_U^*(s)\leq J_3:=\zeta_k(2)^{24} 162^{|S|}2^{|\Ram_f A|}\prod_{\frak p\in S\cup\Ram_f A}(N(\frak p)+1)^3.$$ \end{lemma} The bound above is far from optimal\footnote{In the previous version of the manuscript we proved a bound $\zeta_U^*(7)\leq\zeta_k(2)\prod_{\frak p\in S\cup \Ram_f A}(N(\frak p)+1)$.}, but it can be deduced fairly quickly from the existing literature. We do not optimize the bound because what we shall see in final bound is $J_3^{1/2m}$ and by taking big $m$ we can offset this error. 

The last ingredient needed to prove Proposition \ref{prop:NonArchBound} is an upper bound on the index of the image of $V$ in the abelianization of $U$. Write $j\colon U\to U^{\rm ab}$ for the abelianization map. We have
\begin{lemma}\label{lem:AbelianizationBound}
$$[j(U):j(V)]\leq J_2:= 2^{2[k:\Q]+|S|+|\Ram_f A|}|\cl(k)/\cl(k)^2|\frac{|\cl(V)|}{|\cl(U)|}.$$
\end{lemma}

With Lemmas \ref{lem:CharacterOI},\ref{lem:RepzetaBound} and \ref{lem:AbelianizationBound} at our disposal we can now bound $\O_\gamma(\chi_{\Ind_V^U1}).$ Choose a normal open subgroup $N\triangleleft U$ such that $N\subset V$. All representations $\rho$ contributing to the formula (\ref{eq:FrobeniusRec}) factor through $U/N$. Write $\overline U=U/N, \overline V=V/N$ and identify the subset of irreducible representations of $U$ that factor through $\overline{U}$ with $\Irr\overline U$. For each $\rho\in \Irr \overline U$ let $a_\rho:=\{\theta\in \Irr\overline U| \rho\otimes\theta\simeq \rho\}.$ The reason why we pass to $\overline U$ is purely technical: we need to ensure that $a_\rho$ is finite. Write $\rho\sim \rho', \rho,\rho'\in \Irr \overline U$ if $\rho'\simeq \theta\otimes \rho$ for some one dimensional character $\theta$ of $\overline U$. 

\begin{align*}
\O_\gamma(\chi_{\Ind_V^U1})=\sum_{\rho\in \Irr \overline U}\dim \rho^{\overline V} \O_\gamma(\chi_{\rho})=&\sum_{\rho\in \Irr \overline U/\sim}\frac{1}{a_\rho}\sum_{\dim \theta=1}\dim (\rho\theta)^{\overline V} \O_\gamma(\chi_\rho \theta)\\
\leq&\sum_{\rho\in \Irr \overline U/\sim}\frac{\O_\gamma(|\chi_\rho|)}{a_\rho}\sum_{\theta\in \Irr\overline U^{\rm ab}}\dim (\rho\theta)^{\overline V}.
\end{align*}
If two characters $\theta,\theta'$ agree on $V$, then obviously $\dim (\rho\theta)^V=\dim(\rho\theta)^V$. Write $\overline j:\overline U\to \overline U^{\rm ab}$ for the abelianization map. Put $C_\rho:=\sum_{\xi\in \Irr\: \overline j(\overline V)}\dim (\rho\xi)^V$, where $\rho\xi$ is understood as the tensor product of $\xi\circ \overline j$ and the restriction $\rho|_{V}$. The last sum becomes:
\begin{align}\label{eq:PreHolder}
\sum_{\rho\in \Irr \overline U/\sim}\frac{\O_\gamma(|\chi_\rho|)[\overline j(\overline U):\overline j(\overline V)]C_\rho }{a_\rho}\leq |\N_{k/\Q}(\Delta(\gamma))|^7 J_1[\overline j(\overline U):\overline j(\overline V)]\sum_{\rho\in \Irr \overline U/\sim}\frac{(\dim\rho)^{1/2}C_\rho }{a_\rho},
\end{align} by Lemma \ref{lem:CharacterOI}. Let $m>0$ be such that $\zeta_U^*(m-2)<\infty$. To bound the rightmost sum we use H\"older's inequality and the fact that $C_\rho\leq \dim\rho$:
\begin{align*}
\sum_{\rho\in \Irr \overline U/\sim}\frac{(\dim\rho)^{1/2}C_\rho }{a_\rho}=&\sum_{\rho\in \Irr \overline U/\sim}\left(\frac{(\dim\rho)\, C_\rho }{a_\rho}\right)^{\frac{2m-1}{2m}}\left(\frac{C_\rho}{a_\rho(\dim\rho)^{m-1}}\right)^{\frac{1}{2m}}\\
\leq& \left(\sum_{\rho\in \Irr \overline U/\sim}\frac{(\dim\rho) C_\rho }{a_\rho}\right)^{\frac{2m-1}{2m}}\left(\sum_{\rho\in \Irr \overline U/\sim}\frac{C_\rho}{a_\rho(\dim\rho)^{m-1}}\right)^{\frac{1}{2m}}\\
\leq& \left(\frac{[\overline{U}:\overline V]}{[\overline j(\overline U):\overline j(\overline V)]}\right)^{\frac{2m-1}{2m}}\left(\sum_{\rho\in \Irr \overline U/\sim}\frac{1}{(\dim\rho)^{m-2}}\right)^{\frac{1}{2m}}\\
\leq& \left(\frac{[U:V]}{[j(U):j(V)]}\right)^{\frac{2m-1}{2m}}\zeta_U^*(m-2)^{\frac{1}{2m}}.
\end{align*}
We plug it into (\ref{eq:PreHolder}) to get 
$$[U:V]\O_\gamma(\mathbf 1_V)=\O_\gamma(\chi_{\Ind_V^U 1})\leq  |\N_{k/\Q}(\Delta(\gamma))|^7 J_1 [U:V]^{\frac{2m-1}{2m}}[j(U):j(V)]^{\frac{1}{2m}}\zeta_U^*(m-2)^{\frac{1}{2m}}.$$
To finish the proof of the proposition it remains to use the bounds $[j(U):j(V)]\leq J_2$ and $\zeta_U^*(m-2)\leq J_3$. These are Lemmas \ref{lem:AbelianizationBound} and \ref{lem:RepzetaBound} respectively.

\subsection{Local orbital integrals} In this section we provide the bounds on $\O_\gamma(\mathbf 1_{U_{\frak p}^i})$ following closely Langlands' exposition \cite[p.39-52]{BaseChangeGL2}. 
The groups $U_\frak p^i$ are the representatives of all conjugacy classes of maximal compact subgroups in $\mathbf G(k_\frak p)$, described in Section \ref{sec:CongruenceLattices}. We recall that we compute the all the orbital integrals with respect to the standard measure $\mathbf G_\gamma(k_\frak p)$ and the standard measure on $\mathbf G(k_\frak p)$ relative to $U_\frak p^i$, so the measure of $U_\frak p^i$ is always $1$.
\begin{lemma}\label{lem:LocalOI}
Let $\gamma\in \mathbf G(k_\frak p)$ be a regular semisimple non-torsion element\footnote{ non-torsion just to avoid disconnected centralizers.}. If $\gamma$ is not split in $\mathbf G(k_\frak p)$ write $l_\frak p=k_\frak p[\gamma]$. Let $q$ be the cardinality of the residue field of $k_\frak p$. If $\O_\gamma(\mathbf 1_{U_\frak p^i})$ does not vanish, then the eigenvalues of $\Ad \gamma$ must be units lying in a quadratic extension of $k_\frak p$. We have the following formulas/bounds on $\O_\gamma(\mathbf 1_{U_\frak p^i})$
\begin{center}
\renewcommand{\arraystretch}{1.4}
  \begin{tabular}{| c | l | l | p{5cm}| }\hline
                  & $\gamma$ split                       & $l_\frak p/k_\frak p$ unramified & $l_\frak p/k_\frak p$ ramified\\ \hline
    $U_\frak p^0$ & $|\Delta(\gamma)|_{k_\frak p}^{-1/2}$ & $\frac{q+1}{q-1}|\Delta(\gamma)|_\frak p^{-1/2}-\frac{2}{q-1}$  & $\leq 2\frac{|\Delta(\gamma)|_\frak p^{-1/2}q^{1/2}-1}{q-1}$ if $|\Delta(\gamma)|_\frak p<1$ and $0$ if $|\Delta(\gamma)|_\frak p=1$\\ \hline
    $U_\frak p^1$ & $|\Delta(\gamma)|_{k_\frak p}^{-1/2}$ & $\frac{q+1}{q-1}|\Delta(\gamma)|_\frak p^{-1/2}-\frac{q+1}{q-1}$& $\leq 2\frac{|\Delta(\gamma)|_\frak p^{-1/2}q^{1/2}-1}{q-1}-1$\\ \hline
    $U_\frak p^2$ & -                                    & $1$&$1$\\ \hline
  \end{tabular}
\end{center}
\end{lemma}
\begin{proof}
For $U_\frak p^0$, the exact computations were made in \cite[p.39-52]{BaseChangeGL2} by studying the action of $\gamma$ on the Bruhat-Tits building $\frak X$ of $\SL(2,k_\frak p)$. For the sake of the reader we sketch the argument and at the same time compute the orbital integrals of $U_\frak p^1$. Write $\mu^{\rm st}$ for the standard measure on $\mathbf G(k_\frak p)$ and let $\mathbf T$ be the torus centralizing $\gamma$. 
In the split case we can assume, by conjugating $\gamma$, that $\mathbf T$ is the diagonal subgroup of $\PGL(2,k_\frak p)$. Let $\varpi$ be a uniformiser of $k_\frak p$. Write $\Lambda$ for the discrete subgroup of $\mathbf T$ generated by the class of $\begin{pmatrix}\varpi & 0\\0 & 1\end{pmatrix}$ and let $\frak A$ be the unique apartment of $\frak X$ stabilized by $\mathbf T$. The maximal compact subgroup of $\mathbf T(k_\frak p)$ is a fundamental domain of $\Lambda$, so $\mu^{\rm st}(\Lambda\bs\mathbf T(k_\frak p))=1$. Therefore $$\O_\gamma(\mathbf 1_{U_\frak p^i})=\int_{\Lambda\bs \mathbf G(k_\frak p)}\mathbf 1_{U_\frak p^i}(g^{-1}\gamma g)dg=\sum_{g\in \Lambda\bs \mathbf G(k_\frak p)/U_\frak p^i}\mathbf 1_{U_\frak p^i}(g^{-1}\gamma g)=|\Lambda\bs \{gU_\frak p^i|\gamma gU_\frak p^i=gU_\frak p^i\}|.$$ 
To go from the integral to counting cosets we have used the fact that $\mu^{\rm st}(U_\frak p^i)=1$ and that $\Lambda$ intersects trivially all the conjugates of $U_\frak p^i$. It is well known that $U_\frak p^i$ is conjugate to the stabilizer of a vertex in $\frak X$ if $i=0$ or an edge in $\frak X$ if $i=1$. Write $\frak X_0,\frak X_1$ for the set of vertices and edges of $\frak X$ respectively. Choose an edge $e\in \frak A$ and let $p_0$ be one of it's endpoints. The group $\Lambda$ acts freely and transitively on vertices and edges of $\frak A$ so the set $\mathcal F=e\cup \{x\in \frak X| {\rm dist}(x,\frak A)={\rm dist}(x,p_0)\}$ is a fundamental domain of $\Lambda$ acting on $\frak X$. If we delete all the edges in $\frak A$, then $\mathcal F$ is the connected component containing $p_0$ with edge $e$ added back to it. By \cite[Lemma 5.2]{BaseChangeGL2} the subset of $\frak X$ fixed by $\gamma$ is formed by precisely those points of $\frak X$ that are at distance at most $r$ from $\frak A$ where $q^r=|\Delta(\gamma)|_\frak p^{-1/2}, r\in\NN$. Let $\mathcal F_i$ be the set of vertices or edges in $\mathcal F$ for $i=0,1$ respectively. Since $\mathcal X$ is the $q+1$ regular tree we have $$|\Lambda\bs \{gU_\frak p^i|\gamma gU_\frak p^i=gU_\frak p^i\}|=|\mathcal F_i^\gamma|=q^r.$$ 
Now let us assume that $\gamma$ is elliptic. In that case $T(k_\frak p)$ is compact so we have $$\O_\gamma(\mathbf 1_{U_\frak p^i})=\sum_{g\in \mathbf G(k_p)/U_\frak p^i} \mathbf 1_{U_\frak p^i}(g^{-1}\gamma g)=\{gU_\frak p^i| \gamma gU_\frak p^i=gU_\frak p^i.\}$$
 Hence, $\O_\gamma(\mathbf 1_{U_\frak p^i}), i=0,1$ is respectively the number of vertices or edges of $\frak X$ stabilized by $\gamma$. Write $|\Delta(\gamma)|_\frak p^{-1/2}=q^r, r\in \frac{1}{2}\NN$. The subset of $\frak X$ fixed by $\gamma$ is a ball of radius $r$ centered around a vertex if $l_\frak p/k_\frak p$ is unramified (see \cite[p.48-49,]{BaseChangeGL2}) or a ball of radius $r-\delta, 0\leq \delta\leq r$ around  a midpoint of an edge if $l_\frak p/k_\frak p$ is ramified (see \cite[p.49-50]{BaseChangeGL2}). The number $\delta$ is the distance of $\frak X$ to the unique apartment of the Bruhat-Tits building of $\SL(2,l_\frak p)$ stabilized by $\mathbf T(l_\frak p)$.  The desired bounds follow by counting vertices or edges in $\frak X^\gamma$ and noting that in the ramified case when $i=1$ and $|\Delta(\gamma)|_\frak p=1$ there is one edge that is stabilized but not pointwise fixed.

If $i=2$, then $\mathbf G(k_\frak p)=U_\frak p^2$. There are no split elements and for every elliptic $\gamma$ we have $\O_\gamma(\mathbf 1_{U_\frak p^2})=1.$
\end{proof}
\begin{corollary}\label{cor:LocalOIBound}
Let $\gamma\in\mathbf G(k_\frak p)$ be a regular semisimple non-torsion element. We have $\O_\gamma(\mathbf 1_{U_\frak p^i})\leq 3|\Delta(\gamma)|_\frak p^{-1/2}$ if $|\Delta(\gamma)|_\frak p<1$ and $\O_\gamma(\mathbf 1_{U_\frak p^i})\leq 1$ if $|\Delta(\gamma)|_\frak p=1$. 
\end{corollary}


\subsection{Character bounds}\label{sec:CharacterOI}
\begin{theorem}\label{thm:CharacterBounds}
Let $\mathbf H$ be a connected reductive group defined over a non-Archimedean local field $F$, let $\rk\, \mathbf H$ be the absolute rank of $\mathbf H$ and let $W$ be the absolute Weyl group of $\mathbf H$. For every open compact subgroup $U\subset \mathbf H(F)$ and every irreducible complex representation $\rho$ of $U$ we have $$|\chi_\rho(\gamma)|\leq |\Delta(\gamma)|_F^{-1}2^{\dim \mathbf H-{\rm rk}\, \mathbf H}|W|.$$
\end{theorem}
\begin{proof}
The proof is inspired by an unpublished note \cite{Larsen} where Michael Larsen proved an analogous result for finite groups of Lie type. To lighten the notation write $\chi:=\chi_\rho$ and extend it by $0$ to $\mathbf H(F)$. 
Following Serre\footnote{The formula is stated there for central simple division algebras but the same statement holds for any reductive algebraic group.} \cite[Formule (21)]{Serre78} we shall use the Weyl integration formula. We choose the unique Haar measure $dh$ on $\mathbf H(F)$ giving mass $1$ to $U$ and for any torus $\mathbf T$ we fix compatible measures $dx,dt$ on $\mathbf H(F)/\mathbf T(F), \mathbf T(F)$ in such a way that $U\cap \mathbf T(F)$ has measure $1$. For any torus $\mathbf T$ put $ W_\mathbf T=N(\mathbf T(F))/\mathbf T(F)$ and let $W$ be the absolute Weyl group. For any continuous compactly supported function $\phi$ on $\mathbf H(F)$ we have
\begin{equation}\label{eq:WeylForm}
\int \phi(g) dg=\sum_{[\mathbf T]_{\mathbf H(F)}\subset \mathbf H}\frac{1}{|W_{\mathbf T}|}\int_{\mathbf H(F)/\mathbf T(F)}\int_{\mathbf T(F)}|\Delta(t)|_F \phi(xtx^{-1})dt\,dx.
\end{equation}
The sum is taken over the set of $\mathbf H(F)$-conjugacy classes of maximal tori of $\mathbf H$ defined over $F$. Let $\mathbf T_0$ be the connected component of the centralizer of $\gamma$. Applying (\ref{eq:WeylForm}) we get 
\begin{align*}1=\int_U|\chi(h)|^2dh\geq& \frac{1}{|W_{\mathbf T_0}|}\int_{U/(\mathbf T_0(F)\cap U)}\left(\int_{U\cap \mathbf T_0(F)}|\Delta(t)|_{F}|\chi(t)|^{2}dt\right)dx\\
=&\frac{1}{|W_{\mathbf T_0}|}\int_{\mathbf T_0(F)\cap U }|\Delta(t)|_{F}|\chi(t)|^{2}dt.
\end{align*}
In particular 
\begin{equation}\label{e.Ineq11}
|W|\geq |W_{\mathbf T_0}|\geq \int_{\mathbf T_0(F)\cap U}|\Delta(t)|_F |\chi(t)|^2dt,
\end{equation}
We shall approximate $|\Delta(t)|_F$ by an integral combination of unitary characters of $\mathbf T_0(F)\cap U$. Let $\Phi$ be the root system of $\mathbf H$ relative to $\mathbf T_0$ and choose a set of positive roots $\lambda_1,\ldots,\lambda_{l}$ with $l=(\dim \mathbf H-{\rm rk}\,\mathbf H)/2$. For any $t\in \mathbf T_0(F)$
$$\Delta(t)=\prod_{i=1}^l(1-\lambda_i(t))(1-\lambda_i(t)^{-1}).$$
Let $E$ be the extension of $F$ generated by $\lambda_i(\gamma), i=1,\ldots,l$. The image of $\mathbf T_0(F)\cap U$ via any character $\lambda\in X^*(\mathbf T_0)$ is a compact subgroup of $E^\times$ so $\lambda_i(\mathbf T_0(F)\cap U)\subset \frak o_{E}^\times$ for $i=1,\ldots,l$. For any $i=1,\ldots,l$ we pick a unitary character $\theta_i\colon \frak o_{E}^\times\to \C^\times$ such that $\theta_i(\lambda_i(\gamma))\neq 1$ but $\theta_i(\lambda_i(t))=1$ is trivial on the subgroup defined by 
\begin{equation}\label{e.Condition11}
\{t\in \mathbf T_0(F)\cap U|\,  |1-\lambda_i(t)|_F<|1-\lambda_i(\gamma)|_F\}.
\end{equation}  Define $\Theta:\mathbf T_0(F)\cap U\to \C$ by 
$$\Theta(t)=\prod_{i=1}^l(1-\theta_i(\lambda_i(t))).$$
 $\Theta(\gamma)$ is a non-zero algebraic integer so there is a Galois automorphism that makes it least one in absolute value. Hence, by composing $\theta_i$'s with a Galois automorphism we can guarantee that $|\Theta(\gamma)|\geq 1$. 
Since $\theta_i$ are trivial on the subgroups (\ref{e.Condition11}) we have $\Theta(t)= 0$ for all $t$ with $|\Delta(t)|_F<|\Delta(\gamma)|_F$. It follows that $|\Theta(t)|^2/|\Delta(t)|_F \leq |\Theta(t)|^2/|\Delta(\gamma)|_F $. We combine this inequality with (\ref{e.Ineq11}) to get
\begin{equation}\label{e.Condition2}
\int_{\mathbf T_0(F)\cap U}|\Theta(t)|^2|\chi(t)|^2dt\leq \sup_{t\in \mathbf T_0(F)\cap U}|\Theta(t)|^2|\Delta(\gamma)|_F^{-1}|W|\leq 2^{2l}|\Delta(\gamma)|_F^{-1}|W|.
\end{equation}
By construction, $\Theta(t)\chi(t)$ is an integral combination of unitary characters of $\mathbf T_0(F)\cap U$. We can write 
\begin{equation*}
\Theta(t)\chi(t)=\sum_{\xi}c_\xi \xi(t) \textrm{ with }  
c_\xi\in\Z.
\end{equation*}
By (\ref{e.Condition2}) we have $\sum_{\xi}c_\xi^2\leq 2^{2l}|\Delta(\gamma)|_F^{-1}|W|$. In particular
\begin{equation*}\label{e.VB2}
|\chi(\gamma)|\leq \left|\Theta(\gamma)\chi(\gamma)\right|=\left|\sum_{\xi}c_\xi \xi(\gamma)\right|\leq\sum_{\xi}|c_\xi|\leq 2^{\dim \mathbf H-\rk \mathbf H}|\Delta(\gamma)|_F^{-1}|W|.
\end{equation*}
\end{proof}
From Theorem \ref{thm:CharacterBounds} and Corollary \ref{cor:LocalOIBound} we immediately deduce:
\begin{corollary}\label{cor:LocalCharacterOI} 
Let $U_\frak p =U_\frak p^i$ where $i\in\{0,1,2\}$. Then for every irreducible representation $\rho_\frak p$ of $U_\frak p$ we have 
$$\O_\gamma(|\chi_{\rho_\frak p}|)\leq \min\{\dim\rho_\frak p, 8|\Delta(\gamma)|_\frak p^{-1}\}\begin{cases} 3|\Delta(\gamma)|_\frak p^{-1/2}& {\rm if } |\Delta(\gamma)|_\frak p<1,\\
                                     1 & {\rm if} |\Delta(\gamma)|_\frak p=1.
                                     \end{cases}
$$
\end{corollary}
\

\subsection{Quasi-randomness and an estimate of $\O_\gamma(\mathbf 1_V)$}
In this section we use Corollary  \ref{cor:LocalCharacterOI} and a form of \textit{quasi-randomness} of the family of groups $ U_\frak p^i$ to establish Lemma \ref{lem:CharacterOI}. What is usually meant by quasi-randomness of a family of finite groups $\{H_j\}$ is the fact that there exists $c>0$ such that the non-trivial representations $\rho$ of $H_j$ satisfy $\dim \rho\geq |H_j|^c$. A bit weaker property holds for the family $U_\frak p^i$ as $\frak p$ varies among finite places of $k$. 
\begin{lemma}\label{lem:QuasiRand}
Let $q$ be the cardinality of the residue field of $k_\frak p$ and let $\rho$ be an irreducible representation of $U_\frak p^i$ where $i=0,1,2.$
\begin{enumerate}
 \item If $i=0$, then $\dim \rho=1$ or $\dim \rho\geq q-1$.
 \item If $i=1$, then $\dim \rho\leq 2$ or $\dim\rho\geq q-1$.
 \item If $i=2$, then $\dim\rho\leq 2$ or $\dim \rho\geq q+1.$
\end{enumerate}
\end{lemma}
\begin{proof}
 (1) is a special case of \cite[Theorem 1]{Bardestani14}. Let $K$ be the image of $\begin{pmatrix}1+\frak p& \frak o_\frak p\\ \frak p & 1+\frak p \end{pmatrix}$ in $\PGL(2,k_\frak p)$. It is a normal pro-$p$ subgroup (where $p$ is the residual characteristic of $k_\frak p$) of $U_\frak p^1$. Matrix computations show that $K$ is the normal closure of $N:=\begin{pmatrix}1& \frak o_\frak p\\ 0 & 1\end{pmatrix}$ in $U_\frak p^1$. We deduce that either $\rho|_K$ is trivial or $\rho|_N$ contains a non-trivial character $\psi$ of $N$. In the former case $\rho$ factors through the dihedral group $U_\frak p^1/ K\simeq D_{q-1}$. In that case $\dim \rho\leq 2$ follows from the explicit description of irreducible representations of dihedral groups. Assume now that $\rho$ contains a non-trivial character $\psi$ of $N$. Let $A=\begin{pmatrix}\frak o_\frak p^\times& 0\\0 & \frak o_\frak p^\times \end{pmatrix}$. Then $A$ normalizes $N$ and the orbit of $\psi$ under conjugation by $A$ has at least $q-1$ elements. Accordingly, Clifford's theorem yields $\dim \rho\geq q-1$. This proves (2). In the last case $U_\frak p^2=\mathbf G(k_\frak p)$ so we can read the possible dimensions of $\rho$ from \cite[Proposition 6.5]{Cara84}. The lowest ones are $1,2$ followed by $q+1$, which proves (3).
\end{proof}

We can start proving Lemma \ref{lem:CharacterOI}.
\begin{proof}[Lemma \ref{lem:CharacterOI}] Let $\rho$ be an irreducible representation of $U$. Then $\rho=\prod_{\frak p}\rho_\frak p$ where $\rho_\frak p$ is an irreducible representation of the local factor $U_\frak p$ and $\rho_\frak p=1$ for almost all primes $\frak p$. Let $ Q_1:=\{\frak p|\, |\Delta(\gamma)|_\frak p<1\}, Q_2:=\{\frak p|\, 8|\Delta(\gamma)|_\frak p^{-1}> \dim\rho_\frak p\}.$ Using Corollary \ref{cor:LocalCharacterOI} we get 
\begin{align}\label{eq:I1I2Decomp}\nonumber\O_\gamma(|\chi_\rho|)=&\prod_{\frak p}\O_\gamma(|\chi_\rho|)\leq 3^{|Q_1|}|\N_{k/\Q}(\Delta(\gamma))|^{1/2}\prod_{\frak p\in Q_2}\dim\rho_\frak p\prod_{\frak p\not\in Q_2}8|\Delta(\gamma)|_\frak p^{-1}\\ 
\leq & |\N_{k/\Q}(\Delta(\gamma))|^{3/2}3^{|Q_1|}\prod_{\frak p\in Q_2}(\dim\rho_\frak p|\Delta(\gamma)|_\frak p)\prod_{\frak p\not\in Q_2}8.
\end{align}
Put $I_1:=|\N_{k/\Q}(\Delta(\gamma))|^{3/2}3^{|Q_1|}, I_2=\prod_{\frak p\in Q_2}(\dim\rho_\frak p|\Delta(\gamma)|_\frak p)\prod_{\frak p\not\in Q_2}8$. We have 
$|Q_1|\leq \log |\N_{k/\Q}(\Delta(\gamma))|/\log 2$ so $I_1\leq |\N_{k/\Q}(\Delta(\gamma))|^{3/2+\log 3/\log 2}\leq |\N_{k/\Q}(\Delta(\gamma))|^{4}$. We turn to $I_2$
\begin{align}\label{eq:I2Bound}
\nonumber I_2\leq& \prod_{\frak p\in Q_1}8\prod_{\dim\rho_\frak p<8}\dim \rho_\frak p\prod_{\dim\rho_\frak p\geq 8}8\leq 2^{3|Q_1|}\prod_{\dim\rho_\frak p<64}\dim\rho_\frak p\prod_{\dim\rho_\frak p\geq 64}(\dim\rho_\frak p)^{1/2}\\
=& 2^{3|Q_1|}(\dim\rho)^{1/2}\left(\prod_{\dim\rho_\frak p<64}\dim\rho_\frak p\right)^{1/2}.
\end{align}
We bound the product of low dimensions using Lemma \ref{lem:QuasiRand}. Indeed, by Lemma \ref{lem:QuasiRand} the representation $\rho_\frak p$ with dimension $>1$ and at most $64$ can occur only if $\N(\frak p)<65$ or $\dim\rho_\frak p=2$ and $\frak p\in S\cup \Ram_f A$. Hence $$\prod_{\dim\rho_\frak p<64}\dim\rho_\frak p\leq \prod_{\N(\frak p)<65}64\prod_{\frak p\in S\cup \Ram_f A}2\leq 2^{6\pi_k(64)+|S|+|\Ram_f A|}.$$ We plug it into (\ref{eq:I2Bound}) to get
$$I_2\leq |\N_{k/\Q}(\Delta(\gamma))|^3(\dim \rho)^{1/2} 2^{3\pi_k(64)+|S|/2+|\Ram_f A|/2}.$$ Finally, we combine the bounds on $I_1,I_2$ and (\ref{eq:I1I2Decomp})
$$\O_\gamma(|\chi_\rho|)\leq I_1I_2\leq |\N_{k/\Q}(\Delta(\gamma))|^{7} (\dim\rho)^{1/2}2^{3\pi_k(64)+|S|/2+|\Ram_f A|/2}.$$
Lemma \ref{lem:CharacterOI} is proved.
\end{proof}

\subsection{Representation zeta functions}\label{sec:RepZetaBound}
In this section we give explicit upper bounds on the twist representation zeta functions (Definition \ref{def:TwistRepZeta}) of $U_\frak p^i,i=0,1,2$ and deduce a bound on $\zeta_U^*(s)$. We recall that in our notation $U=\prod_\frak p U_\frak p$ where $U_\frak p\simeq U_\frak p^1$ if $\frak \in S$, $U_\frak p\simeq U_\frak p^2$ if $\frak p\in \Ram_f A$ and $U_\frak p\simeq U_\frak p^0$ otherwise. 
\begin{lemma}\label{lem:TwistRepZetaBound} We have 
$\zeta_U^*(4)\leq \zeta_k(2)^{24}162^{|S|}2^{|\Ram_f A|}\prod_{\frak p\in S}(\N(\frak p)+1)^3\prod_{\frak p\in \Ram_f A}(N(\frak p)^2+1).$
\end{lemma}
\begin{proof}
$$U=\prod_\frak p U_\frak p\simeq \prod_{\frak p\not\in (S\cup \Ram_f A)}U_\frak p^0\times\prod_{\frak p\in S}U_\frak p^1\times \prod_{\frak p\in \Ram_f A}U_\frak p^2,$$ so $$\zeta_{U_\frak p}^*(4)=\prod_{\frak p\not\in (S\cup \Ram_f A)}\zeta_{U_\frak p^0}^*(4)\prod_{\frak p\in S}\zeta_{U_\frak p^1}^*(4)\prod_{\frak p\in \Ram_f A}\zeta_{U_\frak p^2}^*(4).$$ Desired inequality follows immediately from Lemma \ref{lem:LocRepZetaBound}.
\end{proof} Lemma \ref{lem:RepzetaBound} follows upon noting that $\zeta_U^*(s)$ is decreasing in $s$. 
The remainder of this section is devoted to the bounds on $\zeta^*_{U_\frak p^i}(s)$. 
\begin{lemma}\label{lem:LocRepZetaBound} Let $q$ be the cardinality of the residue field of $k_\frak p$. For every $s\geq 2$:
\begin{enumerate}
 \item $\zeta_{U_\frak p^0}^*(s)\leq (1-q^{2-s})^{-3\cdot 2^{s-1}}$. 
 \item $\zeta_{U_\frak p^1}^*(s)\leq 2\cdot 3^s(q+1)^3(1-q^{2-s})^{-1}.$
 \item $\zeta_{U_\frak p^2}^*(s)\leq 2(q^2+1)(1-q^{2-s})^{-1}.$
\end{enumerate}
\end{lemma}
 Before the proof, few remarks are in order. The twist representation zeta functions of $\GL(2,\frak o_\frak p)$ were studied in \cite{HasaStasinski2018} by H\"as\"a and Stasinski. They give an exact formula for $\zeta^*_{\GL(2,\frak o_\frak p)}$ (c.f. \cite[Theorem 4.14]{HasaStasinski2018}) under the assumption that $q$ is odd. They also give  bounds on $\zeta_{\GL(2,\frak o_\frak p)}^*(s)$ for even $q$. They suffice for establishing the axis of convergence of $\zeta_{\GL(2,\frak o_\frak p)}^*(s)$, which was the goal in \cite{HasaStasinski2018}. Unfortunately, the implicit constants in these bounds depend on the ramification index of $k_\frak p$ which would lead to a substantially weaker bound in Lemma \ref{lem:TwistRepZetaBound}. For odd $q$ the twist representation zeta function of $\zeta^*_{U_\frak p^0},\zeta^*_{U_\frak p^1}$ could be computed, or at least estimated using the list of irreducible representations of $U_{\frak p}^0,U_\frak p^1$  given by Silberger in \cite{Silberger70},\cite{Silberger77}. In our setup we cannot ignore the even primes, so we are forced to estimate the local zeta functions by hand.
\begin{proof}
\begin{enumerate}
\item 
We will identify $\Irr U_\frak p^0= \Irr \PGL(2,\frak o_\frak p)$ with the subset of $\Irr \GL(2,\frak o_\frak p)$ of representations with trivial central character. For any $r\geq 1$ let $K^0_r:=1+\frak p^r M(2,\frak o_\frak p)\subset \GL(2,\frak o_\frak p)$. We say that a non-trivial representation $\rho\in\Irr \GL(2,\frak o_\frak p)$ is of level $r$ if it factors through $\GL(2,\frak o_\frak p)/K^0_r$ but not $\GL(2,\frak o_\frak p)/K^0_{r-1}$. In that case we write $\ell(\rho)=r$ and we adopt the convention that $\ell(1)=0$. An irreducible representation $\rho\in\Irr \GL(2,\frak o_\frak p)$ will be called \textit{primitive} if $\ell(\rho)\leq \ell(\rho\otimes \theta)$ for every one dimensional character $\theta$ that factors through the determinant map. According to \cite[Lemma 4.4]{HasaStasinski2018} for every primitive $\rho$ we have \begin{equation}\label{eq:DimLB}\dim \rho\geq (q-1)q^{\ell(\rho)-1}.\end{equation} 
Let $W^0$ be the set of primitive irreducible representations of $\GL(2,\frak o_\frak p)$. Let $N_r^0:=|\{\rho\in W^0| \ell(\rho)=r\}|$. By (\ref{eq:DimLB}) we have $$N^0_r\leq \frac{|\GL(2,\frak o_\frak p)/K_r^0|}{ (q-1)^2q^{2r-2}}=(q+1)q^{2r-1}.$$ Every $\rho\in \Irr\PGL(2,\frak o_\frak p)$ either factors through the determinant or it admits a twist $\rho\otimes\theta$ which is primitive of positive level. Therefore 
\begin{align*}\zeta_{\PGL(2,\frak o_\frak p)}^*(s)\leq& 1+\sum_{r=1}^\infty \sum_{\substack{\rho\in W^0\\ \ell(\rho)=r}}(\dim\rho)^{-s}\leq 1+\sum_{r=1}^\infty \frac{(q+1)q^{2r-1}}{(q-1)^sq^{(r-1)s}}\leq 1+3\cdot 2^{s-1}\sum_{r=1}^\infty  q^{r(2-s)}\\
\leq& (1-q^{2-s})^{-3\cdot 2^{s-1}}.
\end{align*} 
\item   Let $P=\begin{pmatrix}\frak o_\frak p^\times  & \frak o_\frak p\\ \frak p & \frak o_\frak p^\times\end{pmatrix}$ and let $K_r^0$ be as in the previous point. For $\rho\in \Irr P$ non-trivial we define the level $\ell(\rho)$ to be the smallest $r\geq 1$ such that $\rho$ factors through $P/K^0_r$. We say that $\rho$ is primitive if $\ell(\rho)\leq \ell(\rho\otimes\theta)$ for every one dimensional character $\theta$ that factors through the determinant map. We claim that \begin{equation}\label{eq:DimLB2}\dim\rho\geq \frac{(q-1)q^{\ell(\rho)-1}}{q+1}.\end{equation}Indeed, let $\rho_1\in \Irr \GL(2,\frak o_\frak p)$ be any irreducible representation containing $\rho$. Then $\ell(\rho_1)\geq \ell(\rho)$ so by (\ref{eq:DimLB}) we have 
$\dim \rho_1\geq (q-1)q^{\ell(\rho)-1}$. Desired inequality follows because $\dim\rho_1\leq \dim \Ind_{P}^{\GL(2,\frak o_\frak p)}\rho=(q+1)\dim\rho$.

 Let $W^1$ be the set of primitive irreducible representations $\rho\in \Irr P$. Let $N^1_r:=|\{\rho\in W^1|\ell(\rho)=r\}|$. We have $$N_r^1\leq \frac{|P/K_r^0|(q+1)^2}{(q-1)^2q^{2r-2}}=\frac{q(q-1)^2q^{4r-4}(q+1)^2}{(q-1)^2q^{2r-2}}= q(q+1)^2q^{2r-2}.$$ The lower bound (\ref{eq:DimLB2}) is useless for representations of level $1$. Instead, we describe them explicitly: $P/K_1^1\simeq \F_q\rtimes (\F_q^\times)^2$ where $(\F_q^\times)^2$ acts on $\F_q$ by $(a,b)x=ab^{-1}x$. This group has $(q-1)^2$ one dimensional representations and $q-1$ irreducible representations of dimension $q-1$. We use (\ref{eq:DimLB2}) and the bound on $N_r^1$ to obtain 
\begin{align*}
 \sum_{\rho\in W^1}\dim\rho^{-s}\leq& (q-1)^2 + (q-1)(q-1)^{-s}+\sum_{r=2}^\infty \sum_{\substack{\rho\in W^1 \\ \ell(\rho)=r}}(\dim \rho)^{-s}\\
 \leq& (q-1)^2+(q-1)^{1-s}+\frac{q(q+1)^{2+s}}{(q-1)^s}\sum_{r=1}^\infty q^{r(2-s)}\leq 3^s(q+1)^3(1-q^{2-s})^{-1}.
\end{align*}
It remains to compare the above series with $\zeta_{U_\frak p^1}^*(s)$. The group $P/\frak o_\frak p^\times$ is a normal index $2$ subgroup of $U_\frak p^1$ and the conjugation by $U_\frak p^1$ preserves the levels of irreducible representations. The determinant map on $P$ extends to $U_\frak p^1\frak o_\frak p^\times$ so every irreducible representation $\rho_0\in\Irr U_\frak p^1$ has a twist $\rho_0\otimes \theta$, where $\theta$ factors through the determinant map, such that the irreducible components of $\rho_0|_{P/\frak o_\frak p^\times}$ are primitive as representations of $P$, hence are in $W^1$. Moreover, every $\rho\in W^1$ appears in at most two irreducible representations of $U_\frak p^1$. We infer 
$$\zeta_{U_\frak p^1}^*(s)\leq 2\sum_{\rho\in W^1}\dim\rho^{-s}\leq 2\cdot 3^s(q+1)^3(1-q^{2-s})^{-1}.$$

\item In that case $U_\frak p^2=\mathbf G(k_\frak p)=A_\frak p^\times/ k_\frak p^\times.$ Let $\O_\frak p=\{x\in A_\frak p|\, |\n(x)|_\frak p\leq 1\}$, it is the maximal compact subring of $A_\frak p$. Let $\frak P=\{x\in \O_\frak p|\, |\n(x)|_\frak p<1\}$ be the Jacobson radical of $\O_\frak p$. We define the groups $K_0^2:=\O_\frak p^\times$ and $K_r^2:=1+\frak P^r, r\geq 1$. We define the level $\ell(\rho)$ of $\rho\in \Irr A_\frak p^\times$ as the smallest $r$ such that $\rho$ factors through $A_\frak p^\times/ K_r^2$. Note that this time we can have non-trivial representations of level $0$. The \textbf{conductor} $c(\rho)$ of $\rho$ is given by the relation $c(\rho)=\ell(\rho)+1$ (c.f. \cite[p. 213]{Cara84}). We note that all irreducible representations of level $0$ are one dimensional. Following Carayol we say that $\rho$ is \textit{ of minimal conductor } if $c(\rho)\leq c(\rho\otimes \theta)$ for every character $\theta$ that factors though the norm map. To be consistent with nomenclature of previous points we will say that the representations of minimal conductor are primitive. The dimension formula \cite[Proposition 6.5]{Cara84}, translated in terms of levels, tells us that if $\rho\in \Irr A_\frak p^\times$ is primitive of positive level, then
$$\dim \rho=\begin{cases}(q+1)q^{\frac{\ell(\rho)-2}{2}} &\textrm{ if } \ell(\rho) \textrm{ is even},\\
             2q^{\frac{\ell(\rho)-1}{2}} &\textrm{ if } \ell(\rho) \textrm{ is odd.}
            \end{cases}$$
In particular $\dim\rho\geq q^{\lfloor \frac{\ell(\rho)}{2}\rfloor}$ for every primitive $\rho$.  Choose a uniformizer $\varpi$ of $k_\frak p$. Let $W^2$ be the set of positive level irreducible primitive representations of $A_\frak p^\times$ that are trivial on the group $I:=\langle \varpi\rangle$. For $r\geq 0$ let $N_r^2$ be the number of $\rho\in W^2$ with $\ell(\rho)=r$. We bound the numbers $N_r^2$ using the lower bound on dimensions: 
$$N_r^2\leq \frac{|A_\frak p^\times/IK_r^2|}{q^{2\lfloor r/2\rfloor}}\leq \frac{2(q^2-1)q^{2r-2}}{q^{2\lfloor r/2\rfloor}}<2q^{2r-2\lfloor r/2\rfloor}.$$
It follows that 
$$\sum_{\rho\in W^2}\dim\rho^{-s}\leq \sum_{r=1}^\infty \frac{2q^{2r-2\lfloor r/2\rfloor}}{q^{s\lfloor r/2\rfloor}}=2\sum_{r=1}^\infty q^{r(2-s)}+2q^2\sum_{r=0}^\infty q^{r(2-s)}.$$
Every irreducible representation $\rho$ of $U_\frak p^2$ is either one dimensional or admits a twist $\rho\otimes \theta\in \Irr A_\frak p^\times$ which is primitive and trivial on $I$. Therefore 
$$\zeta_{U_\frak p^2}^*(s)\leq 1+\sum_{\rho\in W^2}\dim\rho^{-s}\leq 2+2\sum_{r=1}^\infty q^{r(2-s)}+2q^2\sum_{r=0}^\infty q^{r(2-s)}=2(1+q^2)(1-q^{2-s})^{-1}.$$
\end{enumerate}
\end{proof}
\subsection{Abelianization of $U$}\label{sec:AbelianizationBound}
Let $V$ be an open subgroup of $U$. As before $U$ is a maximal compact open subgroup of $\mathbf G(\mathbb A_f)$ with product decomposition $U=\prod_{\frak p}U_\frak p$ and $S:=\{\frak p| U_\frak p\sim U_\frak p^1\}$. In this section we prove Lemma \ref{lem:AbelianizationBound}. Let us set up some notations relevant to this part. Let $j\colon U\to U^{\rm ab}$ be the abelianization map. The norm map $\n:A^\times\to k^\times$ induces maps $\n\colon \mathbf G(R)\to R^\times/(R^\times)^2$ for any $k$-algebra $R$. Throughout this section $\n$ will refer to these induced maps. We shall write $U^1,V^1$ and $(U_\frak p^i)^1, i=0,1,2$ for the kernel of $\n$ on $U,V$ and $U_\frak p^i,i=0,1,2$ respectively. The norm map factors through the abelianization and we will write $\n^{\rm ab}(j(u)):=\n(u)$ for $u\in U$. We use the same letter to denote the restriction of any of these maps to the local components $U_\frak p$. 

\begin{lemma}\label{lem:LocAbel}
Let $i\in\{0,1,2\}$ be such that $U_\frak p$ is conjugate to $U_\frak p^i$. We have 
$$\n(U_\frak p)=\begin{cases}
                 \frak o_\frak p^\times/(\frak o_\frak p^\times)^2 & \textrm{ if } i=0,\\
                 k_\frak p^\times/(k_\frak p^\times)^2 & \textrm{ if } i=1,2,\\
                \end{cases} \ \ 
j((U_\frak p)^1)=\begin{cases}
                 \Z/2\Z & \textrm{ if } i=0 \textrm{ and } \N(\frak p)=2,\\
                 \{1\} & \textrm{ otherwise }.\\
                \end{cases}
$$                
\end{lemma}\begin{proof} The part on $\n(U_\frak p)$ is clear. Let $\F_q$ be the residue field of $k_\frak p$. If $i=0$, then the commutator subgroup always contains the kernel of the reduction map mod $\frak p$ so $j((U_\frak p)^1)\simeq \PSL(2,\F_q)^{\rm ab}$. The group $\PSL(2,\F_q)$ is perfect unless $q=2$ in which case the abelianization is $\Z/2\Z$. If $i=1$ we verify by hand that the subgroups $\begin{pmatrix}1 & \frak o_\frak p\\ 0 & 1 \end{pmatrix}, \begin{pmatrix}1 & 0\\ \frak p & 1 \end{pmatrix}$ and $\left\{\begin{pmatrix}a & 0\\ 0 & a^{-1} \end{pmatrix}\mid a\in \frak o_\frak p^\times\right\}$ are in the commutator subgroup. They generate $(U_\frak p^1)^1$, whence $j((U_\frak p^1)^1)=1$. By \cite[1.4.3]{PlaRap} $(U_\frak p^2)^1$ is the commutator subgroup of $U_\frak p^2$ so we also have $j((U_\frak p^2)^1)=1$. 
\end{proof}

\begin{proof}[Lemma \ref{lem:AbelianizationBound}]
The proof consists of two applications of snake lemma and the Dirichlet's unit theorem. First we shall bound $[j(U):j(V)]$ in terms of $[n(U):n(V)]$. Consider the following exact diagram
$$\begin{tikzcd}  1\arrow[r]& j(V^1)\arrow[r]\arrow[d]& j(V)\arrow
[r, "\n^{\rm ab}"]\arrow[d] & \n(V)\arrow[r]\arrow[d]&1 \\  
1\arrow[r]& j(U^1)\arrow[r]& j(U)\arrow
[r, "\n^{\rm ab}"] & \n(U)\arrow[r]&1   ,
  \end{tikzcd}
$$ where columns are induced by the inclusion maps. Snake lemma yields an exact sequence $1\to j(U^1)/j(V^1)\to j(U)/j(V)\to n(U)/n(V)$. By Lemma \ref{lem:LocAbel} we get \begin{equation}\label{eq:Jindex1} [j(U):j(V)]\leq |j(U^1)|[\n(U):\n(V)]\leq 2^{\pi_k(2)}[\n(U):\n(V)]\leq 2^{[k:\Q]}[\n(U):\n(V)].\end{equation}
We proceed to bound $[n(U):n(V)]$. Let $W_U,W_V$ be the images of $\n(U),\n(V)$ in $\mathbb A_f^\times/ (\mathbb A_f^\times)^2 k^\times$. We need to estimate the size of $\ker[\n(U)\to W_U]$. Write $\frak o_1$ for the ring of $(S\cup \Ram_f A)$-integers in $k$ and let $\overline {\frak o_1}$ be its weak closure in $\mathbb A_f$. By Lemma \ref{lem:LocAbel} we recognize $\overline{\frak o_1}^\times/(\overline{\frak o_1}^\times)^2$ as $\n(U)$. The map $\n(U)\to \mathbb A_f^\times/ (\mathbb A_f^\times)^2 k^\times$ is a composition  
$$\begin{tikzcd}\n(U)=\overline{\frak o_1}^\times/(\overline{\frak o_1}^\times)^2\arrow[r,"\alpha"]& (\overline{\frak o_1}^\times/\frak o_1^\times(\overline{\frak o_1}^\times)^2)\arrow[r,"\beta"]&\mathbb A_f^\times/(\mathbb A_f^\times)^2k^\times\end{tikzcd}$$
In our setup $k$ is either quadratic or has at least one real embedding, so the $2$-torsion part of $\frak o_k^\times$ is at most $\Z/2\Z$. Hence, by Hasse-Minkowski's local global principle and Dirichlet's unit theorem $|\ker \alpha|=|\frak o_1^\times/ (\frak o_1^\times)^2|\leq 2^{[k:\Q]+|S|+|\Ram_f A|}$. 
Put $C:=\mathbb A_f^\times/\overline{\frak o_1}^\times k^\times$. We have an exact sequence
$$\begin{tikzcd} {\rm Tor}(C,\Z/2\Z)\arrow[r]& (\overline{\frak o_1}^\times/\frak o_1^\times(\overline{\frak o_1}^\times)^2)\arrow[r,"\beta"]&\mathbb A_f^\times/(\mathbb A_f^\times)^2k^\times\arrow[r]& C/C^2\arrow[r]&1.\end{tikzcd}$$
Therefore $|\ker \beta|\leq |{\rm Tor}(C,\Z/2\Z)|=|C/C^2|$. The group $C$ is a quotient of the class group of $k$ so $|\ker\beta|\leq |\cl(k)/\cl(k)^2|$. We deduce that 
$$|\ker[\n(U)\to W_U]|=|\ker\alpha||\ker\beta|\leq 2^{[k:\Q]+|S|+|\Ram_f A|}|\cl(k)/\cl(k)^2|.$$ Therefore

\begin{equation}\label{eq:NormIndex1}
[n(U):n(V)]\leq |\cl(k)/\cl(k)^2|2^{[k:\Q]+|S|+|\Ram_f A|} [W_U:W_V]
\end{equation}
The group $W_U$ fits into the exact sequence $1\to W_U\to \mathbb A_f^\times/ (\mathbb A_f^\times)^2k^\times\to \cl(U)\to 1$. Indeed, we have shown this in the proof of Lemma \ref{lem:ClassGroup}. Same statement holds for $W_V$.
To estimate $[W_U:W_V]$ we consider the following extact diagram
$$\begin{tikzcd}  1\arrow[r]& W_V \arrow[r]\arrow[d]& \mathbb A_f^\times/ (\mathbb A_f^\times)^2k^\times \arrow
[r]\arrow[d] & \cl(V)\arrow[r]\arrow[d]&1 \\  
1\arrow[r]& W_U\arrow[r]& \mathbb A_f^\times/ (\mathbb A_f^\times)^2k^\times\arrow
[r] & \cl(U)\arrow[r]&1.   
  \end{tikzcd}
$$
Upon applying snake lemma we find that $[W_U:W_V]=\frac{|\cl(V)|}{|\cl(U)|}.$ We combine this with (\ref{eq:Jindex1}) and (\ref{eq:NormIndex1}) to get 
$$[j(U):j(V)]\leq 2^{2[k:\Q]+|S|+|\Ram_f A|}|\cl(k)/\cl(k)^2|\frac{|\cl(V)|}{|\cl(U)|}.$$ Lemma \ref{lem:AbelianizationBound} is proven.
\end{proof}
\section{Volume computations}
\subsection{Co-volumes of congruence lattices}\label{sec:Covolumes}
In this section we give a volume formula for $\Vol(\mathbf G(k)\bs \mathbf G(\mathbb A))$ in a form that will be useful in the proof of Theorem \ref{mthm:TraceEstimate}. We shall deduce it from the Borel volume formula \cite{Bor81} (see also \cite[Chapter 11]{MaRe03}). Let $\O\subset A$ be an order in the quaternion algebra $A$ (c.f. \cite[2.2]{MaRe03}). Write $\Gamma_{\O^1}$ for the lattice of $\PGL(2,\K)$ obtained as the image of $\O^1$ under the map $A^\times\to A^\times_{\nu_1}\to \mathbf G(k_{\nu_1})\simeq \PGL(2,\K).$ For a maximal order $\O$ the  Borel volume formula \cite[11.1.1, 11.1.3]{MaRe03} yields 
\begin{align}
\Vol(\H^2/\Gamma_{\O^1})=&\frac{8\pi\Delta_k^{3/2}\zeta_k(2)\prod_{\frak p\in \Ram_f A}(\N\frak p-1)}{(4\pi^2)^{[k:\Q]}} &\textrm{ if } \K=\R,\\ 
\Vol(\H^3/\Gamma_{\O^1})=&\frac{4\pi^2\Delta_k^{3/2}\zeta_k(2)\prod_{\frak p\in \Ram_f A}(\N\frak p-1)}{(4\pi^2)^{[k:\Q]}} &\textrm{ if } \K=\C. 
\end{align}
We shall derive the corresponding formulas for the $\Vol(\PGL(2,\K)/\Gamma_V)$ where $V\subset \mathbf G(\mathbb A_f)$ is an open compact subgroup. As in previous section we fix a maximal open compact subgroup $U=\prod_{\frak p}U_\frak p\subset \mathbf G(\mathbb A_f)$ and put $S=\{\frak p| U_\frak p\sim U_\frak p^1\}$. We start by establishing a volume formula for $\Gamma_U$ and next we deduce one for $\Gamma_V$ for $V\subset U$. 
\begin{lemma}\label{lem:MaxCovol} With $U$ as above we have:
\begin{align*}
\Vol(\H^2/\Gamma_U)=&\frac{4\pi\Delta_k^{3/2}\zeta_k(2)\prod_{\frak p\in \Ram_f A}\frac{\N\frak p-1}{2}\prod_{\frak p\in S}\frac{\N\frak p+1}{2}}{|\cl(U)|(4\pi^2)^{[k:\Q]}} &\textrm{ if } \K=\R,\\ 
\Vol(\H^3/\Gamma_U)=&\frac{2\pi^2\Delta_k^{3/2}\zeta_k(2)\prod_{\frak p\in \Ram_f A}\frac{\N\frak p-1}{2}\prod_{\frak p\in S}\frac{\N\frak p+1}{2}}{|\cl(U)|(4\pi^2)^{[k:\Q]}} &\textrm{ if } \K=\C. 
\end{align*}
\end{lemma}
\begin{proof}
If $U_\frak p^g$ equals $U_\frak p^0,U_\frak p^1,U_\frak p^2$ for some $g\in \mathbf G(k_\frak p)$ define  $\O_{U_\frak p}^g$ as $\begin{pmatrix} \frak o_\frak p & \frak o_\frak p\\ 
\frak o_\frak p & \frak o_\frak p\end{pmatrix}, \begin{pmatrix} \frak o_\frak p & \frak o_\frak p\\ \frak p & \frak o_\frak p\end{pmatrix}, \{x\in A_\frak p|\, |\n(x)|_\frak p\leq 1\}$ respectively. Define an order $\O_U:=A\cap \prod_{\frak p}\O_{U_\frak p}$. Choose a maximal order $\O$ containing $\O_U$ and let $\overline{\mathcal O}=\prod_{\frak p}\O_\frak p$ be the weak closure in $A(\mathbb A_f)$. At each finite place $\frak p\not\in S$ we have $\O_\frak p=\O_{U_\frak p}$ and for $\frak p \in S$ the pair $\O_{U_\frak p}\subset \O_\frak p$ is conjugate to $\begin{pmatrix} \frak o_\frak p & \frak o_\frak p\\ \frak p & \frak o_\frak p\end{pmatrix}\subset \begin{pmatrix}\frak o_\frak p & \frak o_\frak p\\ \frak o_\frak p & \frak o_\frak p\end{pmatrix}$. By the strong approximation theorem \cite[7.7.5]{MaRe03} the map $ \O^1/\O_U^1\to \overline{\O}^1/ \overline{\O_U}^1\simeq \prod_{\frak p\in S}\O_\frak p^1/\O_{U_\frak p}^1$ is bijective, so $[\Gamma_{\O^1}:\Gamma_{\O_U^1}]=[\O^1:\O_U^1]=\prod_{\frak p\in S}(\N\frak p+1).$  Therefore 
\begin{align}
\Vol(\H^2/\Gamma_{\O_U^1})=&\frac{8\pi\Delta_k^{3/2}\zeta_k(2)\prod_{\frak p\in \Ram_f A}(\N\frak p-1)\prod_{\frak p\in S}(\N\frak p+1)}{(4\pi^2)^{[k:\Q]}} &\textrm{ if } \K=\R,\\ 
\Vol(\H^3/\Gamma_{\O_U^1})=&\frac{4\pi^2\Delta_k^{3/2}\zeta_k(2)\prod_{\frak p\in \Ram_f A}(\N\frak p-1)\prod_{\frak p\in S}(\N\frak p+1)}{(4\pi^2)^{[k:\Q]}} &\textrm{ if } \K=\C. 
\end{align}
By construction $\Gamma_{\O_U^1}\subset \Gamma_U$ and $\mathcal O_{U_\frak p}\subset U_\frak p k_p^\times.$
It remains to compute the index $[\Gamma_U:\Gamma_{\O^1_U}]$. 
We first compute it when $S=\emptyset$ and then deduce the volume formulas in general case. If $S=\emptyset$, then $\O=\mathcal O_U$ and $\Gamma_U$ is a maximal lattice. We know that $\Gamma_U$ normalizes $\Gamma_{\O^1_U}$ so it must be equal to the normalizer of $\Gamma_{\O^1_U}$. The index of $\Gamma_{\O^1_U}$ in the normalizer was computed by Borel \cite[8.4,8.5,8.6]{Bor81}. Following the notation of \cite[11.6]{MaRe03} we have:
$$[\Gamma_U:\Gamma_{\O^1_U}]=[R^\times_{f,\infty}:(R^\times_f)^2][_2J_1:J_2].$$
The group $J_1$ is $\mathbb A_f^\times/ k_A^\times \overline{\frak o}^\times \prod_{\frak p\in \Ram_f A}k_\frak p$ and $_2J_1$ is the subgroup of its $2$-torsion elements. We skip the description of $R_{f,\infty},J_2$ since by \cite[Lemma 2.1]{ChFr86} (where the same notation is used) we have 
$$[R^\times_{f,\infty}:(R^\times_f)^2][_2J_1:J_2]=2^{|\Ram_f A|+a+r_2}|J_1/J_1^2|,$$ where $a$ is the number of real places of $k$ where $A$ is split and $r_2$ is the number of complex places of $k$. In our setting $a+r_2=1$ so using Lemma \ref{lem:ClassGroup} we get 
$$[\Gamma_U:\Gamma_{\O^1_U}]=2^{|\Ram_f A|+1}|J_1/J_1^2|=2^{|\Ram_f A|+1}\left|\mathbb A_f^\times/ (\mathbb A_f^\times)^2k_A^\times \overline{\frak o}^\times \prod_{\frak p\in \Ram_f A}k_\frak p^\times\right|=2^{|\Ram_f A|+1}|\cl(U)|.$$ Therefore, in the case $S=\emptyset$ we have 
\begin{align}
\label{eq:VolFormMax2}\Vol(\H^2/\Gamma_U)=&\frac{4\pi\Delta_k^{3/2}\zeta_k(2)\prod_{\frak p\in \Ram_f A}\frac{\N\frak p-1}{2}}{|\cl(U)|(4\pi^2)^{[k:\Q]}} &\textrm{ if } \K=\R,\\ 
\label{eq:VolFormMax3}\Vol(\H^3/\Gamma_U)=&\frac{2\pi^2\Delta_k^{3/2}\zeta_k(2)\prod_{\frak p\in \Ram_f A}\frac{\N\frak p-1}{2}}{|\cl(U)|(4\pi^2)^{[k:\Q]}} &\textrm{ if } \K=\C. 
\end{align}
In general case when $S$ is non-empty we find a maximal compact subgroup $U'=\prod_{\frak p}U'_\frak p$ where $U_\frak p'\sim U_\frak p^0$ for all $\frak p\not \in \Ram_f A$ and put $V=U\cap U'$.   By Lemma \ref{lem:LatticeIndex} have $$\frac{[\Gamma_{U'}:\Gamma_V]}{[\Gamma_{U}:\Gamma_V]}=\frac{|\cl(U')|}{|\cl(U)|}\frac{[U':V]}{[U:V]}=\frac{|\cl(U')|}{|\cl(U)|}\prod_{\frak p\in S}\frac{\N\frak p+1}{2}.$$
The formulas (\ref{eq:VolFormMax2},\ref{eq:VolFormMax3}) describe the covolume of $\Gamma_{U'}$ so we get
\begin{align*}
\Vol(\H^2/\Gamma_U)=&\frac{4\pi\Delta_k^{3/2}\zeta_k(2)\prod_{\frak p\in \Ram_f A}\frac{\N\frak p-1}{2}\prod_{\frak p\in S}\frac{\N\frak p+1}{2}}{|\cl(U)|(4\pi^2)^{[k:\Q]}} &\textrm{ if } \K=\R,\\ 
\Vol(\H^3/\Gamma_U)=&\frac{2\pi^2\Delta_k^{3/2}\zeta_k(2)\prod_{\frak p\in \Ram_f A}\frac{\N\frak p-1}{2}\prod_{\frak p\in S}\frac{\N\frak p+1}{2}}{|\cl(U)|(4\pi^2)^{[k:\Q]}} &\textrm{ if } \K=\C. 
\end{align*}\end{proof}

\begin{lemma}\label{lem:LatticeIndex}
Let $U$ be a maximal compact subgroup of $\mathbf G(\mathbb A_f)$ and let $V\subset U$ be an open subgroup. Then $[\Gamma_U:\Gamma_V]=[U:V]\frac{|\cl(U)|}{|\cl(V)|}.$ 
\end{lemma}
\begin{proof}
We have $[\Gamma_U:\Gamma_V]=[U:V][U:V\Gamma_U ]^{-1}$ so we need to show that $[U:V\Gamma_U]=\cl(V)/\cl(U).$ Consider the natural map $\alpha\colon \cl(V)\to \cl(U)$ of finite groups of exponent $2$ (c.f. Lemma \ref{lem:ClassGroup}). The map $\alpha$ fits into an exact sequence 
$$\begin{tikzcd} 1\arrow[r]& \Gamma_U\bs U/V\arrow[r]& \mathbf G(k)\bs \mathbf G(\mathbb A_f)/V\arrow[r,"\alpha"]& \mathbf G(k)\bs \mathbf G(\mathbb A_f)/ U\arrow[r]& 1.\end{tikzcd}$$
It follows that $|U/V\Gamma_U|=|\cl(V)|/|\cl(U)|$.
\end{proof}
Combining Lemmas \ref{lem:MaxCovol} and \ref{lem:LatticeIndex} we get:
\begin{lemma}\label{lem:CongVolume}
Let $U=\prod_{\frak p}U_\frak p\subset \mathbf G(\mathbb A_f)$ be a maximal compact subgroup, $S=\{\frak p| U_\frak p\sim U_\frak p^1\}$ and let $V\subset U$ be an open subgroup. We have
\begin{align*}
\Vol(\H^2/\Gamma_V)=&\frac{[U:V]4\pi\Delta_k^{3/2}\zeta_k(2)\prod_{\frak p\in \Ram_f A}\frac{\N\frak p-1}{2}\prod_{\frak p\in S}\frac{\N\frak p+1}{2}}{|\cl(V)|(4\pi^2)^{[k:\Q]}} &\textrm{ if } \K=\R,\\ 
\Vol(\H^3/\Gamma_V)=&\frac{[U:V]2\pi^2\Delta_k^{3/2}\zeta_k(2)\prod_{\frak p\in \Ram_f A}\frac{\N\frak p-1}{2}\prod_{\frak p\in S}\frac{\N\frak p+1}{2}}{|\cl(V)|(4\pi^2)^{[k:\Q]}} &\textrm{ if } \K=\C. 
\end{align*}
\end{lemma}

\subsection{Volumes of anisotropic tori}
\begin{lemma}\label{lem:TorusVolume}
Let $k$ be a number field, let $l/k$ be a quadratic extension with genus character $\chi$ and let $\mathbf T:=\Res^1_{l/k}\mathbb G_m$ be the associated norm torus. Write $a,b$ for the number of real places of $k$ that are split, inert in $l$ respectively and let $c$ be the number of complex places of $k$. We have
$$\Vol(\mathbf T(k)\bs \mathbf T(\mathbb A))\ll \frac{\Delta_k^{1/2}\N_{k/\Q}(\Delta_{l/k})^{1/2}L(1,\chi)}{2^{a}(2\pi)^{b+c}}.$$
\end{lemma}
\begin{proof}
 We are going to obtain this formula by specializing a more general general result of Ullmo and Yafaev \cite{UY}. Since they work with tori over $\Q$, let us replace $\mathbf T$ with $\Res_{k/\Q}\mathbf T$. The standard measures are compatible with Weil restriction so $\Vol(\mathbf T(k)\bs \mathbf T(\mathbb A))=\Vol(\Res_{k/\Q}\mathbf T(\Q)\bs \Res_{k/\Q}\mathbf T(\mathbb A_\Q)).$ For the remainder of this proof let us write $\mathbf T$ instead of $\Res_{k/\Q}\mathbf T$. The proof boils down to translating the language of \cite{UY} to our setting. 

The standard measure on $\mathbf T(\mathbb A_\Q)$ is the same as the measure $\nu_{T}$ defined by \cite[2.1.3 (11)]{UY}. Let $\omega_\mathbf T$ be the Tamagawa measure on $\mathbf T(\mathbb A_\Q)$ (c.f. \cite[2.1.3 (12)]{UY}), the two measures are related by $c_\mathbf T\nu_\mathbf T=\omega_\mathbf T$ where $c_\mathbf T$ is the \textbf{Shyr constant} of $\mathbf T$. We write $D_\mathbf T=\frac{1}{c_\mathbf T^2}$ and call it the \textbf{ quasi-discriminant} of $\mathbf T$.  Let $\rho_\mathbf T$ be the leading coefficient of the Artin L-function $L(s,X^*(\mathbf T)\otimes \C)$ at $s=1$. In our case $\mathbf T$ is anisotropic, so $\rho_\mathbf T$ will be simply $L(1,X^*(\mathbf T)\otimes \C).$
The Tamagawa number $\tau_\mathbf T$ of $\mathbf T$ is defined by $\omega_{\mathbf T}(\mathbf T(\Q)\bs \mathbf T(\mathbb A_\Q))=\rho_\mathbf T\tau_\mathbf T$. 
 Shyr's formula (\cite[(14)]{UY}, \cite{Shyr}) reads
$$R_\mathbf T h_\mathbf T=w_\mathbf T \rho_\mathbf T \tau_\mathbf T D_\mathbf T^{1/2},$$ where $w_\mathbf T$ is the size of the torsion subgroup of $\mathbf T(\Q)$. Rearranging the formula we find that $$\frac{R_\mathbf T h_\mathbf T}{w_\mathbf T}=\frac{\rho_\mathbf T\tau_\mathbf T}{c_\mathbf T}=\nu_\mathbf T(\mathbf T(\Q)\bs\mathbf T(\mathbb A)).$$
Following \cite[2.2.1]{UY} write $a(\mathbf T)$ for the Artin conductor of the Galois representation $X^*(\mathbf T)\otimes \C$. We use \cite[Proposition 3.1]{UY} to get 
$$\nu_\mathbf T(\mathbf T(\Q)\bs\mathbf T(\mathbb A))=\frac{a(\mathbf T)^{1/2}\tau_\mathbf T\rho_\mathbf T}{2^a(2\pi)^{b+c}\prod_{p}|\Phi_{\mathbf T,p}(\F_p)|}.$$ All we need to know about the factors $|\Phi_{\mathbf T,p}(\F_p)|$ is that they are at least $1$. The Tamagawa number $\tau_\mathbf T$ can be computed in terms of cohomological invariants of $\mathbf T$ \cite{Ono2}. We have 
$$\tau_\mathbf T=\frac{|H^1(\Q,X^*(\mathbf T))|}{|\Sh^1(\mathbf T)|},$$ where $|\Sh^1(\mathbf T)|$ is the Shafarevich-Tate group (see \cite[p. 284]{PlaRap}), defined as the kernel of the natural map $H^1(\Q,\mathbf T)\to \prod_{\nu}H^1(\Q_\nu,\mathbf T).$ Since $X^*(\mathbf{T})\simeq \Ind_{\Gal(k)}^{\Gal(\Q)}\chi$, we have $H^1(\Q,X^*(\mathbf T))\simeq H^1(k,\Z)$ where $\Gal(k)$ acts on $\Z$ by $\chi$. We conclude that $\tau_\mathbf T\leq |H^1(\Q,X^*(\mathbf T))|= 2$. 
$$\nu_\mathbf T(\mathbf T(\Q)\bs\mathbf T(\mathbb A))\leq \frac{2a(\mathbf T)^{1/2}\rho_\mathbf T}{2^a(2\pi)^{b+c}}=\frac{2 \Delta_k^{1/2}\N_{k/\Q}(\Delta_{l/k})^{1/2}\rho_\mathbf T}{2^a(2\pi)^{b+c}}.$$ The lemma is proved. 
\end{proof}
 
 \section{Archimedean estimates and consequences of the support condition}\label{sec:Archimedean}

 Let $\B(R), R>0$ be the subset of $\PGL(2,\K)$ defined by the condition 
 \begin{equation}\label{eq:BallDefinition}
  \frac{\tr(g^* g)}{|\det g|}=\frac{|a|^2+|b|^2+|c|^2+|d|^2}{|\det g|}\leq e^R+e^{-R} \textrm{ where } g=\begin{pmatrix}a & b\\ c& d\end{pmatrix}.
\end{equation}
In Section \ref{ssec:AOI} we compute the Archimedean orbital integrals of the characteristic functions of balls $\B(R)$. We shall assume that the radius $R$ is of the form $R=C+\eta[k:\Q]$ where $\eta, C$ are fixed positive constants. For any arithmetic lattice $\Gamma\subset \PGL(2,\K)$ and hyperbolic element $\gamma\in \Gamma$ we prove an inequality relating the minimal translation length $\m(\gamma)$ with the logarithmic Mahler measure of the eigenvalues of $\Ad(\gamma)$. This is the contents of Lemma \ref{lem:TranslationWeil}. We recall that $\m(\gamma)$ is the minimal displacement of $\gamma$ given by $\log|\lambda|$, where $\lambda$ is an eigenvalue of $\Ad \gamma$ of biggest absolute value\footnote{ If $\gamma$ is hyperbolic is it unique, otherwise all eigenvalues are of modulus $1$.}. In Lemma \ref{lem:TranslationWeil} we also give a lower bound on the degree of the extension generated by the eigenvalues of $\gamma$, which serves as an important input in the proofs of Lemmas \ref{lem:ClassCount},\ref{lem:SmallPrimes} and \ref{lem:WDiscBound}. In Section \ref{sec:ConjClasses}, Lemma \ref{lem:ClassCount} we parametrize and count conjugacy classes $[\gamma]$ in $\mathbf G(k)$ that can bring non zero contribution to the trace estimate from Lemma \ref{lem:TFTraceFormula} for test functions $f$ with $\supp f\subset \B(R)$. For $\eta$ small, the fact that $\Gamma_V$ has a non-torsion element with non-vanishing orbital integral $\O_\gamma(\mathbf 1_{\B(R)})$ has very strong consequences for the distribution of small primes in its invariant trace field $k$ (Lemma \ref{lem:SmallPrimes}), as well as for the non-Archimedean orbital integrals $\O_\gamma(\mathbf 1_V)$ of the same element (Lemma \ref{lem:WDiscBound}). This phenomenon is crucial in the proof of Theorem \ref{mthm:TraceEstimate} and we explain its ramifications in Section \ref{ssec:WDSMTF}. In Lemma \ref{lem:TorusVolumeUB} we estimate the adelic volumes of centralizers of non-torsion elements with non-vanishing orbital integrals and in Lemma \ref{lem:ClassGpUB} we estimate the size of the class group of $k$ under the assumption that at least one non-torsion $\gamma\in\Gamma$ has non-vanishing orbital integral.
\subsection{Archimedean orbital integrals}\label{ssec:AOI}  
 
 \begin{lemma}\label{lem:OIBound} Let $\gamma\in \mathbf G(k_{\nu_1})\simeq \PGL(2,\K)$ be a split semisimple regular element. Then 
 $$\O_\gamma(\mathbf 1_{\B(R)})\ll |\Delta(\gamma)|_\K^{-1/2} \max\{0,e^R-e^{\m(\gamma)}\}^{[\K:\R]/2}.$$
 In particular the orbital integral vanishes unless $\m(\gamma)\leq R$. 
 \end{lemma}
 
\begin{proof}
Write $G=\PGL(2,\K)$. Choose a measurable bounded function $\alpha$ on $G$ such that 
$\int_{G_\gamma}\alpha(tg)dt=1$ for all $g\in G$. Then
\begin{align}
\O_\gamma(\mathbf 1_{\B(R)})=\int_{G_\gamma\backslash G}\mathbf 1_{\B(R)}(x^{-1}\gamma x)dx=\int_G \alpha(g)\mathbf 1_{\B(R)}(g^{-1}\gamma g)dg.
\end{align}
Since $G_\gamma$ is split, it is conjugate to the diagonal subgroup $A$. Conjugating $\gamma$ does not change the orbital integrals so we may assume $\gamma=\begin{pmatrix}
a & 0\\ 0 & b
\end{pmatrix}$ and $G_\gamma=A$. We have the Iwasawa decomposition $G=ANK$ where $N$ is the group of unipotent upper triangular matrices. By \cite[Theorem 2.5.1]{LangSL2} for any integrable $h$ we have 
\begin{equation*}
\int_G h(g)dg=\int_A\int_N\int_K h(ank)dadndk. 
\end{equation*}
Hence,
\begin{align*}
\O_\gamma(\mathbf 1_{\B(R)}) =& \int_A\int_N\int_K \alpha(ank)\mathbf 1_{\B(R)}(k^{-1}n^{-1}a^{-1}\gamma ank)dadndk\\
=& \int_A\int_N\int_K \alpha(ank)\mathbf 1_{\B(R)}(n^{-1}\gamma n)dadndk\\
=& \int_N \mathbf 1_{\B(R)}(n^{-1}\gamma n)dn\\
=& \int_\K \mathbf 1_{\leq e^R+e^{-R}}\left(\frac{|a|^2+|b|^2}{|ab|}+|\Delta(\gamma)||x|^2\right)dx\\
\ll & |\Delta(\gamma)|_\K^{-1/2}\max\{0,e^R+e^{-R}-\frac{|a|^2+|b|^2}{|ab|}\}^{[\K:\R]/2}.
\end{align*}  We have $\frac{|a|^2+|b|^2}{|ab|}=e^{\m(\gamma)}+e^{-\m(\gamma)}$ so the resulting inequality can be rewritten as
$$\O_\gamma(\mathbf 1_{\B(R)})\ll |\Delta(\gamma)|_\K^{-1/2}\max\{0,e^R+e^{-R}-e^{\m(\gamma)}-e^{-\m(\gamma)}\}^{[\K:\R]/2}\leq |\Delta(\gamma)|_\K^{-1/2}\max\{0,e^R-e^{\m(\gamma)}\}^{[\K:\R]/2}.$$
\end{proof}

\subsection{Conjugacy classes with non-vanishing contribution}\label{sec:ConjClasses}
Let $f\in C_c(\PGL(2,\K))$ with $\supp f\subset \B(R)$, let $k, \mathbf G$ be as in Section \ref{sec:CongruenceLattices} and let $U$ be a maximal compact subgroup of $\mathbf G(\mathbb A_f)$. Put $f_\mathbb A:=f\times\mathbf 1_{\SO(3)}^{s-1}\times \mathbf 1_U$. In this section we count the number of regular semisimple, non-torsion conjugacy classes $[\gamma]\subset \mathbf G(k)$ that bring non-zero contribution to the sum on the right hand side of the formula in Lemma \ref{lem:TFTraceFormula}: $$\sum_{[\gamma]\subset \mathbf G(k)\cap W}\Vol(\mathbf G_\gamma(k)\bs \mathbf G_\gamma(\mathbb A))\O_\gamma(f_\mathbb A).$$ The main result of this section is Lemma \ref{lem:ClassCount} which asserts that for $R=C +\eta[k:\Q]$ the number of such classes is of order $e^{\kappa_3(\eta)[K;\Q]+o_{C,\eta}([k:\Q])}$ where $\kappa_3:\R_{>0}\to\R_{>0}$ vanishes as $t\to 0$. 

The problem of counting conjugacy classes contributing to the trace formula readily reduces to a purely Diophantine problem of counting algebraic integers with certain properties. The regular semisimple elements in $\mathbf G(k)$ can be parametrized as follows. For $\gamma\in \mathbf G(k)$ choose a lift $\tilde\gamma\in A^\times$. The extension $l:=k(\tilde\gamma)$ does not depend on the choice of the lift. Therefore $\gamma$ corresponds to a well defined element $\tilde\gamma k^\times:=\xi \in (l^\times-k^\times)/k^\times$. By Skolem-Noether theorem \cite[2.9.8]{MaRe03} the $\mathbf G(k)$ conjugacy class is uniquely determined by $\xi$. Writing $\lambda=\xi/\xi^\sigma$ where $\sigma$ is the generator of $\Gal(l/k)$ we obtain a parametrization of the regular semisimple conjugacy classes in $\mathbf G(k)$ by pairs $(l,\lambda)$ where $l$ is a quadratic extension of $k$ contained in $A$ and $\lambda\neq 1\in l^\times$ is an element satisfying $\N_{l/k}(\lambda)=1$, up to the identification of $(l,\lambda)$ and $(l,\lambda^{-1})$. The numbers $\lambda,\lambda^{-1}$ are precisely the non-trivial\footnote{i.e. distinct from $1$.} eigenvalues of $\Ad \gamma$. 
The extension $l/k$ is determined by $\lambda$ unless $\lambda=-1$, in which case $\gamma$ is a torsion element of order $2$. Finally $\lambda$ needs to be a unit if the conjugacy class of $\gamma$ meets a compact subgroup of $\mathbf G(\A_f)$ (c.f. Lemma \ref{lem:TranslationWeil}). Indeed, assume $[\gamma]$ intersects a compact subgroup $V$ of $\mathbf G(\mathbb A_f)$. At each finite place $\frak p$ we have $\lambda^n+\lambda^{-n}+2=\tr \Ad\gamma^n\in \frak o_\frak p$. It follows that $\lambda,\lambda^{-1}$ are algebraic integers.  We condense the above discussion in the form of a lemma:
\begin{lemma}\label{lem:CGParam}
The non-torsion conjugacy classes $[\gamma]\in \mathbf G(k)$ that meet a a compact subgroup of $\mathbf G(\A_f)$ are parametrized by non-torsion units $\lambda\in \overline k$, up to equivalence of $\lambda$ and $\lambda^{-1}$, with the following properties: \begin{enumerate}\item $l=k(\lambda)$ is a quadratic extension of $k$, $\N_{l/k}(\lambda)=1$, \item the infinite places $\nu\neq \nu_1$ become complex in $l/k$ while the infinite place $\nu_1$ splits into two real places if $\K=\R$ (and into two  complex places if $\K=\C$).\end{enumerate}
The parametrization is given explicitly by $[\gamma]\mapsto \lambda$ where $\lambda$ is a non-trivial eigenvalue of $\Ad(\gamma)$.
\end{lemma} 
We remark that not all such units $\lambda$ need to correspond to conjugacy classes in $\mathbf G(k)$ (which ones do will depend on the algebra $A$) but the map from conjugacy classes to units up to equivalence is always an injection.
To parametrize the conjugacy classes that can contribute to the trace formula, we relate the minimal displacement $\m(\gamma)$ with the logarithmic Mahler measure of the eigenvalues of $\Ad \gamma$. In preparation for the counting problem we also give lower bounds on the degrees of the fields generated by the eigenvalues of non-torsion elements. 
\begin{lemma}\label{lem:TranslationWeil}
Let $[\gamma]$ be a non torsion conjugacy class in $\mathbf G(k)$ that intersects an open compact subgroup $V\subset \mathbf G(\mathbb A_f)$. Let $\lambda,\lambda^{-1} \neq 1$ be the non-trivial eigenvalues of $\Ad\gamma$. Then $\lambda$ is a unit and:\begin{enumerate}
\item If $\K=\R$, then $k(\lambda)=\Q(\lambda)$ and $\m(\lambda)= \m(\gamma).$
\item If $\K=\C$, then either  $k(\lambda)=\Q(\lambda)$ or $[k(\lambda):\Q(\lambda)]=2$. In the first case $\m(\lambda)= 2\m(\gamma)$, in the second $\m(\lambda)=\m(\gamma)$. 
\end{enumerate}
In particular $[\Q(\lambda):\Q]\geq [k:\Q]$ and $h(\lambda)\leq \frac{\m(\gamma)}{[k:\Q]}.$
\end{lemma}
\begin{proof}
Put $l:=k(\lambda)$. It is a quadratic extension of $k$.
We enumerate the embeddings $\Omega_k=\{\sigma_1,\ldots,\sigma_d\}, \Omega_l=\{\sigma_1^1,\sigma_1^2,\sigma_2^1,\ldots,\sigma_d^2\}$ in such a way that $\sigma_i^1,\sigma_i^2$ are the extensions of $\sigma_i$. For any embedding $\sigma\in \Omega_k,\Omega_l$ write $|\cdot|_\sigma$ for the corresponding valuation on $k, l$ respectively and $k_\sigma,l_\sigma$ for the completion with respect to these valuations.
\textbf{ Case $\K=\R.$} There is a unique embedding $\sigma_1\in \Omega_k$ such that $\mathbf G(k_{\sigma_1})\simeq \PGL(2,\K)$ and $\mathbf G(k_{\sigma_i})\simeq \SO(3)$ for $i\geq 2$. Since $\gamma$ is assumed to be hyperbolic, we deduce that $|\sigma_1(\lambda+\lambda^{-1})|>2$ and $|\sigma_i(\lambda+\lambda^{-1})|\leq 2$ if $i\geq 2$. 
The numbers $\sigma_i^1(\lambda),\sigma_i^2(\lambda)$ are the roots of the polynomial $X^2-\sigma_i(\lambda+\lambda^{-1})X+1$ so they are real if $i=1$ and complex of modulus $1$ if $i=2,\ldots,d$. It follows immediately that, up to switching the order, $\sigma_1^1(\lambda)>1$, $\sigma_1^2(\lambda)<1$ and $|\sigma_i^1(\lambda)|=|\sigma_i^2(\lambda)|=1$ for $i\geq 2$. Since $\sigma_1^1$ is the unique embedding in $\Omega_l$ such that $|\sigma_1^1(\lambda)|>1$, the stabilizer of $\sigma_1^1(\lambda)$ in $\Gal(\Q)$ coincides with the stabilizer of $\sigma_1^1$ i.e. $\Gal(l)$. Therefore $\Q(\lambda)=l$. We compute the logarithmic Mahler measure
$$\m(\lambda)=\sum_{i=1}^d(\log^+|\sigma_i^1(\lambda)|+\log^+|\sigma_i^2(\lambda)|)=\log|\sigma_1^1(\lambda)|=\m(\gamma).$$
This proves (1).
\textbf{ Case $\K=\C$.} There is a unique pair of conjugate complex embeddings $\sigma_1,\sigma_2$ such that $\mathbf G(k_{\sigma_1})\simeq \mathbf G(k_{\sigma_2})\simeq  \PGL(2,\K)$ and $\mathbf G(k_{\sigma_i})\simeq \SO(3)$ for $i\geq 3$. Since $\gamma$ is assumed to be hyperbolic we deduce that, up to re-ordering, $|\sigma_1^1(\lambda)|=|\sigma_2^1(\lambda)|>1$,$|\sigma_1^2(\lambda)|=|\sigma_2^2(\lambda)|<1$ and $|\sigma_i^1(\lambda)|=|\sigma_i^2(\lambda)|=1$ for $i\geq 3$. We consider two cases according to whether $\sigma_1^1(\lambda)$ is real or not. If $\sigma_1^1(\lambda)\not\in\R$, then $\sigma_1^1(\lambda)\neq \sigma_2^1(\lambda)$. Hence, the stabilizer of $\sigma_1^1(\lambda)$ in $\Gal(\Q)$ is $\Gal(l)$ so $\Q(\lambda)=l$. We have
$$\m(\lambda)=\sum_{i=1}^d(\log^+|\sigma_i^1(\lambda)|+\log^+|\sigma_i^2(\lambda)|)=(\log|\sigma_1^1(\lambda)|+\log|\sigma_2^1(\lambda)|)=2\m(\gamma).$$
If $\sigma_1^1(\lambda)\in \R$, then the stabilizer of $\sigma_1^1(\lambda)$ in $\Gal(\Q)$ contains $\Gal(l)$ as index $2$ subgroup. Hence $[l:\Q(\lambda)]=2$. By the same calculation as above we have $\m(\lambda)=\m(\gamma).$ 
\end{proof}
\begin{definition}\label{def:CkR}
Define $\mathcal C(k,R)$ as the set of all units $\lambda$ such that $\lambda$ is non-torsion, satisfies the conditions (1),(2) from Lemma \ref{lem:CGParam} and $\m(\lambda)\leq R$ if $\K=\R$ or $\m(\lambda)\leq 2R$ if $\K=\C$. 
\end{definition}
Given Lemmas \ref{lem:CGParam}, \ref{lem:TranslationWeil} and \ref{lem:OIBound} we see that for any test function $f\in C_c(\PGL(2,\K))$ with $\supp f_\infty\subset \B(R)$, the non torsion conjugacy classes that can contribute to the adelic trace formula (c.f. Lemma \ref{lem:TFTraceFormula}) are parametrized by $\lambda\in \mathcal C(k,R)$ up to identifying $\lambda$ with $\lambda^{-1}$. In the next section we will show that mere fact that $\mathcal C(k,R)\neq \emptyset$ has important consequences for the arithmetic of the field $k$. We have the following upper bound on the size of $\mathcal C(k,R).$
\begin{lemma}\label{lem:ClassCount}
Let $R=\eta [k:\Q]+C$ with $0<\eta<1/4$ and $C>0$. There exists a function $\kappa_3:\R_{>0}\to \R_{>0}$ (independent of $\eta,C$) with the following properties: $\lim_{t\to 0}\kappa_3(t)=0$ and $$|\mathcal C(k,R)|\leq e^{\kappa_3(\eta)[k:\Q]+o_{C,\eta}([k:\Q])}.$$
\end{lemma}
\begin{proof} Let us sketch the argument first. Let $\mathcal C'(k,R)=\mathcal C(k,R)/\sim $ where $\sim$ is the equivalence relation of the multiplicative dependence i.e. $\lambda_1\sim \lambda_2$ if there exist $n,m\in\Z\setminus \{0\}$ such that $\lambda_1^n=\lambda_2^m$. As the first step we show that $|\mathcal C(k,R)|\ll R[k:\Q]\log [k:\Q]^4|\mathcal C'(k,R)|.$  This is a relatively quick consequence of Dobrowolski's lower bound on Mahler measures of algebraic number in terms of the degree \cite{Dobr79}. For $R= \eta [k:\Q]+C$ the factor $R[k:\Q]\log [k:\Q]^4$ becomes $e^{o_{C,\eta}([k:\Q])}$ so the initial problem is reduced to 
\begin{equation}\label{eq:CkRFirstReduction}
 |\mathcal C'(k,R)|\leq  e^{\kappa_3(\eta)[k:\Q]+o_{C,\eta}([k:\Q])}.
\end{equation}
In the second step we construct a map $v\colon \mathcal C'(k,R)\to \R^{s-1}$ (recall that $s=[k:\Q]$ if $\K=\R$ and $s=[k:\Q]-1$ if $\K=\C$).
Let $H_1\subset \Hom(k,\C)$ be the set of real embeddings corresponding to the places in $\Ram_\infty A$. For $[\lambda]\in \mathcal C'(k,R)$ choose a representative  $\alpha\in [\lambda]$. Put $$v([\lambda])=\left(\frac{\sigma(\alpha+\alpha^{-1})}{\left(\sum_{\sigma\in H_1}\sigma(\alpha+\alpha^{-1})^2\right)^{1/2}}\right)_{\sigma\in H_1}.$$ The map depends on the choice of $\alpha$ so we fix the choice of representatives for the remainder of the argument. 
We shall show that $v$ has the following properties: \begin{enumerate}\item $\langle v([\lambda]), v([\lambda])\rangle=1$ for $[\lambda]\in \mathcal C'(k,R)$, \item $\langle v([\lambda_1]), v([\lambda_2])\rangle\ll \eta^{1/2}+o_{C,\eta}(1)$ for $[\lambda_1]\neq[\lambda_2]\in \mathcal C'(k,R).$\end{enumerate} In the inequality above and the following ones $o_{C,\eta}(1)$ means a quantity that tends to $0$ as $[k:\Q]\to\infty$. To show these inequalities we use the quantitative Bilu equidistribution results developed in Section \ref{sec:Bilu}. 
The vectors  $v([\lambda])$ form an almost orthogonal family in $\R^{s-1}$. In the third step we apply Kabatjanskii-Levenstein bounds on the maximal number of almost orthogonal vectors to bound the size of $\mathcal C'(k,R)$. More precisely, we use a simplified version of Kabatjanskii-Levenstein bounds proved by Terrence Tao. Since the proof of this bound is published only as a blog entry we reproduce the argument for the reader's convenience. In this way we prove (\ref{eq:CkRFirstReduction}) with $\kappa_3(t)=O(-t\log t)$, which concludes the proof.                                                                                                                                                                                                                                                                                                                                                                                                                               
                                                                                                                                                                                                                                                                                                                                                                                                                          
\textbf{ Step 1.} $|\mathcal C(k,R)|\ll R[k:\Q]\log[k:\Q]^4|\mathcal C'(k,R)|.$  We need to show that any equivalence class generated by relation $\sim$ has at most $O( R[k:\Q]\log [k:\Q]^4)$ elements. Fix $\lambda\in \mathcal C(k,R)$, let $E=\{\lambda'\in \mathcal C(k,R)| \lambda'\sim\lambda\}$ and let $l:=k(\lambda)$. Let $\lambda'\in E$ and let $m,n\in\Z\setminus 0$ be such that $\lambda^n=\lambda'^m$. Since $\N_{l/k}(\lambda^n)=1$, we must have $\lambda^n\not\in k$ as otherwise $\lambda$ would be torsion. Therefore $l=k(\lambda)=k(\lambda^n)=k(\lambda')$. We proved that 
$$E\subset \{\lambda'\in \frak o_l^\times|\, \N_{l/k}(\lambda')=1, \m(\lambda')\leq 2R\}.$$
By Dirichlet's unit theorem and the splitting conditions satisfied by $l$ we have $\{\lambda'\in \frak o_l^\times|\, \N_{l/k}(\lambda')=1\}\simeq \Z\times W$, where $W$ is the torsion subgroup. Let $\alpha$ be the generator of the torsion-free part. Every element $\lambda'\sim \lambda$ can be written as $\lambda'=w \alpha^m$ for $m\in \Z, w\in W$. By \cite{Dobr79} $\m(\alpha)\gg \log[\Q(\alpha):\Q]^{-3}$, so $\m(\lambda')\leq 2R$ implies $m\ll R\log[k:\Q]^3$. The size of $W$ can be estimated by the number of roots of identity in $l$, which is at most $O([l:\Q]\log\log[l:\Q])$. Hence, $|E|\ll R[k:\Q]\log [k:\Q]^4$, as desired.

\textbf{ Step 2.} Let $[\lambda_1],[\lambda_2]\in \mathcal C'(k,R)$. Note that if $k(\lambda_1)=k(\lambda_2)$, then $[\lambda_1]=[\lambda_2]$ (cf. the proof of Lemma \ref{lem:IndepProducts}). Let $L=k(\lambda_1,\lambda_2)$ and let $H'=\{\sigma\in \Hom(L,\C)|\quad \sigma|_k\in H_1\}$. We recall that $H_1\subset \Hom(k,\C)$ was defined as the set of real embeddings corresponding to the places in $\Ram_\infty A$, so $|H_1|=s-1$.  We have $|H'|=4|H_1|$ if $[\lambda_1]\neq [\lambda_2]$ and $|H'|=2|H_1|$ otherwise. Let $\alpha_1, \alpha_2$ be the representatives of $[\lambda_1],[\lambda_2]$. Since $\alpha_i,\alpha^{-1}_i$ are Galois conjugate for $i=1,2$ we have 
\begin{equation}
\sum_{\sigma\in H_1}\sigma((\alpha_1+\alpha_1^{-1})(\alpha_2+\alpha_2^{-1}))=\begin{cases}\sum_{\sigma'\in H'}\sigma'(1+\alpha_1^2) & \textrm{ if } [\lambda_1]=[\lambda_2]\\
                                                                                                                          \sum_{\sigma'\in H'}\sigma'(\alpha_1\alpha_2)& \textrm{ if } [\lambda_1]\neq[\lambda_2].
                                                                                                                          
                                                                                                                         \end{cases}
\end{equation}
By Lemma \ref{lem:TranslationWeil} $[\Q(\alpha_i):\Q]\geq [k:\Q]$. Lemma \ref{lem:BiluFinal} yields $$\sum_{\sigma'\in H'}\sigma'(1+\alpha_1^2)=2[k:\Q]+O(\eta^{1/2}[k:\Q])
+o_{\eta,C}([k:\Q]).$$ If $[\lambda_1]\neq[\lambda_2]$, then by Lemma \ref{lem:IndepProducts} we have $[\Q(\alpha_1\alpha_2)]\geq 2[k:\Q]$ so Lemma \ref{lem:BiluFinal} yields $$\sum_{\sigma'\in H'}\sigma'(\alpha_1\alpha_2)\ll \eta^{1/2}[k:\Q]+ o_{\eta,C}([k:\Q]).$$
By inspecting the definition of $v$ we infer that $\langle v([\lambda_1]), v([\lambda_2])\rangle=1$ if $[\lambda_1]=[\lambda_2]$ and  $\langle v([\lambda_1]), v([\lambda_2])\rangle\ll \eta^{1/2}+o_{C,\eta}(1)$ if $[\lambda_1]\neq[\lambda_2]\in \mathcal C'(k,R).$

\textbf{ Step 3.} We apply Lemma \ref{lem:KL} to the collection of vectors $v([\lambda]), [\lambda]\in \mathcal C'(k,R)$, $n=|H_1|$ and $C_1:=\eta^{1/2}[k:\Q]^{1/2}+ o_{C,\eta}([k:\Q]^{1/2})$. As $\eta<1/4$, the assumption $C_1\leq \frac{1}{2}n^{1/2}$ used in Lemma \ref{lem:KL} will be satisfied for $[k:\Q]$ large enough. Since $|H_1|$ is either $[k:\Q]-1$ or $[k:\Q]-2$ we can replace $|H_1|$ with $[k:\Q]$ in the resulting estimates. With $C_2>0$ as in Lemma \ref{lem:KL} we get
\begin{align*}|\mathcal C'(k,R)|&\ll_{C,\eta} \left(\frac{[k:\Q]}{\eta[k:\Q]+ o_{C,\eta}([k:\Q])}\right)^{C_2(\eta^{1/2}[k:\Q]^{1/2}+ o_{C,\eta}([k:\Q]^{1/2}))^2}\\&=e^{-C_2 \eta\log\eta[k:\Q] +o_{C,\eta}([k:\Q])}=e^{\kappa_3(\eta)[k:\Q]+o_{C,\eta}([k:\Q])},\end{align*} with $\kappa_3(t):=-C_2t \log t.$

\end{proof}
\begin{lemma}\label{lem:IndepProducts}
Let $\lambda_1,\lambda_2\in \mathcal{C}(k,R)$. Then either $\lambda_1,\lambda_2$ are multiplicatively dependent or $[\Q(\lambda_1\lambda_2):\Q]\geq 2[k:\Q]$. 
\end{lemma}
\begin{proof}
Let $l_i=k(\lambda_i)$ for $i=1,2$. \textbf{ Case $l_1=l_2=:l$.} According to Definition \ref{def:CkR} $\lambda_i$ is a unit of $\frak o_{l}$ satisfying $\N_{l/k}(\lambda_i)=1.$ By Dirichlet's unit theorem and the splitting conditions satisfied by $l/k$ at Archimedean places we have $\rk \frak o_{l}^\times-\rk \frak o_k^\times=1$. Hence, the subgroup $\{x\in \frak o_l^\times|\, \N_{l/k}(x)=1\}$ is virtually cyclic, so $\lambda_1$ and $\lambda_2$ are multiplicatively dependent. 
\textbf{ Case $l_1\neq l_2$.} Write $L=k(\lambda_1,\lambda_2)$. It is a degree $4$ Galois extension of $k$, with $\Gal(L/k)\simeq (\Z/2\Z)^2$. Let $\Hom(k,\C)=\{\sigma_1,\ldots,\sigma_d\}$ and let $\sigma_{i}^{a,b}, i=1,\ldots,d,$ $ a,b\in\{\pm 1\}$ be an enumeration of $\Hom(L,\C)$ in such a way that $\sigma_{i}^{a,b}|_k=\sigma_i$, $\sigma_{i}^{a,b}(\lambda_1)=\sigma_{i}^{1,1}(\lambda_1)^a, \sigma_{i}^{a,b}(\lambda_2)=\sigma_{i}^{1,1}(\lambda_2)^b$. 
\textbf{ Subcase $\K=\R$.} There is a unique embedding $\sigma_1\in\Hom(k,\C)$ such that $\mathbf G(k_{\sigma_1})\simeq \PGL(2,\K)$ and $\mathbf G(k_{\sigma_i})\simeq \SO(3)$ for $i\geq 2$. For every $i\geq 2, a,b\in\{\pm 1\}$ we have $|\sigma_i^{a,b}(\lambda_1\lambda_2)|=1$. If the values of $|\sigma_1^{a,b}(\lambda_1\lambda_2)|$ are pairwise distinct for $a,b\in \{\pm 1\}$, then the stabilizer of $\sigma_1^{1,1}(\lambda_1\lambda_2)$ in $\Gal(\Q)$ is an index $[L:\Q]$ subgroup so $[\Q(\lambda_1\lambda_2):\Q]=[L:\Q]=4[k:\Q].$ If the values of $|\sigma_1^{a,b}(\lambda_1\lambda_2)|$ are not pairwise distinct, then $\lambda_1,\lambda_2$ or $\lambda_1\lambda_2$ is torsion. First two possibilities are excluded since $\mathcal C(k,R)$ contains no torsion elements and the last implies that $\lambda_1$ and $\lambda_2$ are multiplicatively dependent. 
\textbf{ Subcase $\K=\C$.} There is a unique pair of conjugate complex embeddings $\sigma_1,\sigma_2$ such that $\mathbf G(k_{\sigma_1})\simeq \mathbf G(k_{\sigma_1})\simeq  \PGL(2,\K)$ and $\mathbf G(k_{\sigma_i})\simeq \SO(3)$ for $i\geq 3$. For every $i\geq 3, a,b\in\{\pm 1\}$ we have $|\sigma_i^{a,b}(\lambda_1\lambda_2)|=1$. If the values of $|\sigma_1^{a,b}(\lambda_1\lambda_2)|$ are pairwise distinct, then the stabilizer of $\sigma_1^{1,1}(\lambda_1\lambda_2)$ in $\Gal(\Q)$ is an index $[L:\Q]$ subgroup if $\sigma_1^{1,1}(\lambda_1\lambda_2)\not\in \R$ or an index $[L:\Q]/2$ subgroup if $\sigma_1^{1,1}(\lambda_1\lambda_2)\in \R$. In both cases we deduce that $[\Q(\lambda_1\lambda_2):\Q]\geq 2[k:\Q].$ The last remaining case is when the values of $|\sigma_1^{a,b}(\lambda_1\lambda_2)|$ are not pairwise distinct. Then either $\lambda_1,\lambda_2$ or $\lambda_1\lambda_2$ is torsion, which can happen only if $\lambda_1$ and $\lambda_2$ are multiplicatively dependent. 


\end{proof}
\begin{lemma}\label{lem:KL}[Kabatjanskii-Levenstein, Tao] Let ${v_1,\ldots,v_m}$ be unit vectors in ${{\R}^n}$ such that $|\langle v_i, v_j \rangle| \leq$ $ C_1 n^{-1/2}$ for some $\frac{1}{2} \leq C_1 \leq \frac{1}{2} \sqrt{n}$. Then we have ${m \leq (\frac{n}{C_1 ^2})^{C_2 C_1^2}}$ for some absolute constant ${C_2}$.
\end{lemma}
\begin{proof}
We give the proof due to Terrence Tao, from a blog entry\footnote{\tt{https://terrytao.wordpress.com/2013/07/18/a-cheap-version-of-the-kabatjanskii-levenstein-bound\\-for-almost-orthogonal-vectors/}}. Let $k$ be such that $$C_1n^{-1/2}\leq 2^{-1/k}{n+k-1\choose k}^{-1/2k}.$$ Let $\{w_{i}\}_{i=1,\ldots, m}\in (\R^n)^{\otimes k}$ be given as $w_i:=v_i^{\otimes k}$.  For each $1\leq i\neq j\leq m$ we have $|\langle w_i,w_j\rangle| \leq C_1^{k}n^{-k/2}\leq \frac{1}{2}{n+k-1\choose k}^{-1/2}.$ We claim that $m< 2{n+k-1\choose k}.$ For the sake of contradiction assume $m\geq 2N:=2{n+k-1\choose k}$ and consider the Gramm matrix $M:=(\langle w_i,w_j\rangle)_{i,j=1,\ldots,2N}.$ Since the vectors $w_i$ lie in the $k$-th symmetric power of $\R^n$, which is an $\dim \Sym^k(\R^n)=N$ dimensional space, the rank of $M$ is at most $N$. Hence, $M-{\rm Id}$ has eigenvalue $-1$ with multiplicity at least $N$. Taking the Hilbert-Schmidt norm of $M$ we conclude 
$$\sum_{i,j=1,\ldots, 2N}|\langle w_i,w_j\rangle -\delta_{i,j}|^2\geq N.$$ On the other hand, $|\langle w_i,w_j\rangle| \leq \frac{1}{2}N^{-1/2}$ for $i\neq j$ so the left hand side is bounded above by $2N(2N-1)\frac{1}{4}N^{-1}$. We get $\frac{1}{2}(2N-1)\geq N$ which is a contradiction. Therefore $m<2{n+k-1\choose k}\leq n^{k}C_1^{-2k}.$ Using Stirling approximation we verify that one can choose $k=\lfloor C_2C_1^{2}\rfloor$ for some large constant $C_2$. 
\end{proof}

We end this Section with an upper bound on Archimedean orbital integrals in arithmetic lattices. This complements Lemma \ref{lem:OIBound}.
\begin{lemma}\label{lem:AOIBound}
Let $[\gamma]$ be a non torsion conjugacy class in $\mathbf G(k)$ that intersects non-trivially an open compact subgroup $V\subset \mathbf G(\mathbb A_f)$. Then 
$$\O_\gamma(\mathbf 1_\B(R))\ll e^{[\K:\R]R/2}\log [k:\Q]^{3[\K:\R]}.$$
\end{lemma}
\begin{proof}
 By Lemma \ref{lem:OIBound} $\O_\gamma(\mathbf 1_\B(R))\ll |\Delta(\gamma)|_{\K}^{-1/2}\max\{0,e^R-e^{\m(\gamma)}\}^{[\K:\R]/2}$. Lemma will follow once we prove that $|\Delta(\gamma)|\gg \log[k:\Q]^{-6}.$
 Let $\lambda,\lambda^{-1}$ be the non-trivial eigenvalues of $\Ad\gamma$. By Lemma \ref{lem:TranslationWeil} the logarithmic Mahler measure satisfies $\m(\lambda)\leq 2\m(\gamma)=:|\log|\lambda||$. As $\lambda$ is a non-torsion algebraic integer of degree at most $2[k:\Q]$, we can invoke Dobrowolski's lower bound \cite{Dobr79}: $\m(\lambda)\gg \log[\Q(\lambda):\Q]^{-3}\geq \log [k:\Q]^{-3}.$ Therefore 
 $$|\Delta(\gamma)|=|(1-\lambda)(1-\lambda^{-1})|\gg \log [k:\Q]^{-6}.$$ 
\end{proof}

\subsection{Weyl discriminants and small primes in the invariant trace field}\label{ssec:WDSMTF}
Let $R=\eta[k:\Q]+C$, with $0<\eta<1/4, 0<C$, and let $\mathcal C(k,R)$ be as in Definition \ref{def:CkR}. As explained in the previous section, the set $\mathcal C(k,R)$ modulo inverse parametrizes the conjugacy classes $[\gamma]\subset \mathbf G(k)$ that can intersect $\B(R)\times \SO(3)^{s}\times U$ for some open compact subgroup $U\subset \mathbf G(\mathbb A_f)$. In this section we estimate various arithmetic invariants attached to $[\gamma]$ and $k$. Namely, the norm of the Weyl discriminant, the norm of the relative discriminant of the quadratic extension of $k$ generated by the eigenvalues of $\gamma$ and the standard volume of $\mathbf G_\gamma(k)\bs \mathbf G_\gamma(\mathbb A)$. Under the assumption that $\mathcal C(k,R)\neq \emptyset$ we prove an upper bound on $|\cl(k)|$.  
\begin{lemma}\label{lem:WDiscBound}
Let $[\gamma]\subset \mathbf G(k)$ and assume that a non-trivial eigenvalue $\lambda$ of $\Ad \gamma$ is in $\mathcal C(k,R)$. Let $l=k(\lambda)$ be the quadratic extension generated by $\lambda$. Then
\begin{enumerate}
 \item $|\N_{k/\Q}(\Delta(\gamma))|=\exp({O(-\eta^{1/2}\log \eta [k:\Q])+ o_{C,\eta}([k:\Q])}).$
 \item $\N_{k/\Q}(\Delta_{l/k})=\exp({O(-\eta^{1/2}\log \eta [k:\Q])+ o_{C,\eta}([k:\Q])}).$
\end{enumerate}
\end{lemma}
\begin{proof}
By Lemma \ref{lem:TranslationWeil} $[\Q(\lambda):\Q]\geq [k:\Q]$. We have $\Delta(\gamma)=(1-\lambda)(1-\lambda^{-1})=\N_{l/k}(1-\lambda)$. Therefore, by Lemma \ref{lem:BiluFinal} 
$$\log |\N_{k/\Q}(\Delta(\gamma))|=\log|\N_{l/\Q}(1-\lambda)|=[l:\Q(\lambda)]\log|\N_{\Q(\lambda):\Q}(1-\lambda)|\ll -\eta^{1/2}\log\eta [k:\Q]+o_{C,\eta}([k:\Q]).$$
For the second assertion observe that $\Delta_{l/k}$ divides the principal ideal generated by $\det\begin{pmatrix} 1 & \lambda\\ 1 & \lambda^{-1}\end{pmatrix}^2=-\Delta(\gamma^2)$.  Since $\lambda^2\in \mathcal C(k,2R)$, by the previous point we have $$\log \N_{k/\Q}(\Delta_{l/k})=O(-\eta^{1/2}\log \eta [k:\Q])+ o_{C,\eta}([k:\Q]).$$
\end{proof}

\begin{lemma}\label{lem:SmallPrimes} 
If $\mathcal C(k,R)\neq \emptyset$, then: 
\begin{enumerate}
 \item For every $X>0$ we have $\pi_k(X)\ll_{X} \kappa_1(\eta)[k:\Q]+ o_{X,C,\eta}([k:\Q])$, where $\kappa_1\colon \R_{>0}\to\R_{>0}$ is as in Lemma \ref{lem:BiluZeta}.
 \item For any $s=\sigma+it,\sigma>1$ we have $\log |\zeta_{k}(s)|\leq \kappa_{2,\sigma}(\eta)[k:\Q]+o_{C,\eta}([k:\Q]),$ where $\kappa_{2,\sigma}\colon \R_{>0}\to\R_{>0}$ is as in Lemma \ref{lem:BiluZeta}.
\end{enumerate}
\end{lemma}
\begin{proof}
Let $\lambda\in \mathcal C(k,R)$ and let $l=k(\lambda)$ be the quadratic extension generated by $\lambda$. Each prime $\frak p$ of $\frak o_k$ of norm $\leq X$ splits into at most two primes of $\frak o_l$ of norm at most $X^2$. Each prime $\frak P$ of $\frak o_l$ lies over a prime of $\Q(\lambda)$ of less or equal norm and the map $\frak P\to \frak P|_{\frak o_{\Q(\lambda)}}$ is at most $[l:\Q(\lambda)]$-to-$1$. Therefore, by Lemmas \ref{lem:TranslationWeil} and \ref{lem:BiluZeta} $$\pi_k(X)\leq \pi_l(X^2)\leq [l:\Q(\lambda)]\pi_{\Q(\lambda)}(X^2)\ll_{X} \kappa_1(\eta)[k:\Q]+ o_{C,X,\eta}([k:\Q]).$$
For the second point we mimic the proof Lemma \ref{lem:BiluZeta} (2). For any constant $M>2$ 
\begin{align*}\log |\zeta_k(\sigma+it)|\leq \log |\zeta_k(\sigma)|\leq& 2\sum_{\frak p}\frac{1}{\N(\frak p)^\sigma}\leq 2\sum_{\N(\frak p)\leq M}\frac{1}{N(\frak p)^\sigma}+2[k:\Q]\sum_{p> M} \frac{1}{p^\sigma}\\
\leq &2C_1(M)\kappa_1(\eta)[k:\Q]+ o_{C,M,\eta}([k:\Q])+ \frac{2M^{1-\sigma}}{\sigma-1}[k:\Q].
 \end{align*}
For the second inequality we used (1), with $C_1(M)$ being the implicit constant, and a crude inequality  $\sum_{p> M}\frac{1}{p^\sigma}\leq \int_{M}^\infty \frac{dx}{x^\sigma}=\frac{M^{1-\sigma}}{\sigma-1}.$ There is a function $M_0:\R_{>0}\to\R_{>0}$ with $M_0(\eta)\to\infty$ as $\eta\to 0$ such that $C_1(M_0(\eta))k_1(\eta)\to 0$ as $\eta\to 0$. Put $\kappa_{2,\sigma}(\eta):=2C_1(M_0(\eta))\kappa_1(\eta)+2M_0(\eta)^{1-\sigma}(\sigma-1)^{-1}$. Choosing $M=M_0(\eta)$ we get $$\log |\zeta_{k}(s)|\leq \kappa_{2,\sigma}(\eta)[k:\Q]+o_{C,\eta}([k:\Q]).$$
\end{proof}
We are ready to estimate the adelic volumes of centralizers of elements parametrized by $\mathcal C(k,R)$. 
\begin{lemma}\label{lem:TorusVolumeUB}
There exists a function $\kappa_4:\R_{>0}\to \R_{>0}$, such that $\kappa_4(t)\to 0$ as $t\to 0$, with the following property. Let $\gamma\in \mathbf G(k)$ such that $\Ad \gamma$ has an eigenvalue $\lambda\in \mathcal C(k,R)$. Then
$$\Vol(\mathbf G_\gamma(k)\bs \mathbf G_\gamma(\mathbb A))\ll_{\eta} \frac{\Delta_k^{1/2+\kappa_4(\eta)+o_{C,\eta}(1)}}{(2\pi)^{[k:\Q]}}.$$ 
\end{lemma}
\begin{proof}
Let $l=k(\gamma)\simeq k(\lambda)$. Let $\chi$ be the quadratic character corresponding to $l/k$. By Lemma \ref{lem:TorusVolume} 
$$\Vol(\mathbf G_\gamma(k)\bs \mathbf G_\gamma(\mathbb A))\ll \frac{\Delta_k^{1/2} \N_{k/\Q}(\Delta_{l/k})^{1/2}\rho_{\chi}}{(2\pi)^{[k:\Q]}}.$$
Note that $\rho_\chi=L(1,\chi)$. Lemma will follow once we show $\N_{k/\Q}(\Delta_{l/k})^{1/2}\rho_{\chi}\ll_{\eta} \Delta_{k}^{\kappa_3(\eta)+o_{C,\eta}(1)}.$ In fact, it will be more natural to bound the whole numerator since it ``almost'' appears as a value of the completed L-function attached to $\chi$. We will follow the standard proof of Brauer-Siegel theorem (c.f. \cite[XVI,\S 1]{LangANT}). Let $\Lambda(s,\chi)$ be the completed L-function (c.f. \cite[p. 299]{LangANT}), given by 
$$\Lambda(s,\chi):=(2^{-r_2}\pi^{-[k:\Q]/2}\Delta_k^{1/2}\N_{k/\Q}(\Delta_{l/k})^{1/2})^s\prod_{\nu\in\Sigma_\infty} \Gamma(s_\nu/2)L(s,\chi),$$ where 
$s_\nu=s_\nu(\chi):=N_\nu(s+i\varphi_\nu(\chi))+|m_\nu(\chi)|$ are defined as in \cite{LangANT}. Specializing to our case $\varphi_\nu(\chi)=0$ for all $\nu\in \Sigma_\infty$, $m_\nu(\chi)=1,N_v=1$ for $\nu\in \Ram_\infty A$ and $m_{\nu_1}(\chi)=0, N_{\nu_1}=[\K:\R]$ for the unique Archimedean place $\nu_1\not\in \Ram_\infty A$. We get 
$$\Lambda(s,\chi):=(2^{-r_2}\pi^{-[k:\Q]/2}\Delta_k^{1/2}\N_{k/\Q}(\Delta_{l/k})^{1/2})^s \Gamma([\K:\R]s/2)\Gamma(\frac{s+1}{2})^{[k:\Q]-[\K:\R]}L(s,\chi).$$
The completed L-function satisfies the functional equation $$W(\chi)\Lambda(s,\chi)=\Lambda(1-s,\chi),$$ with $|W(\chi)|=1$ (c.f. \cite[Corollary 2 XIV,\S 8]{LangANT}). 
The function $\Lambda(s,\chi)$ is entire so by Phragm\'en-Lindel\"of principle $|\Lambda(1,\chi)|\leq \sup_{t\in\R}|\Lambda(\sigma+it)|$ for any $\sigma>1$. In the sequel we assume $1<\sigma<2$. 
We have $|L(s,\sigma+it)|\leq \zeta_k(\sigma)$ so 
\begin{align*}|\Lambda(1,s)|\leq & (2^{-r_2}\pi^{-[k:\Q]/2}\Delta_k^{1/2}\N_{k/\Q}(\Delta_{l/k})^{1/2})^\sigma \Gamma([\K:\R]\sigma/2)\Gamma\left(\frac{\sigma+1}{2}\right)^{[k:\Q]-[\K:\R]}\zeta_k(\sigma)\\
 \ll& (\pi^{-[k:\Q]/2}\Delta_k^{1/2}\N_{k/\Q}(\Delta_{l/k})^{1/2})^\sigma \Gamma\left(\frac{\sigma+1}{2}\right)^{[k:\Q]}\zeta_k(\sigma).
\end{align*}
Therefore, by Lemmas \ref{lem:WDiscBound} and \ref{lem:SmallPrimes} the last expression can be bounded by 
\begin{align*}|\Lambda(1,\chi)|\ll & (\pi^{-[k:\Q]/2}\Delta_k^{1/2})^\sigma \Gamma\left(\frac{\sigma+1}{2}\right)^{[k:\Q]}\exp(-\sigma\eta^{1/2}\log \eta [k:\Q]+ \kappa_{2,\sigma}(\eta)[k:\Q]+o_{C,\eta}([k:\Q]))\\
 \leq& \pi^{-[k:\Q]/2}\Delta_k^{\sigma/2}\exp(-\sigma\eta^{1/2}\log \eta [k:\Q]+ \kappa_{2,\sigma}(\eta)[k:\Q]+o_{C,\eta}([k:\Q]))\\
\end{align*}
By Minkowski's lower bound $\log |\Delta_k|\geq C_3[k:\Q]$, for some  $C_3>0$. We get 
$$ |\Lambda(1,\chi)|\ll \Delta_k^{1/2}\pi^{-[k:\Q]/2}\exp\left(\log|\Delta_k|\left( \frac{\sigma-1}{2}-C_3^{-1}\sigma\eta^{1/2}\log \eta+ C_3^{-1}\kappa_{2,\sigma}(\eta)+o_{C,\eta}(1)\right)\right).$$

Since $\lim_{t\to 0}\kappa_{2,\sigma}(t)=0$, there exists a function $\sigma_0\colon \R_{>0}\to \R_{>1}$ with $\lim_{t\to 0}\sigma_0(t)=1$ such that $\lim_{t\to 0}\kappa_{2,\sigma_0(t)}(t)=0$. Put $\kappa_4(t):=\frac{\sigma_0(t)-1}{2}-C_3^{-1}\sigma_0(t)t^{1/2}\log t+ C_3^{-1}\kappa_{2,\sigma_0(t)}(t).$ Then 
$$ |\Lambda(1,\chi)|\ll \Delta_k^{1/2}\pi^{-[k:\Q]/2}\exp\left(\log|\Delta_k|(\kappa_4(\eta)+o_{C,\eta}(1))\right)$$ and $\kappa_4(t)\to 0$ as $t\to 0$.
Since $\Lambda(1,\chi)=2^{-r_2}\pi^{-[k:\Q]/2}\Gamma([\K:\R]/2)\Delta_{k}^{1/2} \N_{k/\Q}(\Delta_{l/k})^{1/2}\rho_{\chi}$ we deduce that the numerator of the volume formula satisfies $\Delta_k^{1/2}\N_{k/\Q}(\Delta_{l/k})^{1/2}\rho_{\chi}\ll \Delta_{k}^{\frac{1}{2}+ \kappa_4(\eta)+o_{C,\eta}(1)}.$ Lemma is proved.
\end{proof} 
Lastly, we give an upper bound on $|\cl(k)|$ in the case when $\mathcal C(k,R)\neq\emptyset$. 
\begin{lemma}\label{lem:ClassGpUB}
Suppose that $\mathcal C(k,R)\neq \emptyset$. Then $|\cl(k)|\leq e^{-1.43[k:\Q]}\Delta_{k}^{\frac{1}{2}+\kappa_5(\eta)+o_{\eta,C}(1)}$ with $\kappa_5:\R_{>0}\to\R_{>0}$ satisfying $\lim_{t\to 0}\kappa_5(t)=0$. 
\end{lemma}
\begin{proof}
By the class number formula the problem reduces to an upper bound on the residue $\rho_k$ and a lower bound on the regulator $R_k$. The argument is completely parallel to the proof of lemma \ref{lem:TorusVolumeUB}. By Phragmen-Lindel\"of principle (c.f. \cite[XVI,\S 1]{LangANT}) for any $\sigma>1$ we have
$$\rho_k \leq \Delta_k^{\frac{\sigma-1}{2}}\left(2\pi(2\pi)^{-\sigma}\Gamma(\sigma)\right)^{r_2}\left( \pi^{-\sigma/2}\Gamma(\sigma/2)\right)^{r_1}\sigma(\sigma-1)\zeta_k(\sigma).$$
Hence, by Lemma \ref{lem:SmallPrimes}, for $\sigma-1$ small
$$\rho_k\leq \Delta_k^{\frac{\sigma-1}{2}}\exp(O([k:\Q](\sigma-1))+\kappa_{2,\sigma}(\eta)[k:\Q]+o_{\eta,C}([k:\Q])).$$ Using Minkowski's lower bound $\log|\Delta_k|\geq C_3[k:\Q]$ we get 
$$\rho_k\leq \Delta_k^{\frac{\sigma-1}{2} +C_4(\sigma-1)+C_4\kappa_{2,\sigma}(\eta)+o_{\eta,C}(1)},$$ for some positive constant $C_4>0$.
We choose $\kappa_5(t)$ in the same way as $\kappa_4$ the proof of lemma \ref{lem:TorusVolumeUB} to get 
$$\rho_k \leq \Delta_k^{\kappa_5(\eta)+o_{C,\eta}(1)}.$$
By \cite[p.620 Corollary]{Friedman89} (see also \cite{Zimmert}) we have a lower bound on the regulator $R_k/w_k\gg \exp(0.241[k:\Q]+0.497 r_1)$. Since all places of $k$ except possibly $1$ are real, we deduce $R_k/w_k\gg \exp(0.738[k:\Q])$. Hence, $$|\cl(k)|=\frac{\Delta_k^{1/2}w_k\rho_k}{2^{r_1}(2\pi)^{r_2}R_k}\ll \Delta_k^{1/2+\kappa_5(\eta)+o_{\eta,C}(1)}\exp(-1.43[k:\Q]).$$
\end{proof}

 \section{Proof of the main estimate}\label{sec:ProofMT}
 Theorem \ref{mthm:TraceEstimate} asserts that for  $\eta>0$ small enough, for every arithmetic lattice $\Gamma\subset G=\PGL(2,\K)$ with invariant trace field $k$, $f\in C_c(G), \|f\|_\infty\leq 1$ and $\supp f\subset \B(R), R=C+\eta[k:\Q]$ (with $\B(R)$ defined by (\ref{eq:BallDefinition})) we have 
 \begin{equation}\label{eq:MainEstP1}
 \left|\sum_{\substack{[\gamma]\subset \Gamma_V\\ \gamma \textrm{ non-torsion}}}\Vol(\Gamma_\gamma\bs G_\gamma)\O_\gamma(f)\right|\ll_{C} \begin{cases}\Vol(\Gamma\bs G)^{11/12}\Delta_k^{-4/9}&\textrm{ if }\Gamma \textrm{ is congruence arithmetic.}\\
                                                                                                                                \Vol(\Gamma\bs G)\Delta_k^{-4/9} & \textrm{ if }\Gamma \textrm{ is just arithmetic.}
                                                                                                                               \end{cases}\end{equation}
This section is devoted to the proof of Theorem \ref{mthm:TraceEstimate}. In Section \ref{ssec:UBMT} we give an upper bound on left hand side of \ref{eq:MainEstP1}. Next, in Section \ref{ssec:LBVC} we compare it with a lower bound on volume and conclude that (\ref{eq:MainEstP1}) indeed holds. 
 \subsection{Upper bound on the trace}\label{ssec:UBMT}
It is enough to work under the assumption that $f\geq 0$ since the LHS of \ref{eq:MainEstP1} for $f$ is obviously bounded by the same expression for $|f|$. Let $k,\mathbf G$ be as in Section \ref{sec:CongruenceLattices}. Let $U$ be a maximal open compact subgroup of $\mathbf G(\mathbb A_f)$. Write $U=\prod_{\frak p}U_\frak p$ where $U_\frak p$ are maximal compact subgroups of $\mathbf G(k_\frak p)$. Let $S$ be the set of primes where $U_\frak p$ is conjugate to $U_\frak p^1$ (see  (\ref{eq:MaxCompDefs})). Let $V$ be an open subgroup of $U$. Finally let $f\in C_c(G)$ be such that $\|f\|_\infty \leq 1,f\geq 0$ and $\supp f\subset \B(R)$ where $R=C+\eta[k:\Q]$. By Lemma \ref{lem:TFTraceFormula} we have 
\begin{equation}\label{eq:MainEstUB1}
I_V:=\sum_{\substack{[\gamma]\subset \Gamma_V\\ \gamma \textrm{ non-torsion}}}\Vol(\Gamma_{V,\gamma}\bs G_\gamma)\O_\gamma(f)\leq \frac{2}{|\cl(V)|}\sum_{\substack{[\gamma]\subset \mathbf G(k)\\ \gamma \textrm{ non-torsion}}}\Vol(\mathbf G_\gamma(k)\bs \mathbf G_\gamma(\mathbb A))\O_\gamma(f_\mathbb A),\end{equation} where $f_\mathbb A:=f\times \mathbf 1_{\SO(3)}^{s-1}\times [U:V]\mathbf 1_{V}.$
Due to the product structure of $f_\mathbb A$ we have $\O_\gamma(f_\mathbb A)=[U:V]\O_\gamma(f)\O_\gamma(\mathbf 1_V).$ We have $|\O_\gamma(f)|\leq \O_\gamma(\mathbf 1_{\B(R)})$ so by Lemma \ref{lem:OIBound} the orbital integral $\O_\gamma(f)$ vanishes unless $\m(\gamma)\leq R$. We parametrize the set of conjugacy classes that can bring non-zero contribution to the right hand side by looking at their eigenvalues. Define ${\rm eig}([\gamma]):=\{\lambda,\lambda^{-1}\}$ where $\lambda$ is a non-trivial eigenvalue of $\Ad \gamma$. In Section \ref{sec:ConjClasses} we explained how $\lambda,\lambda^{-1}$ determine the $\mathbf G(k)$-conjugacy class of any non-torsion element $\gamma$. By Lemma \ref{lem:TranslationWeil} the condition $\m(\gamma)\leq R$ forces ${\rm eig}([\gamma])\subset \mathcal C(k,R)$ (c.f. Definition \ref{def:CkR}). 

Either $\mathcal C(k,R)=\emptyset$ and then the right hand side of (\ref{eq:MainEstUB1}) vanishes or $\mathcal C(k,R)\neq \emptyset$. Clearly, in order to bound $I_V$ (defined in (\ref{eq:MainEstUB1})) it's enough to consider only the second case. 
\begin{align*}
 \sum_{\substack{[\gamma]\subset \mathbf G(k)\\ \gamma \textrm{ non-torsion}}}\Vol(\mathbf G_\gamma(k)\bs \mathbf G_\gamma(\mathbb A))\O_\gamma(f_\mathbb A)=&\frac{1}{2} \sum_{\lambda\in \mathcal C(k,R)}\sum_{{\rm eig}([\gamma])\ni \lambda}\Vol(\mathbf G_\gamma(k)\bs \mathbf G_\gamma(\mathbb A))[U:V]\O_\gamma(f)\O_\gamma(\mathbf 1_V).
\end{align*}
For each $\lambda\in \mathcal C(k,R)$ there is at most one conjugacy class $[\gamma]$ with $\lambda\in {\rm eig}([\gamma]).$ By Lemma \ref{lem:TorusVolumeUB} and Proposition \ref{prop:NonArchBound}, for any $m\geq 6$ we can bound the right hand side by 
$$\ll \sum_{\lambda\in \mathcal C(k,R)}\frac{\Delta_k^{\frac{1}{2}+\kappa_4(\eta)+o_{C,\eta}(1)}}{(2\pi)^{[k:\Q]}}\O_{\gamma}(f)|\N_{k/\Q}(\Delta(\gamma))|^{7}J_1[U:V]^{1-1/2m} J_2^{1/2m}J_3^{1/2m}.$$ 
The function $\kappa_4\colon \R_{>0}\to\R_{>0}$ satisfies $\lim_{t\to 0}\kappa_4(t)=0$ and  the constants $J_1,J_2,J_2$ (c.f. Proposition \ref{prop:NonArchBound}) are defined as \begin{align*}J_1=&2^{3\pi_k(65)+|\Ram_f A|/2+|S|/2},\\ J_2=&2^{2[k:\Q]+|S|+|\Ram_f A|}|\cl(k)/\cl(k)^2|\frac{|\cl(V)|}{|\cl(U)|},\\ J_3=&\zeta_k(2)^{24} 162^{|S|}2^{|\Ram_f A|}\prod_{\frak p\in S\cup\Ram_f A}(N(\frak p)+1)^3.\end{align*} We use Lemma \ref{lem:WDiscBound} to bound $|\N_{k/\Q}(\Delta(\gamma))|$ and Lemma \ref{lem:AOIBound} to bound $|\O_{\gamma}(f)|$. We get 
\begin{align*}&\sum_{\lambda\in \mathcal C(k,R)}\frac{\Delta_k^{\frac{1}{2}+\kappa_4(\eta)+o_{C,\eta}(1)}}{(2\pi)^{[k:\Q]}}\exp(O(-\eta^{1/2}\log \eta [k:\Q])+ o_{C,\eta}([k:\Q]))J_1[U:V]^{1-1/2m} J_2^{1/2m}J_3^{1/2m},\\ 
\end{align*} since $\eta=O(-\eta^{-1/2}\log \eta)$ for $\eta$ small. Note that the summands no longer depend on $\gamma$. By Minkowski's lower bound $\log \Delta_k\geq C_3 [k:\Q], C_3>0$, so we can bound the last expression by  
\begin{align*}&\frac{|\mathcal C(k,R)|}{(2\pi)^{[k:\Q]}}\Delta_k^{\frac{1}{2}+\kappa_4(\eta)-C_4\eta^{1/2}\log \eta+ o_{C,\eta}(1)}J_1[U:V]^{1-1/2m} J_2^{1/2m}J_3^{1/2m},
\end{align*} for some $C_4>0$. Using the upper bound on $|\mathcal C(k,R)|$ from Lemma \ref{lem:ClassCount} and Minkowski's lower bound 
we get $|\mathcal C(k,R)|\leq \Delta_k^{C_3^{-1}\kappa_3(\eta)+o_{C,\eta}(1)}.$ Therefore 
$$I_V\leq  |\cl(V)|^{-1}(2\pi)^{-[k:\Q]}\Delta_k^{\frac{1}{2}+ \kappa_6(\eta)+o_{C,\eta}(1)} [U:V]^{1-1/2m} J_1J_2^{1/2m}J_3^{1/2m},$$
where $\kappa_6(t):=\kappa_4(\eta)-C_4\eta^{1/2}\log \eta+ C_3^{-1}\kappa_3(\eta).$ Note that $\kappa_6(\eta)\to 0$ as $\eta\to 0$. 
 Since $\mathcal C(k,R)\neq \emptyset$, we can use Lemma \ref{lem:SmallPrimes} to bound $J_1$
\begin{align*}J_1=&2^{3\pi_k(65)+|\Ram_f A|/2+|S|/2}\leq 2^{|\Ram_f A|/2+|S|/2}\exp(O(\kappa_1(\eta)[k:\Q])+o_{C,\eta}([k:\Q]))\\ \leq& 2^{|\Ram_f A|/2+|S|/2}\Delta_k^{C_5\kappa_1(\eta)+o_{C,\eta}(1)},\end{align*} for some $C_5>0$.
We have $e^{1.43}>4$ so Lemma \ref{lem:ClassGpUB} gives $|\cl(k)|\leq \Delta_k^{1/2+\kappa_5(\eta)+o_{\eta,C}(1)}4^{-[k:\Q]}$. Hence 
$$J_2\ll 2^{|\Ram_f A|+|S|}\Delta_{k}^{1/2+\kappa_5(\eta)+o_{\eta,C}(1)}\frac{|\cl(V)|}{|\cl(U)|}.$$ By Lemma \ref{lem:SmallPrimes} and Minkowski's lower bound
$$J_3\ll \Delta_k^{C_3^{-1} 24\kappa_{2,2}(\eta)+o_{\eta,C}(1)}162^{|S|}2^{|\Ram_f A|}\prod_{\frak p\in \Ram_f A\cup S}(\N(\frak p)+1)^{3}.$$
We multiply all the bounds above. After some algebraic manipulations  we arrive at:
\begin{equation}\label{eq:TraceUBFinal} I_V\ll \left(\frac{[U:V]}{|\cl(V)|}\right)^{1-\frac{1}{2m}}|\cl(U)|^{-\frac{1}{2m}}\frac{\Delta_k^{\frac{1}{2}+\frac{1}{4m}+\kappa_{7,m}(\eta)+o_{C,\eta}(1)}}{(2\pi)^{[k:\Q]}}\prod_{\frak p\in \Ram_f A\cup S}2^{\frac{1}{2}+\frac{5}{m}}(\N(\frak p)+1)^{\frac{3}{2m}}\end{equation} 
where $\kappa_{7,m}(\eta):=\kappa_6(\eta)+\frac{\kappa_5(\eta)}{2m}+C_5\kappa_1(\eta)+C_3^{-1}\frac{24}{2m}\kappa_{2,2}(\eta).$ Note that $\kappa_{7,m}(\eta)\to 0$ as $\eta\to 0$. To shorten notation we will suppress the dependence of $\kappa_{7,m}$ on $m$ and write $\kappa_7$.
 \subsection{Lower bound on volume and the conclusion}\label{ssec:LBVC}

Let $X=\H^2$ if $\K=\R$ and $X=\H^3$ if $\K=\C$. By Lemma \ref{lem:CongVolume} 
\begin{equation}\label{eq:IneqVol1} \Vol(\Gamma_V\bs X)\gg \frac{[U:V]}{|\cl(V)|}\frac{\Delta_k^{3/2}}{(4\pi^2)^{[k:\Q]}}\prod_{\frak p\in \Ram_f A\cup  S}\frac{\N(\frak p)-1}{2}.\end{equation}
The proof of Theorem \ref{mthm:TraceEstimate} is divided into two stages. First we treat the case of a congruence subgroup $\Gamma_V$, same as in the previous section and then deduce a version for non-congruence lattices. If $\mathcal C(k,R)=\emptyset$ we have $I_V=0$ so there is nothing to prove. Assume that $\mathcal C(k,R)\neq\emptyset$ and consider the product $I_V \Vol(\Gamma_V\bs X)^{\frac{1-2m}{2m}}$ for $m\geq 6$. 

By (\ref{eq:TraceUBFinal}) and (\ref{eq:IneqVol1}) \begin{align*}I_V\Vol(\Gamma_V\bs X)^{\frac{1-2m}{2m}}\ll& |\cl(U)|^{-\frac{1}{2m}} \Delta_k^{\frac{1-m}{m}+\kappa_7(\eta)+o_{\eta,C}(1)}(2\pi)^{^{\frac{m-1}{m}}[k:\Q]}\prod_{\frak p\in \Ram_f A\cup S}\frac{2^{\frac{3m+9}{2m}}(\N(\frak p)+1)^{\frac{3}{2m}}}{(\N(\frak p)-1)^{\frac{2m-1}{2m}}}.\end{align*}
Observe that for every $m\geq 6$ we have 
\begin{equation*}2^{\frac{3m+9}{2m}}\frac{(\N(\frak p)+1)^{\frac{3}{2m}}}{(\N(\frak p)-1)^{\frac{2m-1}{2m}}}\leq 2^{9/4}(\N(\frak p)-1)^{-1/2}\frac{(\N(\frak p)+1)^{3/2m}}{(\N(\frak p)-1)^{5/2m}}\leq \begin{cases}
                                                                                                                 1 & \textrm{ if } \N(\frak p)\geq 24,\\
                                                                                                                 7 & \textrm{ otherwise.}
                                                                                                                \end{cases}
\end{equation*}
Therefore, by Lemma \ref{lem:SmallPrimes} \begin{align*}I_V\Vol(\Gamma_V\bs X)^{\frac{1-2m}{2m}}\ll& \Delta_k^{\frac{1-m}{m}+\kappa_7(\eta)+o_{\eta,C}(1)}(2\pi)^{\frac{m-1}{m}[k:\Q]}7^{\pi_k(23)}\\ \ll& \Delta_k^{\frac{1-m}{m}+\kappa_8(\eta)+o_{\eta,C}(1)}(2\pi)^{\frac{m-1}{m}[k:\Q]},\end{align*} with $\kappa_8(\eta)\to 0$ as $\eta\to 0$.

We specialize $m=6$ (lowest possible parameter, due to the assumptions of Proposition \ref{prop:NonArchBound}). By Odlyzko's lower bound \cite{Odlyzko} $\Delta_k\gg 60^{[k:\Q]}$, so $\Delta_k^{0.449}\gg (2\pi)^{[k:\Q]}.$ We infer that
\begin{equation} I_V\Vol(X\bs \Gamma_V)^{-\frac{11}{12}}\ll ((2\pi)^{[k:\Q]} \Delta_k^{-0.449})^{\frac{5}{6}}\Delta_k^{-0.551\cdot \frac{5}{6}+\kappa_8(\eta)+o_{\eta,C}(1)}\ll \Delta_k^{-0.459+\kappa_8(\eta)+o_{\eta,C}(1)}.\end{equation}
If $\eta$ is small enough, then $-0.459+\kappa_8(\eta)<-0.45$. For such $\eta$ we have 
$$ I_V\Vol(X\bs \Gamma_V)^{-\frac{11}{12}}\ll \Delta_k^{-4/9}.$$
This proves Theorem \ref{mthm:TraceEstimate} for the congruence arithmetic lattices. 

Now, let $\Gamma$ be any arithmetic lattice in $G:=\PGL(2,\K)$ and let $f\in C_c(G)$ satisfy $\|f\|_\infty\leq 1, f\geq 0, \supp f\subset \B(C+\eta[k:\Q])$. Since every maximal arithmetic lattice is a congruence lattice, we can find a congruence lattice $\Gamma_V$ such that, up to conjugacy, $\Gamma\subset \Gamma_V$. 
Formal calculation shows that 
$$\Vol(\Gamma\bs X)^{-1}I_\Gamma:=\Vol(\Gamma\bs X)^{-1}\sum_{\substack{[\gamma]\subset \Gamma \\ \textrm{non-torsion}}}\Vol(\Gamma_\gamma\bs G_\gamma))\O_\gamma(f)\leq \Vol(\Gamma_V\bs X)^{-1} I_V,$$ where $I_V$ is as before. Hence, $\Vol(\Gamma\bs X)^{-1} I_\Gamma\leq \Vol(\Gamma_V\bs X)^{-1}I_V\ll \Vol(\Gamma_V\bs X)^{-\frac{11}{12}}I_V\ll \Delta_k^{-4/9}.$ In the second inequality we used a well known fact that the volume of arithmetic quotient $\Gamma_V\bs X$ is bounded away from $0$. This proves Theorem \ref{mthm:TraceEstimate} for non-congruence arithmetic lattices.

\section{Benjamini-Schramm Convergence}\label{sec:BSConvergence}
The aim of this section is to prove Theorem \ref{mthm:BSconv}. Let $\Gamma$ be a torsion-free arithmetic lattice in $G=\PGL(2,\K)$. The case of non-uniform arithmetic lattices\footnote{If $\K=\R$ they are defined over $\Q$ and if $\K=\C$ the are defined over a quadratic imaginary number field.} was treated in \cite[Theorem A]{Raim13} so we may assume that $\Gamma$ is a co-compact lattice. Let $\B(R)$ be defined by (\ref{eq:BallDefinition}) and let $K\subset G$ be the unique maximal compact subgroup of $G$ stabilizing $\B(R)$ from both sides. Write $X=G/K$ ($X=\H^2$ if $\K=\R$, $X=\H^3$ if $\K=\C$). 
\begin{proof}[Theorem \ref{mthm:BSconv}.] Let us start with the congruence case $\Gamma=\Gamma_V$. We need to show that for $R=C+\eta[k:\Q]$, with $\eta>0$ sufficiently small, we have 
\begin{equation}
\Vol((\Gamma\backslash X)_{<R})\ll_C \Vol(\Gamma\backslash X)^{11/12}\Delta_k^{-4/9}
\end{equation}
To this end, we apply Theorem \ref{mthm:TraceEstimate} to the lattice $\Gamma$ and $f=\mathbf 1_{\B(R)}$ - the characteristic function of an $R$-ball, defined as in Section \ref{sec:Archimedean}. This function is not continuous but it can be approximated from above by continuous compactly supported functions so the estimate from Theorem \ref{mthm:TraceEstimate} still applies. We have 
\begin{equation}
|\tr R_\Gamma f-\Vol(\Gamma\backslash G)|\ll_C\Vol(\Gamma\backslash G)^{11/12}\Delta_k^{-4/9}.
\end{equation}
Unfolding the proof of Selberg's trace formula for compact quotients (see e.g. \cite[p. 9, second equality]{Clay03}) gives
\begin{align}
\tr R_\Gamma f=&\int_{\Gamma\backslash G}\sum_{\gamma\in \Gamma}f(x^{-1}\gamma x)dx\\
=& \int_{\Gamma\bs G}|\{\B(R)\cap x^{-1}\Gamma x\}|dx.\\
=& \Vol(\Gamma\backslash X)+\int_{\Gamma\backslash X}\left(|\{\B(R)\cap x^{-1}\Gamma x\}|-1\right)dx
\end{align}
The set of points $x\in \Gamma\bs X$ whose injectivity radius is smaller than $R$ can be described as $\{\Gamma x\in \Gamma\bs X\mid |\B(R)\cap x^{-1}\Gamma x|\geq 2\}$. Hence
\begin{equation}
\Vol((\Gamma\backslash X)_{<R})\leq \tr R_\Gamma f-\Vol(\Gamma\backslash G)
\end{equation}
By Theorem \ref{mthm:TraceEstimate}  $\Vol((\Gamma\backslash X)_{<R})\ll_{C}\Vol(\Gamma\backslash X)^{11/12}\Delta_k^{-4/9}$ This ends the proof in the congruence case. In the general case we choose a congruence arithmetic lattice $\Gamma_V$ containing $\Gamma$.  We have 
$$\Vol((\Gamma\bs X)_{\leq R})\leq \sum_{\substack{[\gamma]\subset \Gamma\\ \textrm{ non-torsion }}}\Vol(\Gamma_\gamma\bs G_\gamma)\O_\gamma(f)\leq \frac{I_V}{[\Gamma_V:\Gamma]}\leq \Vol(\Gamma\bs X)\Delta_k^{-4/9},$$ with $I_V$ defined in (\ref{eq:MainEstUB1}). Theorem \ref{mthm:BSconv} is proved.

\end{proof}
\begin{remark} Same argument shows that in any semisimple Lie group $G$ if $(\Gamma_n)_{n\in\mathbb N}$ has the limit multiplicity property, then $(\Gamma_n\bs X)_{n\in\mathbb N}$ converges Benjamini-Schramm to $X$. 
\end{remark}
In a subsequent joint work with Jean Raimbault \cite[Theorem A]{FraRaim} we managed to lift the torsion-free assumption. Since \cite{FraRaim} refers to the old version of this paper, we indicate how \cite[Theorem A]{FraRaim} can be shown using Theorem \ref{mthm:TraceEstimate}. Let $(\Gamma_n)_{n\in\NN}$ be a sequence of non-conjugate arithmetic lattices that are either pairwise non-commensurable or are all congruence. In the first case let us replace $(\Gamma_n)_{n\in \NN}$ by a sequence of maximal lattices $\Gamma_n'\supset \Gamma_n$. We can do that because Benjamini-Schramm convergence for $(\Gamma_n'\bs X)_{n\to\NN}\to X$ will imply the convergence of $(\Gamma_n\bs X)_{n\to\NN}\to X$. These lattices are always congruence, and by assumption they are pairwise non-commensurable . The problem is reduced to the case of a sequence of pairwise non-conjugate congruence arithmetic lattices. Theorem \ref{mthm:TraceEstimate} immediately implies that \cite[Theorem 2.5]{FraRaim} holds for such sequences. We combine this with \cite[Prop. 2.4]{FraRaim} to get \cite[Theorem A]{FraRaim}.

\section{Applications}
 \subsection{Gelander conjecture}\label{sec:Gelander}
This section is devoted to the proof of Theorem \ref{mt.Gelander} and Corollaries \ref{mc.Gelander},\ref{cor:TorsionBd}.
\begin{proof}[Theorem \ref{mt.Gelander}]
Let $\Gamma\subset \PGL(2,\C)$ be a torsion-free arithmetic lattice with the invariant trace field $k$. Put $M=\Gamma\backslash \mathbb H^3$. Gelander already proved the conjecture for non-uniform arithmetic lattices so we shall assume that $\Gamma$ is uniform\footnote{ The key feature used for non-uniform lattices is that they are all defined over a quadratic imaginary field. This implies a uniform lower bound on the lengths of closed geodesics on the quotients.} . Following the method from \cite{Gelander1} we will construct a simplicial complex $\mathcal N$ homotopic to $M$ as the nerve of a covering of $M$ by certain balls. We are able to bound the number of simplices in $\mathcal N$ because both the size of the thin part of the manifold and its injectivity radius can be controlled by the degree $[k:\Q]$. Let $\varepsilon$ be the Margulis constant for $\mathbb H^3$.

  Let ${\rm V}(R)$ be the volume of an $R$-ball in $\H^3$. For $x\in \Gamma\backslash  \mathbb H^3$ or $x\in \mathbb H^3$ we will write $\B(x,R)$ for the closed ball of radius $R$ centered in $x$. Define  $i(x)=\min\{\injrad x, 1\}$ for $x\in M$. 
Let $\mathcal B$ be a maximal with respect to inclusion set of points in $M$ satisfying the following conditions: For any distinct $x,y\in \mathcal B$ we have $\B(x,i(x)/16)\cap \B(y,i(y)/16)=\emptyset$. 
   
\textbf{ Claim 1.} $$\bigcup_{x\in\mathcal B}\B(x,i(x)/5)=M.$$
\begin{proof}
Let $y\in M$. By maximality of $\mathcal B$ there exists $x\in \mathcal B$ such that $\B(x,i(x)/16)\cap \B(y,i(y)/16)\neq \emptyset$. Hence $d(x,y)<\frac{i(x)+i(y)}{16}$.
If $i(x)\geq i(y)$, then $d(x,y)<i(x)/8$ so $y\in \B(x,i(x)/5)$. Otherwise 
$$i(y)-i(x)\leq d(x,y)< \frac{i(x)+i(y)}{16},$$ because $i(x)$ is $1$-Lipschitz. In particular, $i(y)<\frac{17}{15}i(x)$. It follows that $$d(x,y)<\frac{i(x)+i(y)}{16}<\frac{2}{15}i(x),$$ so $y\in \B(x,i(x)/5)$.
\end{proof}
\textbf{Claim 2.} For every $y\in \mathcal B$ the number of $x\in\mathcal B$ such that $\B(x,i(x)/5)\cap \B(y,i(y)/5)\neq \emptyset$ is at most $3104$.
\begin{proof}
Let $S_y:=\{x\in \mathcal B|\, \B(x,i(x)/5)\cap \B(y,i(y)/5)\neq \emptyset\}$. Let $x\in S_y$. Note that $$i(x)\leq i(y)+d(x,y)<i(y)+\frac{i(x)+i(y)}{5},$$ so $i(x)<\frac{3}{2}i(y)$ and $d(x,y)< i(y)/2$. Similarly $i(y)<\frac{3}{2}i(x)$ and $d(x,y)<i(x)/2$. Hence $\B(x,i(x)/16)\subset \B(y,i(x)/16+d(x,y))\subset \B(y, 19/32 i(y))$. The balls $\B(x,i(x)/16), x\in S_y$ are pairwise disjoint, each of volume ${\rm V}(i(x)/16)>{\rm V}(1/24 i(y))$. By comparing the volumes of ${\rm V}(1/24 i(y))$ and ${\rm V}(19/32 i(y))$ we get
$$|S_y|\leq \frac{{\rm V}(19/32 i(y))}{{\rm V}(1/24 i(y))}\leq \frac{{\rm V}(19/32)}{{\rm V}(1/24)}\approx 3103.573<3104.$$
The inequalities follow from the formula for the volume of a ball in hyperbolic $3$-space ${\rm V}(R)=\pi(\sinh 2R-2R)$ \cite[p.83 Ex 3.4.5]{Ratcliffe}. 
\end{proof}

Let $\mathcal U$ be the open cover $\bigcup_{x\in\mathcal B} \B(x,i(x)/5)=M$. By the first claim it is indeed a cover of $M$. Any nonempty intersection of sets in $\mathcal U$ is a convex set so it is contractible. It follows that the cover $\mathcal U$ is "good" in the terminology of \cite{BotT}. By \cite[Theorem 13.4]{BotT} the nerve $\mathcal N$ of $\mathcal U$ is homotopy equivalent to $M$. By definition, the vertices in $\mathcal N$ correspond to the open sets in $\mathcal U$ and $k$-simplices correspond to unordered $k$-tuples in $\mathcal{U}$ with nonempty intersection. Using the second claim we deduce that the degree of vertices in $\mathcal N$ is bounded by $3104$. 

It remains to bound $|\mathcal B|$, which is the number of vertices in $\mathcal N$. We will bound the size separately for $\mathcal B_1:=\mathcal B\cap M_{\geq 1}$ and $\mathcal B_2:=\mathcal B\cap M_{<1}$. The union $\bigsqcup_{x\in B_1}\B(x,1/16)$ is disjoint so
$$|\mathcal B_1|\leq \frac{\Vol(M)}{{\rm V}(1/16)}\ll \Vol(M).$$

By Lemma \ref{lem:TranslationWeil} and Dobrowolski's Theorem \cite{Dobr79} we get that for $\gamma\neq 1$
$$\m(\gamma)\gg(\log[k:\Q])^{-3}.$$
We deduce that the injectivity radius of $M$ is bounded below by $C(\log[k:\Q])^{-3}<1$ for some absolute positive constant $C$. The disjoint union $\bigsqcup_{x\in B_2}\B(x,C\log[k:\Q])^{-3}/16)$ lies in $M_{<17/16}$ so 
$$|\mathcal B_2|\leq \frac{\Vol(M_{<17/16})}{{\rm V}(C\log[k:\Q])^{-3}/16)}\ll \Vol(M_{<17/16})(\log[k:\Q])^9).$$
By Theorem \ref{mthm:BSconv} and Odlyzko's lower bound \cite{Odlyzko} (Minkowski's weaker bound would also suffice) we get $|\mathcal B_2|\ll \Vol(M)60^{-[k:\Q]4/9}(\log[k:\Q])^3\ll \Vol(M)$. Hence 
$|\mathcal B|\ll \Vol(M)$. This proves that the number of vertices in $\mathcal N$ is at most linear in the volume of $M$.
\end{proof}
To prove Corollary \ref{mc.Gelander} one just has to repeat the steps of the proof of \cite[Theorem 11.2]{Gelander1}. 
The explicit presentation of $\Gamma$ from Corollary \ref{mc.Gelander} implies the following bound on the size of the torsion part of $H_1(\Gamma\bs \mathbb H^3,\Z)$:
$$\log |H_1(\Gamma\bs \mathbb H^3,\Z)_{\rm tors}|\ll \Vol(\Gamma\bs \mathbb H^3).$$ This is Corollary \ref{cor:TorsionBd}.

 \subsection{Growth of Betti numbers}\label{sec:BettiNumbers}
 \begin{proof}[Corollary \ref{mc.Betti}]Let $\Gamma$ be a co-compact lattice in $G=\PGL(2,\C)$. 
By \cite[VI 3.2, VI 4.2 VII 4.9]{BW}  $\dim_\C H^1(\Gamma\bs X,\C)= m(I_{P,\delta_1,0}, \Gamma)$, where $I_{P,\delta_1,0}$ is the irreducible representation of $G$ defined in \cite[VI,\S 3]{BW}. Write $I_1:=I_{P,\delta_1,0}.$ 
Consider a sequence of congruence arithmetic co-compact torsion-free lattices in $G$ that are either congruence or pairwise non-commensurable. By Theorem \ref{mthm:LMP} $(\Gamma_i)_{i\in\NN}$ has the limit multiplicity property. Since $I_1$ is not discrete series ($\PGL(2,\C)$ has none), we have $\lim_{n\to\infty} \frac{m(I_1,\Gamma_n)}{\Vol(\Gamma_n\bs G)}=0$. 
\end{proof}
\begin{proof}[Theorem \ref{thm:MultiUB}]
As in the proof above $\dim_{\C} H^1(\Gamma\bs X,\C)= m(I_1, \Gamma)$, so the problem reduces to an estimate on $m(I_1,\Gamma).$ Our estimate for multiplicity will only use the fact that $I_1$ in tempered but not discrete series. We will use the argument of DeGeorge and Wallach from \cite{DeWa78}. For that purpose we will need Harish-Chandra's expansion to control the decay of the matrix coefficients of $I_1$. We borrow the notation from \cite[IV]{BW}. Let $K$ be a maximal compact subgroup of $G$. Let $H$ be the Hilbert space underlying $I_1$. Let $H_0$ be the subspace of $K$-finite vectors of $H$. Let $P,M,A\subset G$ be the subgroups of upper triangular, diagonal and real positive diagonal matrices respectively. 
Let $\frak a$ be the Lie algebra of $A$, let $\Phi(P,A)$ be the set of roots of $P$ relative to $A$. Let $\rho$ be the half-sum of roots in $\Phi(P,A)$ and let $\frak a^+=\{h\in \frak a| \alpha(h)>0, \alpha\in \Phi(P,A)\}.$ 
By \cite[Theorem 1.2, 1.3 IV \S]{BW} there exists a finite set o weights $E^0(P,I_1)\subset \frak a^*_\C$ and non-zero functions $P_\Lambda\colon \frak a\times H_0\times H_0\to \C, \Lambda\in E^0(P,I_1)$ such that for every $H\in \frak a^+$ 
\begin{equation}\label{eq:MCAsymptotics}
\lim_{t\to\infty} \frac{\sum_{\Lambda\in E^0(P,I_1)}e^{\Lambda(tH)}P_\Lambda(tH; v_1,v_2)}{\langle I_1(\exp(tH)v_1,v_2\rangle}=1.
\end{equation}
The functions $P_\Lambda(H,v_1,v_2)$ are linear in $v_1$, anti-linear in $v_2$ and polynomial in $H$. 
Since $I_1$ is tempered \cite[3.2 III]{BW}, but not discrete series we must have $\Re\, \Lambda\leq -\rho$ for any $\Lambda\in E^0(P,I_1)$, with equality for at least one element. Let $d$ be the maximal degree of $P_\Lambda$ for which $\Re\,\Lambda=-\rho$. Fix a unit length vector $v_0\in H_0$ and put $\phi(g):=\langle I_1(g)v_0,v_0\rangle$. From (\ref{eq:MCAsymptotics}) we deduce that for sufficiently big $R$ we have 
\begin{align}\label{eq:MCUB2} \int_{\B(R)}|\phi(g)|^2 dg\gg& \int_{K}\int_K\int_{\frak a^+}^{\rho(H)\leq 2R}e^{-2\rho(H)}|\phi(k_1\exp(H)k_2)|^2dH dk_1dk_2 \gg R^{2d+1}.
\end{align}
For integration formula in spherical coordinates we refer to \cite[p.125 (5)]{Shalom}. 

We are ready to apply the method of DeGeorge and Wallach. Put $u:=\phi\cdot \mathbf 1_{\B(R)}$ and let $u\ast \hat u$ be the convolution of $u$ with $\hat u(g):=\overline u(g^{-1})$. By \cite[3.2]{DeWa78} we have 
$$ m(I_1, \Gamma)\leq \tr R_\Gamma (u\ast \hat u) \|u\|_2^{-4}.$$
We also observe that $\|u\ast \hat u\|_\infty=u\ast \hat u(1)=\|u\|_2^2$. Assume now that $\Gamma$ is a torsion-free arithmetic lattice with invariant trace field $k$. Let $\eta$ be as in Theorem \ref{mthm:TraceEstimate}. Put $R:=\frac{\eta}{2}[k:\Q].$ Then, $\supp (u\ast \hat u)\subset \B(\eta[k:\Q])$.  By Theorem \ref{mthm:TraceEstimate} $$\tr R_\Gamma (u\ast \hat u)\leq \left((u\ast \hat u)(1)+ O(\Delta_k^{-4/9})\|u\ast \hat u)\|_\infty\right)\Vol(\Gamma\bs G)\ll \Vol(\Gamma\bs G)\|u\|_2^2.$$ 
Hence, by (\ref{eq:MCUB2}) $m(I_1, \Gamma)\ll \Vol(\Gamma\bs G)\|u\|_2^{-2}\ll \Vol(\Gamma\bs G)[k:\Q]^{-2d-1}.$ The degree $d$ is non-negative so we get the theorem. 


\end{proof}

\begin{acknowledgement}
This began as a part of author's PhD thesis at the Universit\'e Paris-Sud. I would like to thank my supervisor Emmanuel Breuillard for suggesting this problem as well as for many useful remarks. I am grateful to Nicolas Bergeron and Erez Lapid for careful reading the first version of the manuscript. I acknowledge the support of ERC Consolidator Grant No. 648017 during the last stages of work. I am thankful to the Institute for Advanced Study for providing excellent working conditions when I wrote the current version of the manuscpript. Finally I thank anonymous referees whose valuable remarks and suggestions led to a much improved exposition and improvement of results.
\bibliographystyle{plain}
\end{acknowledgement}
\bibliography{ref}
\end{document}